\theoremstyle{plain}
\newtheorem*{theorem*}{Theorem} 
\newtheorem*{lemma*}{Lemma}
\newtheorem*{assumption*}{Assumption}
\newtheorem{theorem}{Theorem} 
\newtheorem{lemma}[theorem]{Lemma}
\newtheorem{proposition}[theorem]{Proposition}
\theoremstyle{definition}
\newtheorem{definition}[theorem]{Definition}
\newtheorem{example}[theorem]{Example}
\newtheorem{remark}[theorem]{Remark}
\theoremstyle{remark}
\numberwithin{theorem}{section}
\numberwithin{equation}{section}
\newcommand{\R}{\mathbb{R}}
\newcommand{\N}{\mathbb{N}}
\newcommand{\Z}{\mathbb{Z}} 
\newcommand{\T}{\mathbb T}
\renewcommand{\S}{\mathcal{S}}
\newcommand{\D}{\mathcal{D}}
\DeclareMathOperator{\order}{order}
\newcommand{\defN}{\mathbb N}
\newcommand{\G}{{\mathscr{G}}}
\renewcommand{\H}{{\mathscr{H}}}
\begin{document} 

\title{Euler-Like Vector Fields, Deformation Spaces and Manifolds with Filtered Structure} 

\author{Ahmad Reza Haj Saeedi Sadegh and Nigel Higson}

\date{}

\maketitle

\begin{abstract}
\noindent Let $M$ be a smooth submanifold of a smooth manifold $V$.  Bursztyn, Lima and Meinrenken   defined the concept of an Euler-like vector field  associated to the embedding of $M$ into $V$, and proved that  there is a bijection between germs of Euler-like   vector fields and germs of tubular neighborhoods of $M$. We shall present a new view of this result by characterizing Euler-like vector fields algebraically and examining their relation to the  deformation to the normal cone from algebraic geometry.  Then we shall extend our algebraic point  of view to   smooth manifolds that are equipped with Lie filtrations, and define deformations to the normal cone and  Euler-like vector fields in that context.     Our algebraic construction of the deformation to the normal cone  gives a new approach to  Connes' tangent groupoid and its generalizations to  filtered manifolds.  In addition, Euler-like vector fields   give rise to preferred coordinate systems on filtered manifolds.
\end{abstract}

\section{Introduction}

The purpose of this paper is to examine from an algebraic point of view the concepts of  Euler-like vector field  and   deformation  to the normal cone in the theory of smooth manifolds. We shall  relate the two, and then extend them  from smooth manifolds  to so-called filtered manifolds.  As an application, we shall obtain new views of Connes' tangent groupoid  and its generalizations.

 Recall that the  \emph{Euler vector field} on  a finite-dimensional real vector space $V$ is the infinitesimal generator  of   scalar multiplication. Thus if $f$ is a smooth function on $V$, then its derivative in the direction of the Euler vector field is 
\begin{equation}
\label{eq-euler}
E(f)(v) = \frac{d}{dt}\Big \vert _{t=0} f(e^tv).
\end{equation}
If  $
x_1,\dots ,x_n 
$
is any linear coordinate system on $V$ (or in other words, a basis for the dual vector space $V^*$),  
then 
\begin{equation}
\label{eq-euler-coordinates}
E = x_1 \frac{\partial\phantom{x}}{\partial x_1}  + \cdots + x_n  \frac{\partial\phantom{x}}{\partial x_n}  .
\end{equation}
The Euler vector field is also characterized by the property that  if $f$ is a smooth homogeneous    polynomial  on $V$   of degree $q$, then
\begin{equation}
\label{eq-euler-polynomials}
	E(f) = q\cdot f .
\end{equation}
The concept of Euler vector field extends easily to vector bundles: if $V$ is the total space of a smooth, real vector bundle over a smooth manifold $M$, then the \emph{Euler vector field} on $V$ is given by the  formula \eqref{eq-euler} above, or equivalently by the obvious variations of \eqref{eq-euler-coordinates} or \eqref{eq-euler-polynomials}.  On each fiber, the Euler vector field of the bundle restricts to the Euler vector field of the fiber.

We shall be concerned  with an extension of the concept of Euler-vector field to manifolds.   Recall first  that a smooth function $f$ on $V$ \emph{vanishes to order $q\ge 1$} on  a submanifold $M$ if $Df$ vanishes on $M$ for every linear differential operator $D$ on $V$ of order $q{-}1$ or less.

\begin{definition}[See {\cite[Definition 2.5]{BLM}}\footnote{In \cite{BLM} it is required that Euler-like vector fields be complete.  That is not necessary for our purposes, and does not affect the results below, which concern germs of Euler-like vector fields near $M$.  See also Remark~2.8 in \cite{BLM}.}]
If $M$ is a smooth embedded submanifold of a smooth manifold V, then an \emph{Euler-like vector field} for the embedding of $M$ into $V$ is a vector field $E$ on $V$ with the property that if $f$ is a smooth function on $V$ that vanishes on $M$ to order $q\ge 1$, then
\[
	E(f) = q\cdot f + r,
	\]
where the remainder $r$ is a smooth function that vanishes to order $q{+}1$ or higher.
\end{definition}

\begin{remark}
Actually the above condition for $q=1$ implies the conditions for all $q$.  But for later purposes  it will be convenient to phrase the definition as we did.
\end{remark}

If $V$ is the total space of a vector bundle over $M$, then the Euler vector field on $V$ is   Euler-like for the embedding of $M$ into $V$ as the zero section.  More generally, recall that a \emph{tubular neighborhood} of $M$ in $V$ is a diffeomorphism from an open neighborhood of the zero section in the total space of the normal bundle 
\[
N_VM = TV\big \vert _M \Big / TM
\]
to an open neighborhood of $M$ in $V$ such that: 
\begin{enumerate}[\rm ({1}.1)]
\item the diffeomorphism   is the identity on $M$ (where $M$ is embedded in the normal bundle as the zero section), and
\item 
\label{tubular-nbd-cond}
the differential of the diffeomorphism, restricted to vertical tangent vectors, induces the identity map from   $N_VM$ to itself.
\end{enumerate}
If $E$ is the Euler vector field on the normal bundle, then any tubular neighborhood embedding carries $E$ to an Euler-like vector field defined in a neighborhood of $M$ in $V$.  Let us call this  the Euler-like vector field \emph{associated} to the tubular neighborhood embedding.

 Bursztyn, Lima and Meinrenken proved  the following attractive result:

\begin{theorem}[See {\cite[Proposition 2.6]{BLM}}]
\label{thm-blm}
 The correspondence that assoc\-iates to a tubular neighborhood embedding its associated Euler-like vector field determines a \emph{bijection} from germs of tubular neighborhoods to germs of Euler vector fields.
\end{theorem}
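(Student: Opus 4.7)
The plan is to construct a two-sided inverse to the correspondence $\psi \mapsto E_\psi$ from germs of tubular neighborhood embeddings to germs of Euler-like vector fields, thereby establishing the bijection.

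For injectivity, suppose $\psi_0, \psi_1 \colon N_VM \to V$ are two tubular neighborhood embeddings giving rise to the same Euler-like vector field near $M$. Then $\sigma := \psi_1^{-1} \circ \psi_0$ is a germ along $M$ of a self-diffeomorphism of $N_VM$ that fixes $M$ pointwise, has vertical differential equal to $\mathrm{id}_{N_VM}$, and preserves the Euler vector field on $N_VM$. Any diffeomorphism preserving the Euler vector field must commute with its scaling flow $\lambda_t(v) = e^t v$ and is therefore fiber-preserving and linear on each fiber. Combined with the identity vertical differential, this forces $\sigma$ to be the identity germ.

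For surjectivity, given an Euler-like $E$ defined near $M$, I would first unpack its structure along $M$. Applying the Euler-like condition with $q=1$ to coordinate functions $f$ vanishing on $M$ and to products of the form $x_i y_j$ shows both that $E|_M = 0$ and that the linearization of $E$ at each $m \in M$ induces the identity on the fiber of $N_VM$ at $m$. Consequently the flow $\phi_t$ of $E$ exists on a neighborhood of $M$ for all $t \le 0$ and contracts exponentially to $M$ as $t \to -\infty$. I would then construct the desired embedding $\psi$ by a rescaled-flow procedure: fix any auxiliary germ of tubular neighborhood $\psi_{\mathrm{aux}}$ (which exists by classical results) and set
\[
\psi(v) \;=\; \lim_{t \to \infty} \phi_t\bigl(\psi_{\mathrm{aux}}(e^{-t} v)\bigr)
\]
for $v$ in a neighborhood of the zero section of $N_VM$. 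In the linear model, where $\phi_t(x) = e^t x$ and $\psi_{\mathrm{aux}}(v) = v$, this expression is identically $v$; in general, $\psi_{\mathrm{aux}}$ furnishes a first-order approximation that the rescaled flow promotes to an exact intertwining map.

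The main obstacle is showing that this limit exists in the $C^\infty$ topology and defines a smooth local diffeomorphism. The key point is that the pushforward $(\psi_{\mathrm{aux}})_* E_{\mathrm{Euler}}$ is itself Euler-like, so its difference with $E$ vanishes along $M$ together with its linearization in the normal directions. This quadratic smallness, combined with the exponentially contracting rescaling $e^{-t} v$, should yield $C^k$ convergence at an exponential rate, uniformly on compact subsets of $N_VM$. Once $\psi$ is known to be a smooth local diffeomorphism restricting to the identity on $M$ and inducing the identity on $N_VM$, the intertwining identity $(d\psi)(E_{\mathrm{Euler}}) = E$ is immediate from the construction, which was designed precisely to conjugate the scaling flow on $N_VM$ with the flow $\phi_t$ of $E$.
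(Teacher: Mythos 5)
Your proposal is correct in outline but takes a genuinely different route from the paper, and the step you yourself flag as ``the main obstacle'' is in fact the entire analytic content of the existence half; as written it is asserted rather than proved.

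The uniqueness half matches the paper's argument in all essentials: the composite $\sigma=\psi_1^{-1}\circ\psi_0$ commutes with the scaling flow, hence is fiber-preserving (let $t\to\infty$ in $\sigma(e^{-t}v)=e^{-t}\sigma(v)$ and project to $M$), and a smooth map satisfying this homogeneity equals its own vertical derivative at the zero section by the rescaled Taylor estimate $\|\sigma(u)-\sigma_*u\|\le C\|u\|^2$; your phrase ``therefore\dots linear on each fiber'' silently invokes exactly this estimate, which is the content of the paper's lemma on diffeomorphisms commuting with scalars. Your preliminary observations --- that testing the Euler-like condition on products $x_iy_j$ forces $E|_M=0$ and that the normal linearization of $E$ is the identity --- are also correct.

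For existence, your rescaled-flow limit $\psi(v)=\lim_{t\to\infty}\phi_t\bigl(\psi_{\mathrm{aux}}(e^{-t}v)\bigr)$ is essentially the original Bursztyn--Lima--Meinrenken argument, whereas the paper deliberately avoids any limiting process: it extends $E$ to a vertical field $\boldsymbol{E}$ on the deformation space $\N_VM$, shows algebraically (via the Rees algebra) that $\boldsymbol{T}=\lambda^{-1}\boldsymbol{E}+\partial/\partial\lambda$ is a smooth vector field on all of $\N_VM$, and produces the tubular neighborhood as the time-one flow of $\boldsymbol{T}$ from the fiber $N_VM$ over $\lambda=0$ to the fiber $V$ over $\lambda=1$, patched using the bracket relation $[\boldsymbol{T},\boldsymbol{C}]=\boldsymbol{T}$. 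The payoff of the paper's route is that smoothness of the resulting map is automatic, being a restriction of the flow of a single smooth vector field on a genuine smooth manifold. In your route that smooth structure is replaced by a convergence estimate, and that estimate is where all the work lives: you must show (i) that $\phi_t(\psi_{\mathrm{aux}}(e^{-t}v))$ is defined for all large $t$, uniformly on a neighborhood of the zero section --- the forward flow of $E$ is expanding, so this requires balancing the contraction $e^{-t}$ against the expansion of $\phi_t$ --- and (ii) that the increments in $t$ are summable in every $C^k$ norm, using the quadratic vanishing of $E-(\psi_{\mathrm{aux}})_*E_{\mathrm{Euler}}$ along $M$ together with Gronwall-type bounds on the linearized flow. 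Neither point is difficult, but both are nontrivial, and ``should yield $C^k$ convergence at an exponential rate'' is precisely where your proof currently stops. Carrying out those estimates recovers the BLM proof; the deformation-space argument exists to make them unnecessary.
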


  The theorem has a number of applications, and reader is referred to  \cite{BLM} for full details, but here is a simple   example.  Let $(M,\omega)$ be a symplectic manifold and let $m$ be a point in $M$.  By the Poincar\'e lemma there is a $1$-form $\alpha$ on $M$ such that $d\alpha = \omega$ near $m$. In addition, $\alpha$ can be chosen so that the vector field $X$ defined by $\iota_X \omega = 2\alpha$ is Euler-like for the embedding of $\{m\}$ into $M$ (note that this is a first-order condition on the coefficients of $\alpha$ at $m$; simple linear algebra shows it can be satisfied).  The corresponding tubular neighborhood identifies $\omega$ with a $2$-form on $T_mM$ having constant coefficients in any linear coordinate system.  This proves the Darboux theorem.
  
Other applications stem from the fact that any affine combination, with $C^\infty$-function coefficients, of Euler-like vector fields is again an Euler-like vector field.  So for example equivariant tubular neighborhood embeddings for compact group actions can  be constructed  by averaging Euler-like vector fields. In addition, equivariant forms of the Darboux theorem, and more generally equivariant normal form theorems, can be proved this way.

We shall examine Theorem~\ref{thm-blm} from the perspective of the \emph{deformation to the normal cone} that is associated to the embedding of $M$ into $V$, which in this paper we shall simply call  the \emph{deformation space} associated to the embedding.  
Among other things, the deformation space $\N_V M$  is a smooth manifold  that is equipped with a submersion onto $\R$. The fibers of this submersion over all nonzero $x \in \R$ are copies of $V$, while the fiber over $x=0$ is the normal bundle for the embedding of $M$ into $V$.
So the deformation space   may be described, as a set, as a disjoint union
\begin{equation*}
\N_VM = N_VM {\times} \{ 0\}\,\, \sqcup\, \bigsqcup _{\lambda \in \R ^{\times}}  V {\times} \{ \lambda\}.
\end{equation*}
  See Section~\ref{sec-deformation} for further details, including, most importantly, a review of the smooth manifold structure on $\N_VM$.

Here is a sketch of the proof of Theorem~\ref{thm-blm}. If $E$ is an Euler-like vector field for $M\subseteq V$, then there is an associated  vector field $\boldsymbol{E}$ on $\N_VM$ that is vertical for the submersion to $\R$, restricts to a copy of $E$ on each fiber $V{\times}\{\lambda\}$, and restricts to the Euler vector field on the zero fiber $N_VM{\times}\{0\}$. See Lemma~\ref{lemma-char-euler-like} (this property characterizes Euler-like vector fields). 

There is also a canonical vector field $\boldsymbol{C}$ on the deformation space that restricts to $\lambda\cdot   \partial/\partial \lambda$ on the open set 
\[
V {\times} \R^{\times} = \N_V M \big\vert _{\R^{\times}}
\]
(moreover  $\boldsymbol{C}$ is vertical on the fiber over $0\in \R$, and is the negative of the Euler vector field there).  The formula 
\[
\lambda \cdot \boldsymbol{T} =   \boldsymbol{C} +\boldsymbol{E}  
\]
defines a third ``translation'' vector field $\boldsymbol{T}$ on the deformation space.  The  time $t$ flow map associated to the vector field $\boldsymbol{T}$ sends the  fiber of the deformation space over $\lambda {=}0$ to the fiber over $\lambda {=}t$, and its differential in the vertical direction is $t$ times the identity (compare condition (1.2) above). The $t{=}1$ map is defined in a neighborhood of the zero section in the normal bundle, and is a tubular neighborhood embedding.  This associates a tubular neighborhood embedding to the Euler vector field $E$, which is the main issue in proving Theorem~\ref{thm-blm}.

Our approach throughout will be algebraic, treating vector fields very explicitly as derivations of algebras of smooth functions, and so on.  In addition we shall follow the algebraic-geometric approach and   {define} the deformation space $\N_VM$ to be  the spectrum  of  the Rees algebra associated to the filtration of smooth functions on $V$ by order of vanishing on $M$.  This point of view   fits very well with our second purpose, which is   to study deformation spaces   in the context of \emph{filtered manifolds}.   

A filtered manifold is a smooth manifold   that is   equipped with an increasing filtration on its tangent bundle which is compatible with Lie brackets of vector fields; see Definition~\ref{def-filtered-manifold} for details.    This concept has arisen in a number of interrelated areas, including  partial differential equations  and  sub-Riemannian geometry.  More recently,  filtered manifolds  have received  attention in noncommutative geometry thanks to work in index theory by  Connes and Moscovici \cite{ConnesMoscovici95}, Ponge \cite{Ponge00,Ponge06} and Van Erp \cite{vanErp05,vanErp10a,vanErp10b}.  

Prominent in these noncommutative-geometric works are generalizations of   Connes'  \emph{tangent groupoid} \cite{Connes94}, which is the deformation space associated to the diagonal embedding of an ordinary smooth  manifold into its square.  Our algebraic point of view leads to a new perspective  on the tangent groupoid (new at least in index theory), and a new construction of its generalizations in filtered manifold theory.

A recurring theme in the theory of  filtered manifolds is the importance of a family of unipotent ``osculating groups'' parametrized by the points of a filtered manifold. These are   central to the theory of deformation spaces and the tangent groupoid, since the counterpart  in the filtered manifold context of the normal bundle from ordinary smooth manifold theory is a bundle of homogeneous spaces of unipotent groups. 
 One of our  main observations (which is very simple) is that  the osculating groups emerge     naturally from the algebraic approach that we are taking here; see Section~\ref{sec-filtered-manifolds}.  Once the unipotent groups have been understood, the construction of the tangent groupoid (or any other deformation space) for filtered manifolds  is a  near-verbatim copy of the    construction for ordinary manifolds.  

At the end of the paper we shall return to Euler-like vector fields, but now     in the context of filtered manifolds, and examine the preferred coordinate systems on filtered manifolds that they give rise to---see Remarks~\ref{remark-carnot1} and    \ref{remark-carnot2}.

We are grateful to Raphael Ponge for comments concerning the literature, and to Robert Yuncken and Erik van Erp for several enlightening conversations.  In addition we thank Eckhard Meinrenken, Ralf Meyer and the referee for suggesting many corrections to,  and improvements of, the original manuscript.

\section{Smooth Manifolds From Algebras}

In this section we shall give some elementary algebraic definitions that we shall use throughout the paper,  give   criteria   guaranteeing that the spectrum of an algebra carries a smooth manifold structure, and compare derivations on algebras to vector fields on  spectra in the manifold case.

\begin{definition} 
\label{def-spectrum}
Let $A$ be an associative and commutative\footnote{It is not necessary to assume commutativity, but the definitions that follow are not very interesting in the noncommutative case.}  algebra (with a multiplicative identity) over the field of real numbers. A \emph{character} of $A$ is a nonzero algebra homomorphism
\[
\varphi \colon A \longrightarrow \R.
\]  
 The \emph{spectrum} of $A$ is the set of all characters. We equip it with the topology of pointwise convergence, that is, the topology having the fewest open sets so that the evaluation maps 
\[
\widehat a \colon \varphi \longmapsto \varphi (a)
\]
 are continuous functions on the spectrum, for every $a\in A$.  \end{definition}

\begin{definition}
\label{def-sheaf}
  Let $A$ be as above. Denote by $\S_A$ the smallest subsheaf of the sheaf of continuous real-valued functions on the spectrum of $A$ that includes all global sections of the form
   \[
f =  g (\widehat{a_1}  , \dots, \widehat{a_k} ) ,
\]
where $k\in \mathbb N$, where $a_1,\dots, a_k\in A$, and where $g$ is a smooth, real-valued function    on $\R^k$.
\end{definition}

\begin{definition}
\label{def-smooth-generation}
Let $\S$ be a sheaf of real-valued   functions on a topological space $X$. Let  $\Omega\subseteq  X$ be an open subset.     We shall say that functions $h_1,\dots, h_n\in \S(\Omega)$ \emph{smoothly generate $\S(\Omega)$}  if for every $f\in \S(\Omega)$ there is a smooth function $g$ on $\R^n$ such that 
\[
f = g(h_1  , \dots, h_n).
\]
In the case where $X= \operatorname{Spectrum}(A)$ and $\S = \S_A$, we shall also say that elements $a_1,\dots, a_n\in A$ smoothly generate $\S_A(\Omega)$ if the functions $h_j = \widehat a_j \vert _\Omega$ satisfy the above condition.\end{definition}

\begin{lemma}
\label{lemma-mfld-characterization}
Let $A$ be a commutative algebra over the real numbers.  The spectrum of $A$ is a smooth manifold of dimension $n$, with $\S_A$ equal to its sheaf of smooth functions, if and only if  for every point    in the spectrum there is an open  neighborhood $\Omega$ of that point, and there exist $a_1,\dots, a_n$ in $A$, such that 
\begin{enumerate}[\rm (i)]

\item  the elements $a_1, \dots, a_n$ smoothly generate  $\S_A(\Omega)$, and 

\item the map 
\[
(\widehat a_1,\dots, \widehat a_n) \colon
\operatorname{Spectrum}(A) \longrightarrow \R ^n
\]
is a homeomorphism from   $\Omega$   to an open set in  $\R^n$.  

\end{enumerate}

\end{lemma}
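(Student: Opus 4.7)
The plan is to prove the two implications separately, reducing both to careful use of the inverse function theorem and the definition of $\S_A$ as the smallest subsheaf of continuous functions containing the global sections $g(\widehat{a_1},\dots,\widehat{a_k})$.

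For the ``only if'' direction, assume the spectrum $X$ carries a smooth $n$-manifold structure whose sheaf of smooth functions is $\S_A$, and let $\varphi_0\in X$. Choose any smooth chart $(U,(x_1,\dots,x_n))$ about $\varphi_0$. Since each $x_i$ is a section of $\S_A(U)$, the definition of $\S_A$ gives, on a neighborhood $\Omega \subseteq U$ of $\varphi_0$, a finite list $a_1,\dots,a_N\in A$ and smooth $F_i\colon \R^N\to \R$ with $x_i = F_i(\widehat{a_1},\dots,\widehat{a_N})$ on $\Omega$. Because $(x_1,\dots,x_n)$ is a diffeomorphism onto an open subset of $\R^n$, the Jacobian of $(\widehat{a_1},\dots,\widehat{a_N})$ with respect to $(x_1,\dots,x_n)$ has rank $n$ at $\varphi_0$, so some $n$-subset $a_{j_1},\dots,a_{j_n}$ has Jacobian of rank $n$ at $\varphi_0$.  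The inverse function theorem then shows that, after shrinking $\Omega$, the map $(\widehat{a_{j_1}},\dots,\widehat{a_{j_n}})$ is a diffeomorphism from $\Omega$ onto an open subset of $\R^n$, establishing (ii). Condition (i) follows: any section of $\S_A(\Omega)$ is locally of the form $g(\widehat{b_1},\dots,\widehat{b_k})$ with $b_\ell\in A$, and each $\widehat{b_\ell}$, being smooth in the chart, is a smooth function of the new coordinates $\widehat{a_{j_1}},\dots,\widehat{a_{j_n}}$.

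For the converse, cover the spectrum by open sets $\Omega_\alpha$ carrying tuples $(a^\alpha_1,\dots,a^\alpha_n)$ satisfying (i) and (ii). Condition (ii) supplies homeomorphisms to open subsets of $\R^n$, and the spectrum is Hausdorff because the topology of pointwise convergence on $A$ is the subspace topology inherited from $\R^A$. To verify that the transition maps between two such charts $(\Omega_\alpha,(\widehat{a^\alpha_i}))$ and $(\Omega_\beta,(\widehat{a^\beta_i}))$ are smooth, observe that each $\widehat{a^\beta_i}$ is a global section of $\S_A$, hence restricts to an element of $\S_A(\Omega_\alpha)$; condition (i) for $\Omega_\alpha$ then produces a smooth $H^{\alpha,\beta}_i\colon \R^n\to \R$ with $\widehat{a^\beta_i} = H^{\alpha,\beta}_i(\widehat{a^\alpha_1},\dots,\widehat{a^\alpha_n})$ on $\Omega_\alpha$, which is exactly the formula for the transition map on $\Omega_\alpha\cap \Omega_\beta$. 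Thus the spectrum is a smooth $n$-manifold.

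It remains to identify $\S_A$ with the resulting sheaf of smooth functions. One inclusion is immediate from (i) and (ii): any element of $\S_A(\Omega_\alpha)$ is a smooth function of the coordinates $\widehat{a^\alpha_1},\dots,\widehat{a^\alpha_n}$ and hence smooth on the chart. For the reverse inclusion, let $f$ be smooth on $\Omega_\alpha$ and pick a point $\varphi$ with chart value $p\in \R^n$; choose a compactly supported smooth bump $\chi$ on $\R^n$ equal to $1$ near $p$ and extend $f\circ(\widehat{a^\alpha_1},\dots,\widehat{a^\alpha_n})^{-1}$ times $\chi$ to a smooth function $\widetilde F$ on all of $\R^n$. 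Then $\widetilde F(\widehat{a^\alpha_1},\dots,\widehat{a^\alpha_n})$ is a global section of the required form in $\S_A$, agreeing with $f$ on a neighborhood of $\varphi$, so by the sheaf property $f\in \S_A(\Omega_\alpha)$. I expect the main technical obstacle to be the bookkeeping for how many times one must shrink the neighborhood $\Omega$ in the necessity direction (once for the local representation of $x_i$, once for the inverse function theorem, and once more to represent arbitrary $\widehat{b_\ell}$ in the new coordinates), and, in the converse, the careful use of bump-function extensions to turn a locally defined chart formula into an honest global section of $\S_A$.
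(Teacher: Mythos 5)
Your proof is correct and follows essentially the same route as the paper's: in the necessity direction you express the chart coordinates as smooth functions of finitely many $\widehat a_j$ and then use a rank argument together with the inverse function theorem to extract an $n$-element sub-tuple serving as coordinates, while the sufficiency direction is the routine chart-building (transition maps smooth by condition (i), plus a bump-function argument to identify $\S_A$ with the smooth functions) that the paper dispatches in a single sentence. The one small divergence is in how the sub-tuple is chosen: the paper selects a coordinate subspace on which the submersion $g$ restricts to a local diffeomorphism, whereas you select $n$ rows of the Jacobian of $(\widehat a_1,\dots,\widehat a_N)$ with respect to the chart coordinates (i.e.\ you exploit that $h$ is an immersion rather than that $g$ is a submersion), and your criterion is the one that directly yields property (ii).
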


\begin{proof}
If $a_1,\dots, a_n$ exist for every point in the spectrum, as in the statement of the lemma, then the spectrum is certainly a smooth $n$-manifold with local coordinates $\widehat a_1,\dots, \widehat a_n$.
Conversely, suppose that the spectrum is a smooth $n$-manifold in such a way that $\S_A$ is the sheaf of smooth functions.  
Let $\varphi$ be a point in the spectrum and let $x_1,\dots, x_n$ be local coordinates at $\varphi$.  By definition of $\S_A$, there are elements $a_1,\dots , a_N\in A$ and smooth functions $g_i$ on $\R^N$   for   $i = 1,\dots , n$ such that 
\[
x_i = g_i (\widehat a_1 ,\dots, \widehat a_N)
\]
near $\varphi$.  We can assume that $\widehat a_j (\varphi) = 0$, for all $j$. Since the $x_i$ are local coordinates, there are smooth functions $h_j$ on $\R^n$ for $j=1,\dots, N$ such that 
\[
\widehat a_j = h_j (x_1,\dots, x_n)
\]
near $\varphi$.    If $g\colon \R^N \to \R^n$ and $h\colon \R^n \to \R^N$ are the smooth functions whose components are $g_i$ and $h_j$, then $g\circ h = \operatorname{id}_{\R^n}$ near $0$. So $g$ is a submersion at $0$.  By  linear algebra and the inverse function theorem, there is an inclusion $k$ of $\R^n$ into $\R^N$ as a coordinate subspace so that the composition
\[
\R^n \stackrel k \longrightarrow \R^N \stackrel g \longrightarrow  \R^n
\]
is a local diffeomorphism at $0$.  If $k$ maps the $i$'th standard basis vector of $\R^n$  to the $k_i$'th standard basis vector of $\R^N$, then the elements $a_{k_1}, \dots, a_{k_n}$ have the properties (i) and (ii) in the statement of the lemma. 
\end{proof}

For the rest of this section we shall assume that the spectrum of $A$  {is} indeed  a smooth manifold, with $\S_A$ the sheaf of smooth functions.  

\begin{definition}
\label{def-smooth-X}
Let $X$ be a 
 derivation of the algebra  $A$,  and let $\Omega$ be an open subset of the spectrum of $A$. We shall say that $ X$ \emph{is compatible with a vector field $\widehat X$ on $\Omega$} if  the diagram 
  \[
  \xymatrix@C=50pt{
 A  \ar_{X}[d]   \ar^-{a\longmapsto \widehat a} [r]  & C^\infty (\Omega)\ar[d]^{\widehat X} \\
A \ar[r]_-{a \longmapsto \widehat a} &    C^\infty (\Omega) 
   }
   \]
   commutes. 
\end{definition}

An obvious necessary condition for $X$ to be compatible with a vector field on $\Omega$ is  that if $\Lambda\subseteq \Omega$ is any open subset, if 
\[
a,a_1,\dots, a_k\in A,
\]
 if $g$ is a smooth function of $k$ variables, and if 
\begin{equation}
\label{eq-X-and-smooth-calc0}
\widehat a\vert _\Lambda  =g (\widehat{a_1},\dots, \widehat{a_k})\vert _\Lambda,
 \end{equation}
then
\begin{equation}
\label{eq-X-and-smooth-calc}
\widehat {X(a)} \vert_\Lambda 
 = \sum_{i=1}^k \,\, \widehat{ X(a_i)}\vert _\Lambda  
 \cdot g_i\bigl (\widehat{a_1},\dots, \widehat{a_k}\bigr )  \vert _\Lambda 
 \end{equation}
where $g_i$ denotes the $i$'th partial derivative of $g$.  

\begin{definition}
\label{def-smooth-gen}
 If $\Omega$ is an open subset of the spectrum,  then we shall   say that a derivation $X$ of $A$ is \emph{smooth over $\Omega$} if \eqref{eq-X-and-smooth-calc} holds for  every open subset  $\Lambda\subseteq \Omega$, all  $a,a_1,\dots, a_k\in A$, and all $g$ as in \eqref{eq-X-and-smooth-calc0}.
\end{definition}

\begin{lemma}
\label{lemma-smooth-extension1}
If the spectrum of $A$ {is} a smooth manifold,  then every  derivation of $A$ that is smooth over an open subset $\Omega$ of the spectrum of $A$  is compatible with      a   unique vector field on $\Omega$. 
\end{lemma}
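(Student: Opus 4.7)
The plan is to build $\widehat X$ locally on coordinate patches supplied by Lemma~\ref{lemma-mfld-characterization}, verify that the local formulas are independent of the choice of coordinates (this is where the hypothesis \eqref{eq-X-and-smooth-calc} does the real work), and then check compatibility and uniqueness.

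First I would cover $\Omega$ by open subsets $\Lambda$ on each of which, by Lemma~\ref{lemma-mfld-characterization}, there exist $a_1,\dots,a_n \in A$ whose images $\widehat{a_1},\dots,\widehat{a_n}$ form a coordinate chart on $\Lambda$ and smoothly generate $\S_A(\Lambda)$. On such a $\Lambda$ every $f \in C^\infty(\Lambda)$ may be written as $f = g(\widehat{a_1},\dots,\widehat{a_n})$, with $g$ uniquely determined as a smooth function on the open image of the coordinate map. I would then set
\[
\widehat X(f) = \sum_{i=1}^n \widehat{X(a_i)} \cdot g_i(\widehat{a_1},\dots,\widehat{a_n}),
\]
which is a well-defined smooth function on $\Lambda$, the good definition in $f$ following from the uniqueness of $g$.

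The key step is to show that the vector field produced on $\Lambda$ does not depend on the coordinate elements chosen. If $b_1,\dots,b_n \in A$ is a second such system on $\Lambda$, then each $\widehat{b_j} = h_j(\widehat{a_1},\dots,\widehat{a_n})$ for a smooth $h_j$, and hypothesis \eqref{eq-X-and-smooth-calc} applied with $a = b_j$ yields
\[
\widehat{X(b_j)} = \sum_{i=1}^n \widehat{X(a_i)} \cdot (h_j)_i(\widehat{a_1},\dots,\widehat{a_n}).
\]
Writing an arbitrary $f$ simultaneously as $g(\widehat{a_1},\dots,\widehat{a_n})$ and $g'(\widehat{b_1},\dots,\widehat{b_n})$ and invoking the chain-rule identity $g_i = \sum_j g'_j \cdot (h_j)_i$, the two formulas for $\widehat X(f)$ match. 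Hence the local definitions patch to a single vector field $\widehat X$ on $\Omega$.

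Compatibility is then routine: for $a \in A$, write $\widehat a = g(\widehat{a_1},\dots,\widehat{a_n})$ on a patch $\Lambda$, and apply \eqref{eq-X-and-smooth-calc} again to obtain $\widehat X(\widehat a) = \widehat{X(a)}$ there. Uniqueness is clear, since any compatible $\widehat X'$ must agree with $\widehat X$ on each coordinate function $\widehat{a_i}$, and a vector field is determined by its action on local coordinates. The main obstacle is the coordinate-change step: the hypothesis \eqref{eq-X-and-smooth-calc} is stated only for elements of the form $\widehat a$ with $a\in A$, so to use it one must first recast the target smooth function as a smooth combination of the chosen coordinate generators; the chain rule then closes the argument.
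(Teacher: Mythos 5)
Your proof is correct and follows essentially the same route as the paper: cover $\Omega$ by coordinate patches from Lemma~\ref{lemma-mfld-characterization}, define $\widehat X$ by the coordinate formula, and use the smoothness hypothesis \eqref{eq-X-and-smooth-calc} both for compatibility and for independence of the choice of coordinates. You merely spell out the chain-rule computation behind the coordinate-change step, which the paper leaves implicit.
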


\begin{proof}
By Lemma~\ref{lemma-mfld-characterization}, around every point of the spectrum there is a neighborhood $\Lambda$, and elements $a_1,\dots, a_n$ of $A$, so that  $\widehat{a_1},\dots, \widehat {a_n}$ are  coordinate functions on $\Lambda$.  Since any vector field on $\Lambda$ is completely determined by its action on a system of   coordinate functions, we see that there is at most one vector field on $\Omega$ that is compatible with any given derivation $X$.  

As for existence, given local coordinates  of the type  $\widehat{a_1},\dots, \widehat {a_n}$ on some open subset $\Lambda$ of  $\Omega$,  we can define $\widehat X$ on $\Lambda$ by 
\begin{equation*}
\label{eq-X-and-smooth-calc1}
\widehat {X} \bigl (g  (\widehat{a_1},\dots, \widehat{a_n}  )\bigr )   = \sum_{i=1}^n  \widehat{ X(a_i)}  \cdot g_i\bigl (\widehat{a_1},\dots, \widehat{a_n}\bigr ) .
\end{equation*}
This is a vector field, it is compatible with  $X$ on $\Lambda$ by \eqref{eq-X-and-smooth-calc}, and it is independent of the choice of local coordinates, again by \eqref{eq-X-and-smooth-calc}.  So we obtain a   vector field defined on all of  $\Omega$, as required.
 \end{proof}

In our calculations it will be helpful to observe the following fact:

\begin{lemma} 
\label{lemma-smooth-extension2}
Let $\Omega$ be an open subset of the spectrum of $A$ whose complement has empty interior.  Every derivation of $A$  that is smooth over $\Omega$ is smooth over the full spectrum.
\end{lemma}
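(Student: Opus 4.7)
The plan is to reduce smoothness on an arbitrary open set $\Lambda$ to smoothness on $\Lambda \cap \Omega$ and then pass to the limit using continuity. Concretely, suppose that $\Lambda$ is an arbitrary open subset of the spectrum of $A$, that $a, a_1, \dots, a_k \in A$, and that $g$ is a smooth function of $k$ variables such that
\[
\widehat{a}\vert_{\Lambda} = g(\widehat{a_1}, \dots, \widehat{a_k})\vert_{\Lambda}.
\]
I want to show the derived identity \eqref{eq-X-and-smooth-calc} on $\Lambda$, using only that $X$ is smooth over $\Omega$.

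First I would restrict the above hypothesis to the open set $\Lambda \cap \Omega$, which is contained in $\Omega$. By the hypothesis that $X$ is smooth over $\Omega$, the derived identity
\[
\widehat{X(a)}\vert_{\Lambda \cap \Omega} = \sum_{i=1}^{k} \widehat{X(a_i)}\vert_{\Lambda \cap \Omega} \cdot g_i\bigl(\widehat{a_1}, \dots, \widehat{a_k}\bigr)\vert_{\Lambda \cap \Omega}
\]
holds automatically.

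The second step is the density argument. Both sides of the desired identity, viewed as functions on $\Lambda$, are continuous: the left side is $\widehat{X(a)}\vert_\Lambda$, and the right side is a sum of products of continuous functions of the form $\widehat{b}$ (with $b \in A$) and continuous functions obtained by composing the continuous map $(\widehat{a_1}, \dots, \widehat{a_k})$ with the smooth maps $g_i$. The hypothesis that the complement of $\Omega$ has empty interior in the spectrum means precisely that $\Omega$ is dense, and therefore $\Lambda \cap \Omega$ is dense in $\Lambda$. Two continuous real-valued functions on $\Lambda$ that agree on a dense subset must agree everywhere, so the identity extends from $\Lambda \cap \Omega$ to all of $\Lambda$, as required.

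The only subtlety worth highlighting — and it is not really an obstacle — is checking that the pieces entering the derived identity genuinely are continuous functions on $\Lambda$ (so that the density argument applies). This is immediate from the definition of the topology on the spectrum in Definition~\ref{def-spectrum}, under which every $\widehat{b}$ with $b \in A$ is continuous, combined with continuity of the partial derivatives $g_i$. Once this is in place, no further computation is needed.
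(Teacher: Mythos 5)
Your proposal is correct and follows essentially the same route as the paper: restrict the hypothesis to $\Lambda \cap \Omega$, apply smoothness over $\Omega$ to get the derived identity there, and then use density of $\Lambda \cap \Omega$ in $\Lambda$ together with continuity of both sides to extend the identity to all of $\Lambda$. The only difference is that you spell out the continuity check that the paper leaves implicit, which is a reasonable addition but not a new idea.
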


\begin{proof}
Let $\Lambda$ be an open subset of the spectrum.  By hypothesis, any identity of smooth functions \eqref{eq-X-and-smooth-calc0} over $\Lambda$ leads to an identity of the type \eqref{eq-X-and-smooth-calc} over $\Lambda\cap \Omega$.  But $\Lambda \cap \Omega$ is dense in $\Lambda$, so the identity \eqref{eq-X-and-smooth-calc} holds over  $\Lambda$.
\end{proof}

\section{The Deformation Space for Smooth Manifolds}
\label{sec-deformation}

Let  $V$ be a smooth manifold and let $M$ be a smooth, embedded submanifold   (both of them without boundary, as will always be the case in this paper).   In this section we shall review the construction of the \emph{deformation to the normal cone}, or \emph{deformation space}, associated to the inclusion of $M$ into $V$.  See \cite[Chapter 5]{Fulton} for the standard treatment in algebraic geometry and see for example \cite{Higson10}  for the $C^\infty$-version. 

We shall emphasize the algebraic aspects of the  construction.  These play only a modest role for ordinary manifolds,  but they will be helpful when  we consider filtered manifolds later on.   

Here is a summary of what we shall do. 
The \emph{deformation space} associated to the embedding of $M$ into $V$   may be described, as a set, as a disjoint union
\begin{equation}
\label{eq-def-space-descr1}
\N_VM = N_VM {\times} \{ 0\}\, \sqcup\, \bigsqcup _{\lambda \in \R ^{\times}}  V {\times} \{ \lambda\} ,
\end{equation}
as we noted in the introduction. It is given the weakest topology  so that the obvious maps to $\R$ and to $V$ are continuous, and so that, in addition, for every smooth function $a$ on $V$ that vanishes on $M$, the function 
\[
  \begin{aligned}
(X_m,0) &\longmapsto X_m(a) \\
(v,\lambda)  & \longmapsto \lambda^{-1} a(v)
\end{aligned} 
\]
is also continuous.  Here $X_m$ is a normal vector at $m\in M$, that is, a   vector in the quotient space $T_mV/T_mM$.  The value $X_m(a)$   is well defined because $a$ vanishes on $M$.  We shall prove that  the deformation space carries a smooth manifold structure  so that all the functions above are smooth (in fact they smoothly generate the sheaf of all smooth functions on the deformation space in the sense of Definition~\ref{def-smooth-generation}).

Now we proceed with the details.

\begin{definition}
\label{def-rees-alg}
Denote by  $A(V,M)$ the $\R$-algebra  of all Laurent polynomials
\[
f (t) = \sum_{q\in \Z} a_q  t^{-q}
\]
whose coefficients $a_q$  are smooth, real-valued  functions on $V$ that satisfy the condition
\[
q > 0 \quad \Rightarrow \quad \text{$a_q$ vanishes to order $q$ on $M$} 
\]
(we emphasize that by definition  only finitely $a_q$  are nonzero). The space $A(V,M)$  is indeed an algebra, because if $a_p$ vanishes to order $p$ on $M$, and $a_q$ vanishes to order $q$ on $M$, then the pointwise product $a_pa_q$ vanishes to order $p+q$.
The \emph{deformation space} $\defN_VM$ is the  {spectrum} of $A(V,M)$.
\end{definition}
 
 Our first objective is to identify $\N_VM$, defined as a spectrum, with   \eqref{eq-def-space-descr1}.  Associated  to $t\in A(V,M)$ is the continuous map 
\begin{equation}
\label{eq-R-submersion}
 \widehat t \colon \N_V M \longrightarrow \R 
 \end{equation}
 as in Definition~\ref{def-spectrum}, and we shall compute the fibers over each $\lambda\in \R$.  These are the spectra of the following algebras:
 
\begin{definition}
\label{def-A-sub-lambda}
For $\lambda \in \R$ denote by   $A_\lambda (V,M)$ the quotient of $A(V,M)$ by the ideal generated by $t{-}\lambda$.  
\end{definition}

\begin{lemma}
If $\lambda\in \R$ is   nonzero, then $A_\lambda (V,M)$  is isomorphic to $C^\infty(V)$ via evaluation of Laurent polynomials at $t=\lambda$. 
\end{lemma}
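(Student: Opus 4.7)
The plan is to exhibit an explicit inverse to evaluation at $\lambda$. The evaluation map
\[
\operatorname{ev}_\lambda \colon A(V,M) \longrightarrow C^\infty(V), \qquad \sum_q a_q t^{-q}\longmapsto \sum_q a_q \lambda^{-q}
\]
is well-defined since $\lambda\ne 0$ (so negative powers of $\lambda$ make sense), and it clearly sends $t-\lambda$ to $0$, hence descends to a homomorphism $\overline{\operatorname{ev}}_\lambda\colon A_\lambda(V,M)\to C^\infty(V)$. Composing with the obvious map $C^\infty(V)\hookrightarrow A(V,M) \twoheadrightarrow A_\lambda(V,M)$ that regards a smooth function as a constant Laurent polynomial gives the identity on $C^\infty(V)$, so $\overline{\operatorname{ev}}_\lambda$ is surjective and the candidate inverse is injective.

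The heart of the argument is then surjectivity of the candidate inverse, i.e.\ showing that every class in $A_\lambda(V,M)$ is represented by a constant Laurent polynomial. For this it suffices to check that, for each $q\in\Z$, the Laurent polynomial $t^{-q}-\lambda^{-q}$ lies in the principal ideal $(t-\lambda)\cdot A(V,M)$, for then
\[
f(t) - \operatorname{ev}_\lambda(f) \;=\; \sum_q a_q\bigl(t^{-q} - \lambda^{-q}\bigr)
\]
is automatically in that ideal. For $q\le 0$ one has $t^{-q} - \lambda^{-q} = (t-\lambda)(t^{-q-1}+t^{-q-2}\lambda+\cdots+\lambda^{-q-1})$, a polynomial in $t$ with no vanishing condition needed on the coefficients, so this is already in $A(V,M)$. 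For $q\ge 1$ one has the algebraic identity
\[
t^{-q} - \lambda^{-q} \;=\; -(t-\lambda)\sum_{k=1}^{q} \lambda^{k-q-1}\, t^{-k},
\]
and multiplying by $a_q$ gives the coefficient $-a_q\lambda^{k-q-1}$ on $t^{-k}$.

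The one thing that needs to be verified, and which I expect to be the only point requiring any real bookkeeping, is that these quotients genuinely live in $A(V,M)$, i.e.\ that the coefficients $-a_q\lambda^{k-q-1}$ appearing above still satisfy the required vanishing condition on $M$. But since $1\le k\le q$ and $a_q$ vanishes to order $q$ on $M$, it a fortiori vanishes to order $k$, which is exactly what membership in $A(V,M)$ demands. With this checked, $\overline{\operatorname{ev}}_\lambda$ and the constant-embedding map are mutually inverse, establishing the isomorphism $A_\lambda(V,M)\cong C^\infty(V)$.
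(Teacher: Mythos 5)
Your proof is correct and is essentially the paper's argument run in the opposite direction: the paper divides a general element of $\ker(\operatorname{ev}_\lambda)$ by $t-\lambda$ inside $A(V,M)$ to show the kernel equals the ideal, while you divide each $a_q\bigl(t^{-q}-\lambda^{-q}\bigr)$ to show every class is represented by a constant and then invoke the splitting $\overline{\operatorname{ev}}_\lambda\circ\iota=\operatorname{id}$; both hinge on the same check that the quotient's coefficients inherit the required vanishing orders. If anything your monomial-by-monomial bookkeeping is slightly more transparent than the paper's one-line formula, whose coefficients $\sum_{j\ge 0}a_{q-j}\lambda^{j}$ only visibly vanish to the right order after one uses the kernel relation $\sum_p a_p\lambda^{-p}=0$ to rewrite them as $-\sum_{p>q}a_p\lambda^{q-p}$.
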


\begin{proof}
If the element $\sum a_q t^{-q}$ lies in the kernel of evaluation at $\lambda$, then 
\[
\sum a_qt^{-q} = (t-\lambda) \cdot \sum _{q}\bigl ( \sum_{j\ge 0} a_{q-j}\lambda^j\bigr ) t^{-q-1}, 
\]
and the right-most Laurent polynomial lies in $A(V,M)$, as required.
\end{proof}

To handle the case where $\lambda =0$ we need some notation.

\begin{definition}
For each integer $q >0$ denote by  $I_q(V,M)$ the ideal of smooth  functions on $V$  that vanish to order $q$ on $M$. Set $I_0(V,M) = C^\infty (V)$.
\end{definition}

The spaces $I_q(V,M)$ form a decreasing filtration of the algebra of smooth functions on $V$, and  we can form the associated graded algebra
\begin{equation}
\label{eq-assoc-graded}
  \bigoplus _{q \ge 0} I_{q}(V,M)/ I_{q+1}(V,M) .
\end{equation}
If $a\in I_q(V,M)$, then we shall write 
\begin{equation}
\label{eq-assoc-graded-notation}
\langle a \rangle_q\in I_q(V,M)/I_{q+1}(V,M)
\end{equation}
for the coset of $a \in I_q(V,M)$ in the degree $q$ component of \eqref{eq-assoc-graded}.

\begin{lemma}
\label{lemma-assoc-graded-alg-comp1}
The  algebra $A_0(V,M)$   is isomorphic to      the 
associated graded algebra 
\eqref{eq-assoc-graded}
   via the map 
\[
\pushQED{\qed} 
\sum_{q\in \Z} a_q t^{-q} \longmapsto \sum_{q\ge 0} \langle a_q\rangle_q .
\qedhere
\popQED
\]
\end{lemma}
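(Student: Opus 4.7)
The statement is the standard fact that a Rees algebra modulo its distinguished parameter yields the associated graded algebra, transcribed to this setting; the plan is to prove it by hand. My first step would be to introduce a candidate homomorphism
\[
\Phi \colon A(V,M) \longrightarrow \bigoplus_{q \ge 0} I_q(V,M)/I_{q+1}(V,M), \qquad \sum_{q \in \Z} a_q t^{-q} \longmapsto \sum_{q \ge 0} \langle a_q \rangle_q,
\]
then check that it is a surjective algebra homomorphism, and finally verify that its kernel equals the principal ideal $(t)$.

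Multiplicativity of $\Phi$ should follow from the filtration property $I_p(V,M) \cdot I_r(V,M) \subseteq I_{p+r}(V,M)$. The coefficient of $t^{-q}$ in a product of two elements of $A(V,M)$ is $\sum_{p+r = q} a_p b_r$. The terms with both $p,r \ge 0$ contribute in the expected way to the degree $q$ part of the associated graded algebra. Any term with $p < 0$ forces $r > q$, so $a_p b_r \in I_r(V,M) \subseteq I_{q+1}(V,M)$ and therefore vanishes in $I_q/I_{q+1}$; the case $r < 0$ is symmetric. Surjectivity is immediate, since $\langle a \rangle_q = \Phi(a t^{-q})$ for every $a \in I_q(V,M)$.

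The main step is the kernel computation. The inclusion $(t) \subseteq \ker \Phi$ is easy: if $f = tg$ with $g = \sum b_q t^{-q}$, then the coefficient of $t^{-p}$ in $tg$ is $b_{p+1}$, which lies in $I_{p+1}(V,M)$ for $p \ge 0$, so its class in $I_p/I_{p+1}$ is zero. For the reverse inclusion, I would suppose $\Phi(f) = 0$, so that $a_q \in I_{q+1}(V,M)$ for every $q \ge 0$, and then exhibit $f \in (t)$ by factoring each monomial $a_q t^{-q}$ through $t$ individually. For $q \ge 0$ one writes $a_q t^{-q} = t \cdot (a_q t^{-(q+1)})$, and the factor $a_q t^{-(q+1)}$ lies in $A(V,M)$ precisely because $a_q$ now vanishes to order $q+1$. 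For $q < 0$ the same formula $a_q t^{-q} = t \cdot (a_q t^{-(q+1)})$ works, because the new index $q+1 \le 0$ imposes no vanishing condition and the factor lies in $A(V,M)$ automatically.

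There is no serious obstacle; the whole argument is a bookkeeping exercise built on the filtration property $I_{q+1}(V,M) \subseteq I_q(V,M)$ and the ideal property $I_p \cdot I_r \subseteq I_{p+r}$. The point to keep track of is that $A(V,M)$ imposes its vanishing-to-order-$q$ constraint only for $q > 0$, which is exactly what makes the single factorization formula $a_q t^{-q} = t \cdot (a_q t^{-(q+1)})$ succeed in both index ranges and identifies the quotient $A(V,M)/(t)$ with the associated graded algebra.
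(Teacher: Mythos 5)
Your proof is correct and is exactly the routine Rees-algebra verification that the paper leaves to the reader (the lemma is stated with an immediate \qed and no written proof). The three steps — multiplicativity via $I_p\cdot I_r\subseteq I_{p+r}$ with the negative-index terms killed in $I_{q+1}$, surjectivity via $a t^{-q}$, and the kernel computation $\ker\Phi=(t)$ using the factorization $a_q t^{-q}=t\cdot(a_q t^{-(q+1)})$ — are all sound and constitute the intended argument.
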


It is now easy to compute the spectrum of $A_0(V,M)$. 
The degree zero part of $A_0(V,M)$ is  $C^\infty(M)$, and    each character of $A_0(V,M)$ restricts to evaluation at some point $m\in M$ on the degree zero part. The character  therefore  factors through the quotient algebra  $A_{0,m}(V,M)$ by the ideal in $A_0(V,M)$ generated by the vanishing ideal of $m$ in $C^\infty(M)$.

\begin{lemma}
\label{lemma-assoc-graded-alg-comp2}
There is a unique isomorphism from $A_{0,m}(V,M)$ to   
the algebra of   real-valued polynomial functions on the normal vector space 
$
  T_mV  / T_mM
$
for which  
 \[
\langle  a\rangle_1  \longmapsto \big [ X_m \mapsto   X_m (a) \bigr ].
 \]   
 for every normal vector $X_m$ and every smooth function $a$ on $V$ vanishing on $M$.  The spectrum of $A_{0,m}(V,M)$ identifies in this way with $
  T_mV  / T_mM
$. \qed
\end{lemma}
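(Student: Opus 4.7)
The plan is to use the preceding lemma (Lemma~\ref{lemma-assoc-graded-alg-comp1}) to replace $A_0(V,M)$ with the associated graded algebra $\bigoplus_{q\geq 0} I_q(V,M)/I_{q+1}(V,M)$, whose degree-zero component is $C^\infty(M)$. Letting $\mathfrak m_m \subseteq C^\infty(M)$ denote the vanishing ideal of $m$, the ideal generated by $\mathfrak m_m$ in this graded algebra is simply $\bigoplus_q \mathfrak m_m \cdot (I_q/I_{q+1})$, so
\[
A_{0,m}(V,M) \;\cong\; \bigoplus_{q\geq 0}\; \bigl(I_q(V,M)/I_{q+1}(V,M)\bigr)\big/\mathfrak m_m\cdot\bigl(I_q(V,M)/I_{q+1}(V,M)\bigr).
\]

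To compute the right-hand side I would work in a coordinate chart $(x_1,\dots,x_k,y_1,\dots,y_d)$ centered at $m$, with $M$ locally cut out by $x_1=\cdots=x_k=0$. An iterated application of Hadamard's lemma in the $x$-directions shows that $I_q(V,M)$ is locally generated as a $C^\infty(V)$-module by the monomials $x^\alpha$ with $|\alpha|=q$, and that the classes $\langle x^\alpha\rangle_q$ ($|\alpha|=q$) form a free $C^\infty(M)$-basis of $I_q/I_{q+1}$; freeness follows by applying the order-$q$ operators $\partial^\alpha/\partial x^\alpha$ and restricting to $M$, noting that anything in $I_{q+1}$ is killed by such operators on $M$ by definition. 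Reducing further modulo $\mathfrak m_m$ leaves an $\R$-basis $\{\langle x^\alpha\rangle_q : |\alpha|=q\}$ in each degree $q$; summed over $q$ this matches exactly the polynomial algebra on $\R^k$, and in fact on $T_mV/T_mM$ once one identifies $\langle x_i\rangle_1$ with the linear functional $dx_i|_m \in (T_mV/T_mM)^*$.

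For the intrinsic formula, observe that for $a\in I_1(V,M)$ the assignment $X_m\mapsto X_m(a)$ is a well-defined $\R$-linear functional on $T_mV/T_mM$ that vanishes when $a\in I_2(V,M)$ (first derivatives vanish on $M$) and when $a=fb$ with $f(m)=0$ and $b\in I_1(V,M)$ (by the Leibniz rule, using both $f(m)=0$ and $b(m)=0$). Hence there is a canonical linear map from $I_1/(I_2+\mathfrak m_m I_1)$ to $(T_mV/T_mM)^*$; the coordinate calculation above identifies it with an isomorphism, and extending as an $\R$-algebra homomorphism yields the desired isomorphism from $A_{0,m}(V,M)$ onto the polynomial algebra on $T_mV/T_mM$. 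Uniqueness is automatic because the polynomial algebra is generated by its degree-one part. The spectrum identification is then the standard fact that the characters of a polynomial algebra on a finite-dimensional real vector space are precisely the evaluations at points of that vector space. The main technical obstacle is verifying the free generation of $I_q/I_{q+1}$ over $C^\infty(M)$; this is routine via Hadamard's lemma but does require the bookkeeping sketched above.
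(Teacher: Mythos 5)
Your proof is correct. It is worth noting that the paper gives no direct proof of this lemma at all: it is stated with a \qed and deferred, via the accompanying remark, to the more general Theorem~\ref{thm-filtered-A-zero-fiber} for filtered manifolds. There the map is produced abstractly as the orbit homomorphism $a \mapsto [\,h \mapsto \varepsilon(h^{-1}a)\,]$ for the action of the osculating group (which in the present $1$-step case is just the abelian group $T_mV$ acting by translations), and bijectivity is checked by comparing the filtrations by ordinary order of vanishing and invoking the privileged coordinates of Lemma~\ref{lemma-local-coords2}. Your route is the elementary, hands-on version of the same computation: Hadamard's lemma gives the monomials $x^\alpha$, $|\alpha|=q$, as a free $C^\infty(M)$-basis of $I_q/I_{q+1}$ (with the differential-operator pairing certifying freeness), reduction modulo $\mathfrak m_m$ yields the polynomial algebra on the conormal space, and the intrinsic description $\langle a\rangle_1 \mapsto [X_m \mapsto X_m(a)]$ is checked to kill exactly $I_2 + \mathfrak m_m I_1$. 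What your argument buys is a self-contained proof of the special case with no group-theoretic overhead; what the paper's approach buys is that the identical argument survives to the filtered setting, where the fiber is a nonabelian homogeneous space $\H_m/\G_m$ and there is no comparably simple monomial description of the product. Two small points of hygiene: uniqueness should be attributed to the fact that the \emph{domain} $A_{0,m}(V,M)$ is generated over $\R$ by its degree-one part (which your basis computation establishes), and the reduction to a single coordinate chart is justified because bump functions place everything supported away from $m$ inside the ideal generated by $\mathfrak m_m$; both are routine and implicit in your sketch.
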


\begin{remark}
We shall prove a more general result in  Theorem~\ref{thm-filtered-A-zero-fiber}.
\end{remark}

Returning to the deformation space,   the above considerations  identify the fibers of \eqref{eq-R-submersion}  with $V$ when $\lambda {\ne} 0$, and with the normal bundle $N_VM$ when $\lambda{=}0$.  We obtain the description \eqref{eq-def-space-descr1}, as required.  As for the topology on $\N_VM$, since $A(V,M)$ is generated by:
\begin{enumerate}[\rm (a)]
\item  the element $t\in A(V,M)$, 
\item  the functions  $a\cdot t^0 \in A(V,M)$, where $a\in C^\infty (V)$, and 
\item   monomials  $a \cdot t^{-1}\in A(V,M)$, where $a$ vanishes on $M$.
\end{enumerate}
we find that the topology on $\N_VM$, viewed as a spectrum, agrees with the topology we described earlier.

\begin{theorem}
\label{thm-def-space-mfld}
The deformation space $\defN_VM$ is a smooth manifold.
\end{theorem}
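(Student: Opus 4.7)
The plan is to apply Lemma~\ref{lemma-mfld-characterization} directly, producing around each point of $\N_VM$ an open neighborhood and $\dim V {+} 1$ elements of $A(V,M)$ that smoothly generate $\S_A$ and yield a chart. Two cases arise, according to whether the chosen point lies over $\lambda \neq 0$ or $\lambda = 0$ under the submersion $\widehat{t}$.

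A point over $\lambda \neq 0$ corresponds to some $v \in V$. I would pick any local coordinates $z_1, \ldots, z_{\dim V}$ on $V$ near $v$ and take as generators $z_1, \ldots, z_{\dim V}$ together with $t$. On any neighborhood where $\widehat{t}$ is bounded away from $0$, an element $a \cdot t^{-q}$ evaluates at $(v', \lambda')$ to $a(v')/\lambda'^{\,q}$, which is a smooth function of the generators; the chart property is immediate.

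For a point $(X_m, 0)$ of the zero fiber I would choose local coordinates on $V$ of the form $(x_1, \ldots, x_k, y_1, \ldots, y_l)$, with $k$ the codimension of $M$, the $x_i$ cutting out $M$ and the $y_j$ restricting to local coordinates on $M$. The candidate generators in $A(V,M)$ are
\[
y_1, \ldots, y_l, \quad x_1 t^{-1}, \ldots, x_k t^{-1}, \quad t,
\]
a total of $\dim V {+} 1$ elements. Smooth generation is the central technical step, to be established using the standard consequence of Taylor's theorem that any $a \in C^\infty(V)$ vanishing to order $q$ on $M$ may be written locally as $a = \sum_{|\alpha|=q} x^\alpha b_\alpha(x,y)$ with $b_\alpha$ smooth. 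Since $x_i = t \cdot (x_i t^{-1})$ inside $A(V,M)$, one may then rewrite
\[
a \cdot t^{-q} = \sum_{|\alpha|=q} \prod_{i=1}^{k} (x_i t^{-1})^{\alpha_i} \cdot b_\alpha\bigl(t(x_1 t^{-1}), \ldots, t(x_k t^{-1}),\, y_1, \ldots, y_l\bigr),
\]
exhibiting $a \cdot t^{-q}$ as a smooth function of the chosen generators. Terms with $q \le 0$ are simpler, since $t^{-q}$ is then a non-negative power of $t$ and $a$ itself is smooth in the coordinates. By $\R$-linearity this covers all of $A(V,M)$.

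For condition (ii), I would take $\Omega$ to be the spectrum neighborhood where each generator lies in a small product box around its value at $(X_m, 0)$. The coordinate map
\[
\phi(\varphi) = \bigl(\widehat{y_j}(\varphi), \widehat{x_i t^{-1}}(\varphi), \widehat{t}(\varphi)\bigr)
\]
is continuous by definition of the spectrum topology, and a direct unwinding of Definition~\ref{def-rees-alg} together with Lemmas~\ref{lemma-assoc-graded-alg-comp1} and~\ref{lemma-assoc-graded-alg-comp2} gives a set-theoretic inverse: from the coordinates one recovers $v \in V$ (when $\lambda \neq 0$) via the values of $y$ and $x = \lambda \cdot (x t^{-1})$, and $(m, X_m) \in N_V M$ (when $\lambda = 0$) via $y$ and the components $X_m(x_i)$. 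Continuity of $\phi^{-1}$ follows from smooth generation. The main obstacle is the smooth generation step; the rest is bookkeeping. The key mechanism is the identity $x_i = t \cdot (x_i t^{-1})$, which allows one to substitute the generators back into the smooth remainder produced by the Taylor expansion.
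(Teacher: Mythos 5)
Your proposal is correct and follows essentially the same route as the paper: both apply Lemma~\ref{lemma-mfld-characterization} with the generators $t$, the tangential coordinates, and the normal coordinates divided by $t$, and both reduce smooth generation to the Taylor (Hadamard) expansion of functions vanishing to order $q$ on $M$ via the substitution $x_i = t\cdot(x_i t^{-1})$. The only difference is cosmetic: the paper dismisses the $\lambda \neq 0$ case as trivial and takes $\Lambda$ to contain all of $U \times \R^{\times}$, while you spell out both cases and the Taylor step in more detail.
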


\begin{proof}
We shall use Lemma~\ref{lemma-mfld-characterization}. The only nontrivial case is that of a character $\varphi$ in the fiber over $\lambda{=}0$, corresponding to a  normal vector $X_m$. Introduce smooth functions $x_1,\dots, x_n$  on  $V$ that are local coordinates in a neighborhood $U$ of  $m$ in $ V$, for which
\[
  M\cap U = \{ \, u\in U \,  : \, x_{k+1}(u) = \cdots = x_n(u) = 0 \, \}.
\] 
Now define $\Lambda\subseteq \N_VM$ to be the open set consisting of those  elements of the deformation space of the form $(u,\lambda)$ for $u\in U$ and $\lambda \ne 0$, or $(X_u,0)$ for $u\in M\cap U$. The elements
\begin{equation}
\label{eq-def-space-coords}
t, x_1 ,\, \dots\,,\, x_k ,\, x_{k+1}t^{-1},\, \dots \, ,\, x_{n}t^{-1} \in A(V,M)
\end{equation}
satisfy the hypotheses of Lemma~\ref{lemma-mfld-characterization}; if $W\subseteq \R^n$ is the image of $U$ under the coordinates $\{x_j\}$ on $V$, then  the homeomorphic  image of $\Lambda$ under the functions \eqref{eq-def-space-coords} is the open set  
\[
\bigl \{\, 
(\lambda, x_1,\dots, x_n) \, : \,  (x_1,\dots, x_k, \lambda x_{k+1}, \dots, \lambda x_n) \in W
\,\bigr  \} 
\]
in $\R^{n+1}$; and  the smooth generation statement in the lemma follows from the Taylor expansion for smooth functions on $V$.
\end{proof}

\section{The Tangent Groupoid}
\label{sec-tangent-groupoid}

 In this section we shall briefly describe the special features of the   deformation space associated to  the diagonal embedding of a smooth manifold into its square. This is in preparation for Section~\ref{sec-tangent-groupoid-filtered} where a more complicated version of the same thing will be considered.

\begin{definition}
\label{def-tangent-groupoid}
Let $M$ be a smooth manifold.  The \emph{tangent groupoid} $\T M$ is the deformation space associated to the diagonal embedding of $M$ into $M{\times} M$.
\end{definition}

The name \emph{tangent groupoid}  is due to  Connes, who explained the importance  of the tangent groupoid in index theory.  See \cite[Chapter 2, Section 5]{Connes94}, and see   \cite{CarrilloRouse08} for more details concerning  the construction of the tangent groupoid  using smooth manifold techniques. 

As the name promises, $\T M$ is not only a smooth manifold but a Lie groupoid (see \cite[Chapter 5]{MoerdijkMrcun03} for background information on Lie group\-oids).  The source, target and other structure maps are all obtained from the following functoriality property of the deformation space construction: from a commutative diagram of smooth manifolds and submanifolds 
\[
\xymatrix@C=40pt{
M_1 \ar[r] \ar[d] & M_2 \ar[d]\\
V_1 \ar[r] & V_2
}
\] 
(where the horizontal arrows are any smooth maps) we obtain a smooth map $\N _{V_1} M_1 \to \N_{V_2} M_2$. Moreover if the horizontal arrows are  submersions, then so is the map of deformation spaces.

In the case at hand, think of $M$ as diagonally embedded in   $M{\times}M$ and $M {\times} M {\times} M$, and note that the deformation space for  the identity embedding of $M$ in itself is simply $M{\times} \R$. The first and second coordinate projections 
\[
\xymatrix@C=40pt{
M \ar@{=}[r]  \ar[d]    \ar[d] & M\ar[d] \\
M {\times } M \ar@<-.3ex>[r]_-{\pi_2} \ar@<.3ex>[r]^-{\pi_1}& M 
}
\]
determine  target and source maps 
\[
\xymatrix@C=40pt{
\T M  \ar@<-.3ex>[r]_-{t} \ar@<.3ex>[r]^-{s} & M {\times}\R .
}
\]
The unit map is determined by the diagonal inclusion of $M$ into $M{\times}M$, and the  inverse map is determined by the flip map on $M{\times}M$. Finally the space of composable elements in $\T M$,  
\[
\T M^{(2)}= \bigl \{\,  (\gamma_1,\gamma_2 ) \in \T M \times   \T M :s(\gamma_1) = t(\gamma_2) \, \bigr\}
\]
is the deformation space for the diagonal embedding of $M$ into $M{\times}M {\times} M$, while  the projection 
\[
\xymatrix{ 
M{\times}M {\times} M \ar[r] & M{\times} M
}
\]
onto the first and third factors gives the composition law for   $\T M$.  

All these maps are easy to compute explicitly in terms of the description \eqref{eq-def-space-descr1} of the deformation space.  The part of  $\T M$ over  each $\lambda \in \R$ is a subgroupoid, and when $\lambda \ne 0$ we obviously obtain a copy of the pair groupoid of $M$.  When $\lambda =0$ we obtain the tangent bundle $TM$, viewed as a bundle of abelian Lie groups over $M$; this   computation will be carried out in a more general context in Section~\ref{sec-tangent-groupoid-filtered}.

\section{Vector Fields on the Deformation Space}
\label{sec-vector-fields-deformation-space}

In this section we shall give a proof of Theorem~\ref{thm-blm} (the theorem of Bursztyn, Lima and Meinrenken) using vector fields on the deformation space.     First we shall prove the \emph{existence} of compatible  tubular neighborhood embeddings:   

\begin{theorem}
\label{thm-tubular-constr}
If $E$ is an Euler-like vector field for the inclusion of $M$ into $V$, then there is a  tubular neighborhood diffeomorphism
\[
\Phi \colon  N_VM \longrightarrow V
\]
\textup{(}defined on a neighborhood of the zero section\textup{)}  that carries the Euler vector field on the normal bundle to the germ of $E$ near $M$.
\end{theorem}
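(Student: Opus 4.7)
The plan is to carry out the algebraic strategy sketched in the introduction: build three derivations $\boldsymbol{E}$, $\boldsymbol{C}$, $\boldsymbol{T}$ of the Rees algebra $A(V,M)$, upgrade them to smooth vector fields on the deformation space $\defN_VM$, and then take $\Phi$ to be the time-one flow of $\boldsymbol{T}$ restricted to the zero fiber.

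I would begin by defining, on generators of $A(V,M)$,
\[
\boldsymbol{E}(a t^{-q}) = E(a)\, t^{-q}, \qquad \boldsymbol{C}(a t^{-q}) = -q\cdot a\, t^{-q},
\]
and extending by the Leibniz rule. The Euler-like hypothesis gives $E(a)-qa\in I_{q+1}(V,M)$ whenever $a\in I_q(V,M)$; in particular $E$ preserves each $I_q(V,M)$, so $\boldsymbol{E}$ is well-defined, and moreover $(\boldsymbol{C}+\boldsymbol{E})(a t^{-q})=(E(a)-qa)t^{-q}=t\cdot (E(a)-qa)t^{-q-1}$ lies in $t\cdot A(V,M)$. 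Because $t$ is not a zero-divisor in the Rees algebra, the equation $t\cdot \boldsymbol{T}=\boldsymbol{C}+\boldsymbol{E}$ unambiguously defines a derivation $\boldsymbol{T}$, and $\boldsymbol{T}(t)=1$ follows directly. On the dense open set $V\times \R^\times\subseteq \defN_VM$ these three derivations become, respectively, the classical smooth vector fields $E$, $\lambda\partial_\lambda$, and $\partial_\lambda+\lambda^{-1}E$; applying Lemmas~\ref{lemma-smooth-extension1} and~\ref{lemma-smooth-extension2} identifies $\boldsymbol{E}$, $\boldsymbol{C}$, $\boldsymbol{T}$ with genuine smooth vector fields on all of $\defN_VM$.

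Two algebraic observations are now central. First, $[\boldsymbol{C},\boldsymbol{E}]=0$ because both derivations respect the $\Z$-grading of $A(V,M)$; hence $t\cdot [\boldsymbol{T},\boldsymbol{E}]=[\boldsymbol{C}+\boldsymbol{E},\boldsymbol{E}]=0$, and cancelling $t$ yields the key commutation relation $[\boldsymbol{T},\boldsymbol{E}]=0$. Second, $\boldsymbol{T}$ is tangent to $M\times \R\subseteq \defN_VM$: its vanishing ideal is generated by $I_1(V,M)$ and $I_1(V,M)\cdot t^{-1}$, and on generators $\boldsymbol{T}(a)=E(a)t^{-1}\in I_1\cdot t^{-1}$ (using $E(C^\infty(V))\subseteq I_1(V,M)$, itself forced by the Euler-like condition), while $\boldsymbol{T}(at^{-1})=(E(a)-a)t^{-2}$ lies in $I_2\cdot t^{-2}$, hence in the square of the vanishing ideal. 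Together with $\boldsymbol{T}(t)=1$, tangency ensures that the flow $\Psi_s$ of $\boldsymbol{T}$ satisfies $\Psi_s(m,0)=(m,s)$ for all $s$, and standard ODE theory delivers an open neighborhood $U$ of the zero section of $N_VM$ on which $\Psi_s$ is defined for all $s\in[0,1]$. Set $\Phi\colon U\to V$ by $\Phi(X_m)=\Psi_1(X_m,0)$, which lands in the fiber $V\times\{1\}\cong V$ because $\widehat t\circ \Psi_s=\widehat t+s$.

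The three required properties now follow almost formally. The identity $\Phi|_M=\mathrm{id}_M$ is immediate from the tangency statement above. The intertwining of the Euler vector field on $N_VM$ with $E$ on $V$ follows from $[\boldsymbol{T},\boldsymbol{E}]=0$, since $\boldsymbol{E}$ restricts by Lemma~\ref{lemma-assoc-graded-alg-comp2} to the Euler vector field on the zero fiber and to $E$ on the fiber $V\times\{1\}$. Finally, the tubular-neighborhood condition that $D\Phi$ induces the identity on vertical tangent vectors reduces, for each $a\in I_1(V,M)$, to showing that $\Phi^* a$ agrees with $\widehat{at^{-1}}$ to first order at $0_m$; writing $a=t\cdot (at^{-1})$ and using $\Psi_1^* t=t+1$ this becomes an assertion about $\Psi_1^*(at^{-1})$, which follows from the flow identity $\Psi_1^*(at^{-1})-at^{-1}=\int_0^1 \Psi_s^*\bigl(\boldsymbol{T}(at^{-1})\bigr)\,ds$ together with $\boldsymbol{T}(at^{-1})\in I_2\cdot t^{-2}$ and the fact that functions in $I_2$ have vanishing first-order jet at points of $M$. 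The main obstacle in this plan is verifying that the algebraically-defined derivations really do yield smooth vector fields near the zero fiber of $\defN_VM$ via the criterion of Section~\ref{sec-deformation}; once this smoothness is in hand, every remaining step is forced by the algebraic identities.
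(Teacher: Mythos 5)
Your proposal is correct and follows the same overall strategy as Section~\ref{sec-vector-fields-deformation-space}: the same three derivations $\boldsymbol{E}$, $\boldsymbol{C}$, $\boldsymbol{T}$ of $A(V,M)$, the same smoothing via Lemmas~\ref{lemma-smooth-extension1} and~\ref{lemma-smooth-extension2}, and the tubular neighborhood obtained by flowing the zero fiber along $\boldsymbol{T}$. You diverge from the paper in two technical respects, both legitimately. First, the paper flags the worry that $\varphi_s$ for a \emph{fixed} $s$ might not be defined on a neighborhood of the whole zero section, and resolves it with the rescaling identity $\varphi_{e^ts}(X)=\varphi_s(e^tX)$ coming from $[\boldsymbol{T},\boldsymbol{C}]=\boldsymbol{T}$ (Lemma~\ref{lemma-integrated-bracket-relation}), setting $\Phi(X_m)=\varphi_{s(m)}(s(m)^{-1}X_m)$; you instead go straight to the time-one map, which works because $\boldsymbol{T}$ is tangent to the closed submanifold $M\times\R$ and restricts there to $\partial/\partial\lambda$, so the integral curve through each $(0_m,0)$ exists for all time and openness of the flow domain plus compactness of $[0,1]$ gives the required neighborhood --- you should say this explicitly rather than invoking ``standard ODE theory,'' since this is exactly the point the paper goes out of its way to handle, but the argument is sound and lets you drop $[\boldsymbol{T},\boldsymbol{C}]=\boldsymbol{T}$ entirely. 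Second, for the normal-derivative condition (1.\ref{tubular-nbd-cond}) the paper derives the singular ODE $\tfrac{d}{ds}\varphi_s^*=s^{-1}\varphi_s^*$ and evaluates the limit as $s\to 0$ (Lemmas~\ref{lem-varphi-s-derivative} and~\ref{lem-normal-derivative-of-phi}), whereas you integrate $\Psi_1^*(at^{-1})-at^{-1}=\int_0^1\Psi_s^*\bigl(\boldsymbol{T}(at^{-1})\bigr)\,ds$ and use $\boldsymbol{T}(at^{-1})=(E(a)-a)t^{-2}$ with $E(a)-a\in I_2(V,M)$; this is arguably cleaner since the integrand is smooth down to $s=0$ and no limit is needed. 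Minor points: the restriction of $\boldsymbol{E}$ to the zero fiber being the Euler vector field is the content of Lemma~\ref{lemma-char-euler-like}, not Lemma~\ref{lemma-assoc-graded-alg-comp2}; your claim $E(C^\infty(V))\subseteq I_1(V,M)$ does follow from the Euler-like condition but deserves a line of justification (it is equivalent to $E$ vanishing at points of $M$); and, as in the paper, injectivity of $\Phi$ on a possibly smaller neighborhood is left to the usual shrinking argument.
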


The first step in our proof is to  construct from $E$  a vector field on the deformation space.  To start, let us denote by $\boldsymbol{E}$  the vector field on 
 $V{\times}\R^{\times}$  that is tangent to the fibers of the projection map $V{\times}\R^{\times}\to \R^{\times}$, and that is a copy of $E$ on each fiber $V{\times}\{\lambda\}$.
 
\begin{lemma}
\label{lemma-char-euler-like}
If $E$ is Euler-like, then the  vector field $\boldsymbol{E}$ above extends uniquely to a vector field on $\N_VM$. The extension is   tangent to the fibers of the projection $\N _V M \to \R$, and the restriction to the fiber over $0\in \R$ is the Euler vector field on the normal vector bundle  $N_VM$.
\end{lemma}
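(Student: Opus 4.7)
My plan is to construct $\boldsymbol{E}$ algebraically as a derivation of the Rees algebra $A(V,M)$ and then invoke Lemmas \ref{lemma-smooth-extension1} and \ref{lemma-smooth-extension2} to convert it into a vector field on $\N_VM$. Define
$$
\boldsymbol{E}\Bigl(\sum_q a_q\,t^{-q}\Bigr) \;:=\; \sum_q E(a_q)\,t^{-q}.
$$
The essential check is that this lands back in $A(V,M)$: for $q\ge 1$ and $a_q\in I_q(V,M)$, the Euler-like condition gives $E(a_q)=q\cdot a_q + r$ with $r\in I_{q+1}(V,M)\subseteq I_q(V,M)$, so $E(a_q)\in I_q(V,M)$. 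That $\boldsymbol{E}$ is a derivation follows from the Leibniz rule for $E$ acting coefficient-wise, and $\boldsymbol{E}(t)=0$ by construction, so the vector field we produce will be tangent to the fibers of $\widehat t\colon \N_VM \to \R$.

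Next I promote this derivation to a smooth vector field. On the open set $V\times\R^\times\subseteq \N_VM$ the sheaf $\S_A$ is identified with the usual sheaf of smooth functions, and a direct calculation shows that the algebraic action of $\boldsymbol{E}$ agrees there with the action of the honest vector field that restricts to a copy of $E$ on each fiber $V\times\{\lambda\}$. In particular the smoothness condition \eqref{eq-X-and-smooth-calc} of Definition \ref{def-smooth-gen} holds over $V\times \R^\times$ by the ordinary chain rule for $E$. Since the complementary fiber over $0$ has empty interior in $\N_VM$, Lemma \ref{lemma-smooth-extension2} upgrades smoothness to the entire spectrum, and Lemma \ref{lemma-smooth-extension1} then produces a unique vector field on $\N_VM$ compatible with $\boldsymbol{E}$; this vector field manifestly extends the original $\boldsymbol{E}$ on $V\times\R^\times$, and uniqueness of any extension follows from the density of $V\times\R^\times$.

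It remains to identify the restriction to the fiber over $0$. Since $\boldsymbol{E}(t)=0$ the derivation descends along the quotient $A(V,M)\twoheadrightarrow A_0(V,M)\cong \bigoplus_q I_q(V,M)/I_{q+1}(V,M)$ of Lemma \ref{lemma-assoc-graded-alg-comp1}. Applying the Euler-like identity once more,
$$
\boldsymbol{E}\bigl(\langle a\rangle_q\bigr) \;=\; \langle E(a)\rangle_q \;=\; \langle q\cdot a + r\rangle_q \;=\; q\cdot\langle a\rangle_q,
$$
because $r\in I_{q+1}(V,M)$. Under the identification of Lemma \ref{lemma-assoc-graded-alg-comp2} the degree-$q$ summand of each $A_{0,m}(V,M)$ is exactly the space of homogeneous polynomials of degree $q$ on $T_mV/T_mM$, so by the characterization \eqref{eq-euler-polynomials} the induced derivation is precisely the Euler vector field of the normal bundle $N_VM$.

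In summary there is no serious obstacle: the argument is entirely driven by the observation that the Euler-like hypothesis is \emph{exactly} the statement that $E$ preserves the filtration $\{I_q(V,M)\}$ and acts as multiplication by $q$ on the associated graded. Both conclusions of the lemma then flow directly out of the algebraic formalism of Section 2, and the role of Lemmas \ref{lemma-smooth-extension1} and \ref{lemma-smooth-extension2} is to package the density of $V\times\R^\times$ in $\N_VM$ into a smooth-extension principle.
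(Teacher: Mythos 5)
Your proposal is correct and follows essentially the same route as the paper: both define the derivation $\sum a_q t^{-q}\mapsto \sum E(a_q)t^{-q}$ of $A(V,M)$, invoke Lemmas~\ref{lemma-smooth-extension1} and~\ref{lemma-smooth-extension2} together with the density of $V\times\R^{\times}$ to obtain the unique smooth extension, and read off the zero-fiber restriction from the Euler-like identity on the associated graded algebra. You simply spell out in more detail the steps the paper leaves to the reader (the check that $E$ preserves the ideals $I_q(V,M)$ and the identification of the induced derivation on $A_0(V,M)$ with the Euler vector field via \eqref{eq-euler-polynomials}).
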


\begin{proof}
The extension is unique, if it exists, because $V {\times} \R^{\times}$ is dense in $\N _V M$.
The existence of the extension, and its properties, are easy to check in local coordinates.  Alternatively,
if $E$ is Euler-like, then, since $E$ preserves the order of vanishing of functions on $M$, the formula 
\begin{equation*}
  \sum a_q t^{-q} \longmapsto  \sum  E (a_q) t^{-q} 
\end{equation*}
defines a derivation of $A(V,M)$ that is compatible, in the sense of Definition~\ref{def-smooth-X}, with the vector field $\boldsymbol{E}$ on $V{\times}\R^{\times}$.   We therefore obtain a smooth extension of  $\boldsymbol{E}$  from Lemmas~\ref{lemma-smooth-extension1} and~\ref{lemma-smooth-extension2}.  It follows from the definition of an Euler-like vector field that the restriction  of this smooth extension to $N_VM$ is the Euler vector field.
\end{proof}

\begin{remark} 
The lemma actually \emph{characterizes} Euler-like vector fields:  if  $X$ is a vector field on $V$, and if  the extension to a vector field $\boldsymbol{X}$ to $V{\times}\R^{\times}$, as above, further extends  a vector field  on $\N_VM$ that then restricts to the Euler vector field on the vector bundle $N_VM$, then $X$ is   Euler-like.
\end{remark}

Next we shall introduce a canonical vector field on the deformation space.

\begin{lemma}
The formula 
\[
\gamma _s \colon \left \{ 
		\begin{aligned}  (v,\lambda) & \longmapsto (v, e^s\lambda) \\
					(X,0) & \longmapsto (e^{-s}X,0)\end{aligned}\right .
\]
defines a smooth action of the Lie group $\R$ on the deformation space $\N _VM$.
\end{lemma}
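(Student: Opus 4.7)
The plan is to exploit the algebraic description $\N_V M = \operatorname{Spectrum}(A(V,M))$ and define the action by dualizing a natural scaling automorphism of the Rees algebra. For each $s\in\R$, I would introduce
\[
\gamma_s^\sharp \colon A(V,M) \longrightarrow A(V,M), \qquad \gamma_s^\sharp\Bigl(\sum_q a_q t^{-q}\Bigr) = \sum_q e^{-qs} a_q t^{-q}.
\]
This is well-defined (each summand retains the same vanishing order on $M$), multiplicative (because $e^{-(p+q)s}=e^{-ps}e^{-qs}$), and satisfies $\gamma_s^\sharp\circ\gamma_{s'}^\sharp=\gamma_{s+s'}^\sharp$, so $s\mapsto\gamma_s^\sharp$ is a group homomorphism from $(\R,+)$ into $\operatorname{Aut}(A(V,M))$.

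Passing to spectra via $\gamma_s(\varphi)=\varphi\circ\gamma_s^\sharp$ would then yield a group action of $\R$ on $\N_V M$ whose individual maps $\gamma_s$ are automatically smooth: since $\widehat b\circ\gamma_s=\widehat{\gamma_s^\sharp(b)}$ for every $b\in A(V,M)$ and these functions generate $\S_{A(V,M)}$, smoothness of $\gamma_s$ follows from the fact that smoothness of a map into a manifold can be tested on generators of the structure sheaf. To see that $\gamma_s$ agrees with the stated formulas I would restrict to each fiber of $\widehat t$: for $\lambda\neq 0$, the character $(v,\lambda)$ sends $a_q t^{-q}$ to $a_q(v)\lambda^{-q}$, so the identity $(v,\lambda)\bigl(\gamma_s^\sharp(a_q t^{-q})\bigr)=e^{-qs}a_q(v)\lambda^{-q}=a_q(v)(e^s\lambda)^{-q}$ identifies $\gamma_s(v,\lambda)$ with $(v,e^s\lambda)$; on the zero fiber over $m\in M$, the identification of Lemma~\ref{lemma-assoc-graded-alg-comp2} turns $\gamma_s^\sharp$ into scaling of degree-$q$ polynomials on $T_m V/T_m M$ by $e^{-qs}$, which by homogeneity is pullback along $X_m\mapsto e^{-s}X_m$.

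Finally, I would establish joint smoothness of $(s,\varphi)\mapsto\gamma_s(\varphi)$ on $\R\times\N_V M$. By Lemma~\ref{lemma-mfld-characterization} applied to $\N_V M$, it suffices to check that $\widehat b(\gamma_s(\varphi))$ is smooth in $(s,\varphi)$ for every $b\in A(V,M)$. Writing $b=\sum_q a_q t^{-q}$, this is the finite sum $\sum_q e^{-qs}\widehat{a_q t^{-q}}(\varphi)$, which is manifestly smooth in both variables. I do not anticipate any serious obstacle; the only point requiring care is the duality between the algebra automorphism $\gamma_s^\sharp$ and the induced self-map of the spectrum, which is what forces the opposite exponents $e^s$ and $e^{-s}$ to appear on generic fibers and on the zero fiber, respectively.
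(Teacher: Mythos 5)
Your proof is correct and follows essentially the same route as the paper: the paper encodes your family of automorphisms $\gamma_s^\sharp$ all at once as the coaction $\gamma\colon A(V,M)\to A(V,M)\otimes_\R C^\infty(\R)$, $\sum a_q t^{-q}\mapsto \sum a_q t^{-q}\otimes e^{-sq}$, which is exactly your scaling map together with your joint-smoothness observation packaged into a single algebra morphism. Your fiberwise identification of the action and the verification via generators of $\S_{A(V,M)}$ fill in details the paper leaves to the reader.
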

 
 \begin{proof}
 This is again easy to check directly in the local coordinates of Theorem~\ref{thm-def-space-mfld}. From the algebraic point of view, it suffices to note  that the geometric flow is associated to the morphism
  \[
 \gamma \colon A(V,M) \longrightarrow A(V,M)\otimes _{\R} C^\infty(\R) 
 \]
 defined by the formula
 \[
 \gamma \colon \sum a_{q}t^{-q} \longmapsto  \sum a_q t^{-q}\otimes e^{-tq}
 \]
 (the tensor product here is the ordinary algebraic tensor product).
 \end{proof}

\begin{definition}
We shall denote by $\boldsymbol{C}$ the vector field on $\N_VM$ that generates the flow   $\{ \gamma_s\}$ above.   Note that 
$\boldsymbol{C}$ restricts to the vector field 
$
 \lambda  \cdot {\partial} /{\partial \lambda} ,
$
on $V {\times} \R^{\times}$, while on the zero fiber of $\N_VM$  it agrees with the negative of the Euler vector field on the normal bundle.\end{definition}

Now we shall combine $\boldsymbol{E}$ with $\boldsymbol{C}$ to obtain a new vector field  $\boldsymbol{T}$ on $\N_VM$.

\begin{lemma}
\label{lemma-T-field}
 Let $E$ be an Euler-like vector field for the inclusion of $M$ into $V$, and let $\boldsymbol{E}$ be the associated vector field on $\N_VM$, as in Lemma~\textup{\ref{lemma-char-euler-like}}.  The vector field
\[
\boldsymbol{T} =  {\lambda}^{-1}   \boldsymbol{E} + \frac{\partial} {\partial \lambda}
\]
on the open subset $V {\times} \R^{\times}\subseteq \N_V M$ extends to a \textup{(}smooth\textup{)} vector field on $\N_VM$ with 
\[
 \lambda \cdot \boldsymbol{T}   = \boldsymbol{C}  + \boldsymbol{E} .
\]
\end{lemma}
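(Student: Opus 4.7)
My plan is to construct the derivation $\boldsymbol{T}$ directly at the level of the Rees algebra $A(V,M)$ of Definition~\ref{def-rees-alg}, and then appeal to Lemmas~\ref{lemma-smooth-extension1} and~\ref{lemma-smooth-extension2} to pass from a derivation to a smooth vector field.  Two derivations of $A(V,M)$ are already in play.  From (the proof of) Lemma~\ref{lemma-char-euler-like}, $\boldsymbol{E}$ is the derivation $a t^{-q} \mapsto E(a) t^{-q}$; from the algebraic description of the flow $\gamma_s$, the canonical vector field $\boldsymbol{C}$ is the derivation $a t^{-q}\mapsto -q\, a t^{-q}$.  Their sum satisfies
\[
(\boldsymbol{C}+\boldsymbol{E})(a t^{-q}) = \bigl( E(a) - q a \bigr) t^{-q},
\]
so the whole game is to show that this expression is divisible by $t$ in $A(V,M)$.

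\smallskip
\noindent\textbf{Step 1: $(\boldsymbol{C}+\boldsymbol{E})$ lands in $t\cdot A(V,M)$.}  For a monomial $a_q t^{-q}$ with $q\ge 1$ this is immediate from the Euler-like condition, which tells us precisely that $E(a_q)-qa_q$ lies in $I_{q+1}(V,M)$, hence $(E(a_q)-qa_q)t^{-q-1}\in A(V,M)$.  For $q\le -1$ there is nothing to prove, because $t^{-q}$ is already divisible by $t$.  The subtle case is $q=0$, where I need $E(a)\in I_1(V,M)$ for every $a\in C^\infty (V)$; equivalently, that $E$ vanishes on $M$.  I will extract this from the Euler-like hypothesis by a short local argument: if $f\in I_1$, expanding $E(af)=af+(\text{mod } I_2)$ and $E(f)=f+(\text{mod } I_2)$ inside the Leibniz identity $E(af)=E(a)f+aE(f)$ yields $E(a)\cdot f\in I_2$, and letting $f$ run through local defining functions of $M$ forces $E(a)|_M=0$.

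\smallskip
\noindent\textbf{Step 2: divide by $t$.}  The element $t\in A(V,M)$ is not a zero divisor (inspecting Laurent-polynomial coefficients, $tb=0$ forces every coefficient of $b$ to vanish).  Combined with Step~1, this shows that there is a unique $\R$-linear map $\boldsymbol{T}\colon A(V,M)\to A(V,M)$ with $t\cdot \boldsymbol{T}(x)=(\boldsymbol{C}+\boldsymbol{E})(x)$, and the Leibniz rule for $\boldsymbol{T}$ then follows from the Leibniz rule for $\boldsymbol{C}+\boldsymbol{E}$ together with the cancellation property of $t$.  So $\boldsymbol{T}$ is a derivation of $A(V,M)$, and by construction $\lambda\cdot \boldsymbol{T}=\boldsymbol{C}+\boldsymbol{E}$.

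\smallskip
\noindent\textbf{Step 3: smoothness.}  On $V\times \R^{\times}$, where $t$ is invertible, $\boldsymbol{T}$ is literally the vector field $\lambda^{-1}\boldsymbol{E}+\partial/\partial\lambda$, which is smooth.  Hence $\boldsymbol{T}$ is smooth, in the sense of Definition~\ref{def-smooth-gen}, over the open dense subset $V\times \R^{\times}$ of $\N_VM$.  Lemma~\ref{lemma-smooth-extension2} upgrades this to smoothness over the entire spectrum, and Lemma~\ref{lemma-smooth-extension1} then produces a (unique) smooth vector field on $\N_VM$ compatible with $\boldsymbol{T}$.  That vector field extends $\lambda^{-1}\boldsymbol{E}+\partial/\partial\lambda$ and satisfies $\lambda \boldsymbol{T}=\boldsymbol{C}+\boldsymbol{E}$, as required.

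The main obstacle, and the only place the Euler-like hypothesis is genuinely used, is the $q=0$ case of Step~1; once one knows that $E$ vanishes on $M$, the rest is essentially formal manipulation in $A(V,M)$ plus an invocation of the two smoothness lemmas.
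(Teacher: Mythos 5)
Your proof is correct and follows essentially the same route as the paper's: both realize $\boldsymbol{T}$ as the derivation $\sum a_q t^{-q}\mapsto\sum\bigl(E(a_q)-qa_q\bigr)t^{-(q+1)}$ of $A(V,M)$ (your ``divide by $t$'' step is just this formula) and then pass to a vector field via Lemmas~\ref{lemma-smooth-extension1} and~\ref{lemma-smooth-extension2} over the dense open set $V{\times}\R^{\times}$. The only difference is that you make explicit the $q=0$ verification that $E(a)\in I_1(V,M)$ for every smooth $a$ --- i.e.\ that $E$ vanishes along $M$ --- which the paper's proof leaves implicit; your Leibniz argument for this is sound wherever $M$ has positive codimension.
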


\begin{proof} 
If $E$ is Euler-like, then the formula 
\begin{equation*}
  \sum a_q t^{-q} \longmapsto  \sum (E (a_q) - q a_q)t^{-(q+1)} ,
\end{equation*}
defines a derivation of $A(V,M)$.  The derivation is compatible with the vector field $\boldsymbol{T}$ over the open set  $V {\times} \R^{\times}\subseteq \N_V M$, and since the complement of this open set has  empty interior it follows from Lemma~\ref{lemma-smooth-extension2} that  the derivation is compatible (in the sense of Definition~\ref{def-smooth-X}) with a unique vector field on all of $\N_VM$.
\end{proof}
 
 It is clear from  its definition that the vector field $\boldsymbol{T}$ on $\N_VM$ is $\widehat t$-related to the vector field $d/d\lambda$  on $\R$ (recall from \eqref{eq-R-submersion}   that $\widehat{t}$ is the natural projection from $\N _VM$ to $\R$). As a result,  the time  $t{=}1$ flow map for the vector field $\boldsymbol{T}$  maps the $\lambda {= }0$ fiber $N_VM\subseteq \N_VM$  to the $\lambda{=}1$ fiber $M\subseteq V$ (although we   need to be a bit careful about the domain of definition of the flow map).  We shall show  that  this fiber mapping  is a tubular neighborhood, and that  it carries the Euler vector field on the normal bundle to the Euler-like vector field $E$.

\begin{definition}
\label{def-of-tau-and-phi}
Denote by  $\{ \tau_s\}$ the local flow on $\N_V M$ associated to the vector field $\boldsymbol{T}$  in Lemma~\ref{lemma-T-field}.  
\end{definition}

Recall that the maps $\tau_s$  assemble into a smooth map
\[
\tau \colon \R \times \N_V M \longrightarrow \N_V M
\]
that is defined   on some   neighborhood of $\{0\}{\times} \N_V M$ in $\R{\times} \N_V M$, such that 
\begin{equation*}
\boldsymbol{T}  (f)(w) = \frac{d}{ds}\Big \vert _{s=0} f (\tau_s(w))
\end{equation*}
for all smooth functions $f$ on $ \N_V M$ and all $w\in  \N_V M$, and 
\begin{equation*}
\tau_{s+t}(w) = \tau_s(\tau_t(w))
\end{equation*}
in a neighborhood of $\{ 0\} {\times} \{0\} {\times} \N_V M$ in $\R{\times} \R {\times} \N_V M$. In all these formulas, we are writing $\tau_s(w) = \tau(s,w)$.   For $s\ne 0$, then we shall write the restriction of the flow $\tau_s$ to the fiber of $N_VM$ over $\lambda{=}0$ in the form
\[
\N_VM  \ni (X,0)  \stackrel {\tau_s} \longmapsto (\varphi_s(X), s) \in \N_V M. 
 \]
For any open subset $U\subseteq N_VM$ with compact closure, and all sufficiently small $|s|$, the map $\varphi_s$ is a diffeomorphism
from $U$ to an open subset of $V$.

  \begin{lemma}
  \label{lem-varphi-s-derivative}
Let $f$ be a smooth function on $V$ that vanishes on $M$. There is a smooth function $h\colon V \to \R$ that vanishes to order $2$ such that 
 \[
  \frac{d}{ds}     f (\varphi_s (X_m))   =  s^{-1} f(\varphi_s(X_m)) + s^{-1}h(\varphi_s(X_m)) 
 \]
 for every $X_m\in N_VM$ and all sufficiently small $|s|$.
 \end{lemma}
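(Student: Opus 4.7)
The plan is to lift $f$ to a globally defined smooth function on the deformation space and then interpret the desired derivative as the action of $\boldsymbol{T}$ on that function. Since $f$ vanishes on $M$, the Laurent polynomial $f \cdot t^0$ is a degree-zero element of $A(V,M)$, and so determines a smooth function $\widehat f$ on $\N_V M$. On the open subset $V\times \R^{\times}$ this function satisfies $\widehat f(v,\lambda)=f(v)$; in particular, for $s\ne 0$,
\[
\widehat f(\tau_s(X_m,0)) = \widehat f(\varphi_s(X_m), s) = f(\varphi_s(X_m)).
\]

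Next I would invoke the defining property of the flow associated to $\boldsymbol{T}$: for any smooth function $F$ on $\N_V M$ and any $w \in \N_V M$,
\[
\frac{d}{ds} F(\tau_s(w)) = \boldsymbol{T}(F)(\tau_s(w)) .
\]
Applied with $F=\widehat f$ and $w = (X_m,0)$, and noting that $\tau_s(X_m,0)=(\varphi_s(X_m),s)$ lies in $V\times \R^{\times}$ for all sufficiently small nonzero $s$, this lets me compute $\boldsymbol{T}(\widehat f)$ via the explicit formula $\boldsymbol{T} = \lambda^{-1}\boldsymbol{E} + \partial/\partial\lambda$ from Lemma~\ref{lemma-T-field}. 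Because $\widehat f$ is constant in $\lambda$ on $V\times \R^{\times}$ and $\boldsymbol{E}$ restricts to $E$ on each slice $V\times\{\lambda\}$, I obtain
\[
\boldsymbol{T}(\widehat f)(v,\lambda) = \lambda^{-1} E(f)(v)
\]
on $V \times \R^{\times}$.

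Finally, since $f$ vanishes to order $1$ on $M$ and $E$ is Euler-like, the definition gives a smooth function $h$ on $V$ vanishing to order $2$ on $M$ such that $E(f) = f + h$. Substituting $(v,\lambda) = (\varphi_s(X_m), s)$ into the previous display and using the flow identity yields
\[
\frac{d}{ds} f(\varphi_s(X_m)) = s^{-1} f(\varphi_s(X_m)) + s^{-1} h(\varphi_s(X_m)),
\]
as required.

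I do not anticipate a serious obstacle here. The only conceptual point to check is that $\widehat f$ really is a smooth function on all of $\N_V M$ (so that the flow equation applies at the point $(X_m,0)$ on the zero fiber); this is immediate from the construction of $A(V,M)$. Everything else is a calculation that takes place on the dense open subset $V \times \R^{\times}$, where $\boldsymbol{T}$ is given by a transparent formula.
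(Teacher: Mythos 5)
Your proposal is correct and follows essentially the same route as the paper: lift $f$ to a smooth function on $\N_VM$ (the paper writes it as the composition $\N_VM\to V\times\R\to V\xrightarrow{f}\R$, you present it as the element $f\cdot t^0\in A(V,M)$ — these are the same function), apply the flow equation for $\boldsymbol{T}$, use $\boldsymbol{T}=\lambda^{-1}\boldsymbol{E}+\partial/\partial\lambda$ together with the $\lambda$-independence of the lift, and finish with $E(f)=f+h$ from the Euler-like property. No gaps.
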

 
 \begin{proof}
  Since $E$ is an Euler vector field, we can write 
 \[
 E(f) = f +h,
 \]
 where $h$ vanishes on $M$ to order $2$.
 Now define $\boldsymbol{f}$ to be the composition 
 \[
 \N_V M\longrightarrow V\times \R\longrightarrow V\stackrel f \longrightarrow \R .
 \]
 Then by definition of $\varphi_s$ and the flow $\tau_s$, 
 \[
   \frac{d}{ds}     f (\varphi_s (X_m))   =   \frac{d}{ds}     \boldsymbol{f} (\tau_s (X_m,0))  = \boldsymbol{T}_{\tau_s(X_m,0)}(\boldsymbol{f})
   \]
  But it  follows from the definition  of $\boldsymbol{T}$ that 
  \begin{multline*}
   \boldsymbol{T}_{\tau_s(X_m,0)}(\boldsymbol{f}) = s^{-1} \boldsymbol{E}_{(\varphi_s(X_m),s)}(\boldsymbol{f})  \\
   = s^{-1}  {E} (f)(\varphi_s(X_m))  = s^{-1} f(\varphi_s(X_m)) + s^{-1} h(\varphi_s(X_m)),
   \end{multline*}
   as required.
 \end{proof}
 
The map $\varphi_s$ takes the zero section $M\subseteq N_VM$ identically  to $M\subseteq V$, because 
  $\boldsymbol{T} $ restricts to  $\partial/\partial \lambda$ on the submanifold
$ M {\times} \R \subseteq \N_V M $.
So for every $m\in M$ and all sufficiently small $|s|$  the derivative of $\varphi_s$  induces a map 
\begin{equation}
\label{eq-normal-derivative-of-phi}
\varphi_{s,*} \colon  T_m V / T_m M \longrightarrow T_m V / T_m M
\end{equation}

\begin{lemma}
\label{lem-normal-derivative-of-phi}
The  mapping \textup{\eqref{eq-normal-derivative-of-phi}} is $s \cdot \operatorname{id}$.
\end{lemma}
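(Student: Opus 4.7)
The plan is to show, for each smooth function $f$ on $V$ that vanishes on $M$, the scalar identity $\varphi_{s,*}(X_m)(f) = s\cdot X_m(f)$. Since normal vectors in $N_{V,m}M$ are determined by their pairings with such $f$, this is equivalent to the lemma. Parametrising $X_m$ by the curve $t\mapsto tX_m$ in the fiber (with tangent $X_m$ at $t=0$) and using $\varphi_s(0_m) = m$, the left-hand side becomes the smooth function
\[
G(s) := \frac{d}{dt}\Big|_{t=0} f(\varphi_s(tX_m)).
\]

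First I would derive an ODE for $G$. Applying Lemma~\ref{lem-varphi-s-derivative} with $X_m$ replaced by $tX_m$, the function $F_t(s) := f(\varphi_s(tX_m))$ satisfies $s\,F_t'(s) = F_t(s) + H_t(s)$ with $H_t(s) = h(\varphi_s(tX_m))$ and $h$ vanishing to order $2$ on $M$. Differentiating in $t$ at $t=0$, the $H$-term contributes $dh_m(\cdot)$, which vanishes because $dh_m = 0$ on $M$. This leaves the homogeneous equation $sG'(s) = G(s)$ on $s\neq 0$, so $G(s)/s$ is locally constant on each side of $0$.

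Next I would identify the constant via the deformation space. The element $ft^{-1}\in A(V,M)$ defines a smooth function $g_f := \widehat{ft^{-1}}$ on $\N_VM$, equal to $\lambda^{-1}f$ on $V\times\{\lambda\}$ and to $(Y_m,0)\mapsto Y_m(f)$ on the zero fiber. The composition $\Psi(s,Y_m) := g_f(\tau_s(Y_m,0))$ is therefore smooth jointly in $(s,Y_m)$, and for $s\neq 0$ it equals $s^{-1} f(\varphi_s(Y_m))$. Consequently, for $s\neq 0$,
\[
\frac{G(s)}{s} = \frac{d}{dt}\Big|_{t=0}\Psi(s,tX_m),
\]
and the right-hand side extends smoothly across $s=0$ to $\frac{d}{dt}\big|_{t=0}\Psi(0,tX_m) = X_m(f)$, since $\Psi(0,\cdot)$ is fiberwise linear. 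So the constant equals $X_m(f)$, yielding $G(s) = s\cdot X_m(f)$, as required.

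The delicate step is the second: the ODE alone pins down $G(s)/s$ only up to a constant on each component of $s\neq 0$, so one has to bring in something external both to identify the value and to ensure the two one-sided constants agree. The smoothness of $g_f$ across the whole of $\N_VM$ (rather than only on $V\times\R^\times$) furnishes exactly this continuity, which is where the algebraic construction of $\N_VM$ as a spectrum carries its weight.
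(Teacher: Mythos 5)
Your proof is correct and follows essentially the same route as the paper's: both derive the differential equation $\frac{d}{ds}(\text{quantity}) = s^{-1}(\text{quantity})$ from Lemma~\ref{lem-varphi-s-derivative} (with the $h$-term dropping out because $h$ vanishes to second order), conclude that the quantity divided by $s$ is constant, and pin down the constant across $s=0$ via the smoothness of $\widehat{ft^{-1}}$ on the deformation space. The only cosmetic difference is that you track the scalar pairing $\varphi_{s,*}(X_m)(f)$ while the paper phrases the same computation in terms of the adjoint $\varphi_s^*$ on $(T_mV/T_mM)^*$.
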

\begin{proof}
 We  shall   calculate the linear algebraic adjoint  $\varphi_s^*$ of the linear transformation \eqref{eq-normal-derivative-of-phi}. The   vector space of smooth functions on $V$ that vanish to first order on $M$ surjects onto the vector space dual of $T_m V / T_m M$ via the usual pairing of functions and tangent vectors, and functions that vanish to second order are in the kernel of the surjection.  Applying Lemma~\ref{lem-varphi-s-derivative}  we find that 
 \[
\frac{d}{ds} \varphi_s^* = s^{-1} \varphi_s^*  \colon  (T_m V / T_m M)^* \longrightarrow (T_m V / T_m M)^* ,
 \]
 and so by calculus
$ s^{-1} \varphi^*_s $ is a constant family of linear maps.  To evaluate the constant we shall compute the limit of $s^{-1} \varphi^*_s $ as $s\to 0$.
The function $s\mapsto \tau_s(X_m,0)$ is a smooth curve in $\N_VM$, with value $(X_m,0)$ at $s=0$, and the function $ (v,s)\mapsto s^{-1}f(v)$ is smooth on $\N_VM$, with values $(X_m,0)\mapsto X_m(f)$ when $s=0$.  So
 \[
 \lim_{s\to 0}  s^{-1} f(\varphi_s(X_m))  =  X_m(f) .
 \]
 As a result, if $[f]$ denotes the class in $ (T_m V / T_m M)^*$ determined by $f$, namely 
 \[
 [f]\colon X_m \longmapsto X_m(f)
 \]
 then 
 \[
 \varphi^*_s ( [f])= [f\circ \varphi_s]  \colon X_m \longmapsto s \cdot X_m (f)
 \]
 and so 
$
 \varphi_s ^* = s \cdot \operatorname{id} $, as required. 
 \end{proof}

Lemma~\ref{lem-normal-derivative-of-phi} tells us that for any $s$    the map $X_m \mapsto \varphi_s(s^{-1} X_m)$ is a tubular neighborhood mapping on the domain where it is defined, but this  may not be a neighborhood of the full zero section of $N_VM$. To remedy this problem, we shall use  the  Lie bracket relations among $\boldsymbol{E}$, $\boldsymbol{C}$ and $\boldsymbol{T}$, which  are as follows:
 \begin{equation}
 \label{eq-ECT-relations}
[\boldsymbol{T}, \boldsymbol{C} ] = \boldsymbol{T}
 , \quad 
[\boldsymbol{T}, \boldsymbol{E} ] = 0
, \quad \text{and} \quad 
[\boldsymbol{C}, \boldsymbol{E} ] = 0 
 \end{equation} 
 (note that it suffices to verify these relations on the dense set $V{\times}\R^{\times}\subseteq \N_VM$).

 \begin{lemma}
\label{lemma-integrated-bracket-relation}
 If $K$ is a compact subset of $ N_VM$ and $k>0$, then there exists $\varepsilon >0$ so that 
 \[
  \varphi_{e^ts}(X) = \varphi_s (e^{t}X)
 \]
 for all $X\in K$, all $|t|< k$, and all  $s\in (-\varepsilon, \varepsilon)$.
 \end{lemma}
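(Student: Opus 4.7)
The plan is to derive the identity $\varphi_{e^t s}(X)=\varphi_s(e^t X)$ from the conjugation relation
\[
\gamma_t \circ \tau_s \circ \gamma_{-t} \;=\; \tau_{e^t s},
\]
which in turn comes from the bracket relation $[\boldsymbol{T},\boldsymbol{C}]=\boldsymbol{T}$ recorded in \eqref{eq-ECT-relations}. Granted this conjugation identity, the lemma drops out of a direct computation on the zero fiber. Using the explicit formula for $\gamma_t$, the left-hand side sends
\[
(X,0)\;\xrightarrow{\gamma_{-t}}\;(e^t X,0)\;\xrightarrow{\tau_s}\;(\varphi_s(e^t X),s)\;\xrightarrow{\gamma_t}\;(\varphi_s(e^t X),e^t s),
\]
while the right-hand side equals $(\varphi_{e^t s}(X),e^t s)$, so matching first coordinates yields the claim.

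To establish the conjugation identity, I would compute the pushforward $(\gamma_t)_*\boldsymbol{T}$. Put $Y(t)=(\gamma_{-t})_*\boldsymbol{T}$, viewed as a one-parameter family of vector fields on $\N_V M$. The standard flow-derivative rule, combined with the semigroup property $\gamma_{-(t+h)}=\gamma_{-h}\circ\gamma_{-t}$, gives
\[
\frac{d}{dt}Y(t)\;=\;[\boldsymbol{C},Y(t)],\qquad Y(0)=\boldsymbol{T}.
\]
Since $[\boldsymbol{C},\boldsymbol{T}]=-\boldsymbol{T}$ and $\boldsymbol{C}(e^{-t})=0$, the candidate $Y(t)=e^{-t}\boldsymbol{T}$ solves this ODE with the same initial condition, so by uniqueness $(\gamma_{-t})_*\boldsymbol{T}=e^{-t}\boldsymbol{T}$, equivalently $(\gamma_t)_*\boldsymbol{T}=e^t\boldsymbol{T}$. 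Because $\gamma_t$ conjugates the flow of $\boldsymbol{T}$ at time $s$ to the flow of $e^t\boldsymbol{T}$ at time $s$, and the latter is $\tau_{e^t s}$, the conjugation identity follows wherever both sides are defined.

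For the uniformity of the domain, note that $\widetilde K=\{\,e^t X:X\in K,\ |t|\le k\,\}$ is compact in $N_V M$, being the continuous image of $K\times [-k,k]$ under fiberwise scaling, so $\widetilde K\times\{0\}$ is a compact subset of $\N_V M$. Standard ODE theory then yields an $\varepsilon>0$ such that $\tau_s$ is defined on an open neighborhood of $\widetilde K\times\{0\}$ for all $|s|<\varepsilon$. For $X\in K$ and $|t|<k$, the point $\gamma_{-t}(X,0)=(e^t X,0)$ lies in $\widetilde K\times\{0\}$, so $\gamma_t\tau_s\gamma_{-t}(X,0)$ is well-defined, and the previously established conjugation identity shows that $\tau_{e^t s}(X,0)$ is also defined and that the two agree.

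The main obstacle I anticipate is bookkeeping for the partially defined flow $\tau_s$: the vector field $\boldsymbol{T}$ is not known to be complete, whereas $\gamma_t$ is. Getting an $\varepsilon$ that works uniformly in $X\in K$ and $|t|<k$ is precisely where the compactness of $K$ and the bound on $|t|$ are needed, via the standard compactness argument that the flow of a smooth vector field on a manifold exists for a uniform time on any compact set.
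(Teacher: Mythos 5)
Your proof is correct and takes essentially the same route as the paper: the paper likewise deduces the conjugation identity $\tau_{e^ts}=\gamma_t\circ\tau_s\circ\gamma_{-t}$ from the bracket relation $[\boldsymbol{T},\boldsymbol{C}]=\boldsymbol{T}$ and then obtains the lemma by evaluating both sides at $(X,0)$, with the domain of definition handled by exactly the compactness argument you give. You supply more detail than the paper, which simply asserts the conjugation identity; your pushforward computation $(\gamma_t)_*\boldsymbol{T}=e^t\boldsymbol{T}$ and the zero-fiber evaluation both check out.
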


\begin{proof}
It follows from the first relation in \eqref{eq-ECT-relations} that 
\begin{equation}
\label{eq-identity-generators2}
\tau_{e^ts} =  \gamma_t\circ \tau _s  \circ \gamma_{-t}  
 \end{equation}
 (to be precise, the identity is well-defined and correct on any given compact set $K$, and for $|t|$ bounded by any given $k$, as long as $|s|$ is sufficiently small). The formula in the lemma follows by evaluating both sides on $(X,0)$.
 \end{proof}

 \begin{proof}[Proof of Theorem~\ref{thm-tubular-constr}]
Choose a neighborhood of the zero section in $N_VM$ and a smooth positive function $s(m) $ so that $\varphi_s(X_m)$ is defined for all $X_m\in U$ and all $|s|< 2 s(m)$.
Using Lemma~\ref{lemma-integrated-bracket-relation}, we find that the germ of the map
\[
\Phi (X_m) = \varphi_{s(m)}(s(m)^{-1} X_m)
\]
near the zero section of $N_VM$ is independent of the map $m\mapsto s(m)$ and is a tubular neighborhood.  The second relation in \eqref{eq-ECT-relations} implies that $\Phi$ carries the Euler vector field on the normal bundle to $E$.
\end{proof}

 Theorem~\ref{thm-blm} also asserts that there is a \emph{unique} (germ of a)  tubular neighborhood embedding  that carries the Euler vector field to  any given Euler-like vector field.  We have nothing really new to say about this uniqueness statement, but for completeness here is a proof.
 
 \begin{lemma}
 \label{lem-commutes-with-scalars}
 Let $V$ be a finite-dimensional vector space and let $\Psi \colon U \to W$ be a diffeomorphism from one open neighborhood of $0\in V$ to another, with $\Psi(0) = 0$.  If $\Psi$ carries the Euler vector field to itself, near $0$, and if the derivative of $\Psi$ at $0$ is the identity, then $\Psi$ is the identity near $0$.
 \end{lemma}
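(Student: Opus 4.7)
The plan is to exploit the flow of the Euler vector field, which on $V$ is given explicitly by $\phi_t(v) = e^t v$. Since $\Psi$ carries $E$ to $E$ near $0$, it intertwines the flows: the identity $\Psi(e^t v) = e^t \Psi(v)$ holds for all $(t,v)$ in a neighborhood of $\{0\} \times \{0\}$ in $\R \times V$. Because the flow $\phi_t$ contracts any star-shaped neighborhood of the origin into itself for $t \leq 0$, this identity actually extends to all $t \leq 0$ and all $v$ in some star-shaped neighborhood $U_0$ of $0$ contained in $U$.

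The key step is then to let $t \to -\infty$. Rewrite the intertwining identity as
\[
\Psi(v) = e^{-t} \Psi(e^t v)
\]
for $t \leq 0$ and $v \in U_0$. Since $\Psi(0) = 0$ and $D\Psi(0) = \operatorname{id}$, Taylor's theorem gives $\Psi(w) = w + R(w)$ with $\|R(w)\| \leq C \|w\|^2$ for $w$ near $0$. Substituting $w = e^t v$ yields
\[
e^{-t} \Psi(e^t v) = v + e^{-t} R(e^t v),
\]
and the error term is bounded by $C e^{t} \|v\|^2$, which tends to $0$ as $t \to -\infty$. Hence $\Psi(v) = v$ for every $v \in U_0$.

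The only obstacle worth mentioning is the domain bookkeeping: the intertwining relation $\Psi \circ \phi_t = \phi_t \circ \Psi$ is a priori only local in $t$, so one must verify that a star-shaped neighborhood of $0$ can be chosen on which the relation holds for all $t \leq 0$. This is straightforward: pick any open ball $U_0$ around $0$ with $\overline{U_0} \subset U$ on which $\Psi$ preserves $E$; for $v \in U_0$ the curve $t \mapsto e^t v$ stays in $U_0$ for all $t \leq 0$, and a standard uniqueness argument for flows of smooth vector fields propagates the identity from a neighborhood of $t = 0$ to all $t \leq 0$.
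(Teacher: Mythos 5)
Your proof is correct and follows essentially the same route as the paper's: establish the intertwining relation $\Psi(e^t v)=e^t\Psi(v)$ from uniqueness of flow lines of the Euler vector field, then combine it with the quadratic Taylor estimate at $0$ and let the contraction parameter tend to its limit to kill the error term. The only cosmetic difference is that the paper parametrizes with $e^{-t}$ for $t\ge 0$ rather than $e^{t}$ for $t\le 0$, and your explicit remark on the domain bookkeeping is a welcome addition to a point the paper passes over quickly.
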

 
 \begin{proof}
Let  $v$ be an element in   a ball around $0$ (with respect to some norm) that is contained in $U\subseteq V$.  
 Both of the curves $ \Psi(e^{-t} v )$ and $ e^{-t} \Psi (v)$ 
($t\ge 0$) have the same derivatives for all $t$, given by the negative of  the Euler vector field, and the same initial point at $t=0$.  Hence 
 \begin{equation}
 \label{eq-commutes-with-scalars}
 \Psi(e^{-t} v ) = e^{-t} \Psi (v) \qquad \forall t \ge 0.
 \end{equation}
 Now  by calculus, if $\Psi_*$ is the derivative of $\Psi$ at $0$, then there is a positive constant so that 
 \begin{equation}
 \label{eq-def-deriv}
\|  \Psi(u) -  \Psi_* u \| \le  \text{constant} \cdot \| u\|^2
 \end{equation}
for all $u\in U$ sufficiently close to $0$.   Writing $u= e^{-t}v$, multiplying \eqref{eq-def-deriv} by $e^t$, and using \eqref{eq-commutes-with-scalars},  we obtain 
 \[
  \| \Psi(v) - \Psi_* v \| \le   e^{-t } \cdot \text{constant} \cdot \| v\|^2 ,
  \]
  and so $\Psi (v) = \Psi_*v = v$.
 \end{proof}

\begin{proof}[Proof of the uniqueness statement in Theorem~\ref{thm-blm}]
If  two tubular neighborhood embeddings are given, under both of which  $E$ identifies with the Euler vector field, then the composition the first with the inverse of the second    is a diffeomorphism  $\Psi$   from one neighborhood of the zero section in the normal bundle $N_VM$ to another  that   fixes the zero section,  and
 carries the Euler vector field to itself.  By repeating the argument in Lemma~\ref{lem-commutes-with-scalars} we find that  if $X_m\in N_VM$ is contained in a ball  around $0$ that is contained in the domain of definition of $\Psi$, then 
 \[
 \Psi(e^{-t} X_m) = e^{-t} \Psi (X_m) \qquad \forall t \ge 0.
\]
Applying the projection $N_VM\to M$ to this equation and taking the limit as $t\to -\infty$, we find that $\Psi\colon N_VM \to N_V M$ is fiber-preserving near the zero section.  Now apply the previous lemma fiberwise, using the  condition (1.2) in the definition of tubular neighborhood embedding to verify that lemma's  derivative hypothesis.   \end{proof}

\section{Lie Filtrations and Unipotent Groups}
\label{sec-filtered-manifolds}

In this section we shall review the definition of a \emph{Lie filtration} on the tangent bundle of a smooth manifold,  due to Tanaka \cite{Tanaka70} (although the name for the concept that we use here was chosen by  Melin \cite{Melin82}) and give an algebraic description of  the unipotent \emph{osculating groups} that are attached to the points of a filtered manifold.

\begin{definition} 
\label{def-filtered-manifold}
Let $V$ be a smooth manifold.  A \emph{Lie filtration} on the tangent bundle $TV$ is an increasing  sequence of smooth vector subbundles 
 \[
 H^1    \subseteq H^2  \subseteq \cdots \subseteq H^r  = TV
\]
with the property that  if $X$ and $Y$ are vector fields on $V$, and also sections of $H^p$ and $H^q$, respectively,    then the Lie bracket $[X,Y]$ is a section of $H^{p+q}  $ (we set $H^{p+q}= TV$ if $p{+}q \ge r$).  An \emph{$r$-step filtered manifold} is a smooth manifold whose tangent bundle is equipped with a Lie filtration of length $r$, as above.
\end{definition}

\begin{remark}
The  concept of filtered manifold arises in a number of places.  Apart from \cite{Tanaka70} and \cite{Melin82}, see also    \cite{Morimoto93} and \cite{CapSlovak09}, for instance.  Some of the  treatments of filtered manifolds   in sub-Riemannian geometry  are particularly close to the perspective of this paper; see for example \cite[Secs. 4,5]{Bellaiche96} and \cite[Ch. 10]{Agrachevetal}.
 \end{remark}
  
We shall usually write $(V,H)$ to make explicit reference to the Lie filtration.   For simplicity we shall assume throughout  that the bundles $H^q$ in Definition~\ref{def-filtered-manifold} have constant rank, which of course they must have if $V$ is connected.  
 
\begin{example}
\label{ex-ordinary-filtered}
An ordinary smooth manifold is obviously a  $1$-step filtered manifold.  In the $1$-step case the constructions in this and the next two sections will be identical with the constructions in Section~\ref{sec-deformation}. \end{example}

\begin{example}  In the $2$-step case the Lie bracket condition in Definition~\ref{def-filtered-manifold} is vacuous, so   a 2-step filtered manifold is simply a smooth manifold together with a smooth  vector subbundle of the tangent bundle (Beals and Greiner \cite{BealsGreiner88} coined the term \emph{Heisenberg manifold}  for the special case in which this bundle has codimension one in the tangent bundle).  The calculations in this and the following sections are very easy in the $2$-step case.
\end{example}

For our purposes, the  significant features of  a filtered manifold $(V,H)$ will be accessed through the algebra of linear partial differential operators on $V$, and in particular through an increasing filtration on differential operators that is determined by the Lie filtration on $TV$.

We begin with some generalities on differential operators, unrelated to Lie filtrations. If  $X_1,\dots, X_n$ is any local frame for the tangent bundle of a smooth manifold, then any linear partial differential operator  $D$ can be expressed in a unique way as a linear combination 
\begin{equation}
\label{eq-diff-op-decomp}
 D =  \sum _\alpha f_\alpha X^\alpha ,
 \end{equation}
 where 
 \begin{enumerate}[\rm (i)]
 
 \item the sum is over all multi-indices $\alpha = (\alpha _1,\dots, \alpha _n) $ with nonnegative integer entries, 
 
 \item 
$
 X^\alpha =  X^{\alpha_1}_1 \cdots X_n^{\alpha_n}
$ (note that the order of $X_1,\dots, X_n$ is fixed), 
  and 
  
  \item   the coefficients  $f_\alpha$ are smooth functions, all but finitely many of them zero.
\end{enumerate}

\begin{lemma} 
\label{lemma-D-vanishing-at-v} 
Let $v$ be a point in a smooth manifold $V$, let $\{ X_1,\dots, X_n\}$ be a local frame for $TV$, defined near $v$.  If a linear differential operator $D$ is expressed in  terms of the frame as  in \eqref{eq-diff-op-decomp}, and if $D$   {vanishes at $v$} in the sense that $(Df)(v) = 0$ for every smooth function $f$ on $V$, then all the   functions $f_\alpha$ vanish at $v$. \qed
\end{lemma}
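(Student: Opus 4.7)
The idea is to produce test functions that, when plugged into $D$ and evaluated at $v$, isolate the individual coefficients $f_\beta(v)$. I would first choose local coordinates $y_1,\dots,y_n$ centered at $v$ that are adapted to the given frame, meaning $y_i(v)=0$ and $X_i$ agrees with $\partial/\partial y_i$ at $v$. Such coordinates exist: pick smooth functions $y_i$ with $y_i(v)=0$ and $dy_i(v)=\omega^i(v)$, where $\omega^1,\dots,\omega^n$ is the coframe dual to $X_1,\dots,X_n$; the inverse function theorem supplies a chart. The test functions will be the monomials $y^\beta$.

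The key computation is that
\[
(X^\alpha y^\beta)(v) \,=\, \begin{cases} 0 & \text{if } |\alpha|<|\beta|, \\ \beta!\,\delta_{\alpha,\beta} & \text{if } |\alpha|=|\beta|, \end{cases}
\]
with no claim made about the case $|\alpha|>|\beta|$. The first case is immediate: $y^\beta$ vanishes to order $|\beta|$ at $v$, and any differential operator of order $|\alpha|<|\beta|$ applied to such a function still vanishes at $v$. For the second case I would invoke principal symbols: since each commutator $[X_i,X_j]$ has order $1$ (hence lower order than $X_iX_j$), principal symbols of products multiply, so the principal symbol of $X^\alpha$ at $v$ equals $\prod_i\xi_i^{\alpha_i}=\xi^\alpha$ in the coordinates $(y_i)$. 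Expanding $X^\alpha=\sum_{|\gamma|\le|\alpha|}c_\gamma\partial^\gamma$ in the coordinate basis, this says $c_\gamma(v)=\delta_{\alpha,\gamma}$ for $|\gamma|=|\alpha|$, and the identity $(X^\alpha y^\beta)(v)=\beta!\,\delta_{\alpha,\beta}$ for $|\alpha|=|\beta|$ then follows from $\partial^\gamma(y^\beta)(v)=\gamma!\,\delta_{\gamma,\beta}$.

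The lemma follows by downward induction on $|\beta|$. Let $N$ be an upper bound on the orders of the multi-indices $\alpha$ that contribute to $D$. For $|\beta|=N$, no term with $|\alpha|>|\beta|$ appears, so evaluating $D(y^\beta/\beta!)$ at $v$ reduces to the single term $f_\beta(v)$, which must therefore vanish. Assuming inductively that $f_\alpha(v)=0$ for all $|\alpha|>k$, the same calculation for $|\beta|=k$ gives $f_\beta(v)=0$: the contributions with $|\alpha|<k$ vanish by the key computation, those with $|\alpha|=k$ collapse to $f_\beta(v)$, and those with $|\alpha|>k$ vanish by the inductive hypothesis regardless of the value of $(X^\alpha y^\beta)(v)$.

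The only non-trivial ingredient is the principal-symbol calculation, which is what handles the non-commutativity of the frame; without it one could not conclude that the top-order part of $X^\alpha$ in the coordinate basis equals $\partial^\alpha$ at $v$. Once that is in place, the downward induction accommodates the fact that $X^\alpha$ acting on a monomial of strictly lower degree can still have a nonzero value at $v$.
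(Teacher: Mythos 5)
The paper states this lemma with a \qed and offers no proof at all, treating it as standard, so there is no argument of the authors' to compare yours against; what matters is whether your argument stands on its own, and it does. Your strategy --- choose coordinates $y_i$ adapted to the frame at $v$, test $D$ against the monomials $y^\beta/\beta!$, and run a downward induction on $|\beta|$ --- is complete and correct. The two ingredients are sound: a monomial of degree $|\beta|$ in coordinates vanishing at $v$ is killed at $v$ by any operator of order less than $|\beta|$, and the multiplicativity of principal symbols gives that the top-order part of $X^\alpha$ in the coordinate basis is $\partial^\alpha$ at $v$, whence $(X^\alpha y^\beta)(v)=\beta!\,\delta_{\alpha\beta}$ when $|\alpha|=|\beta|$. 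The downward induction then correctly sidesteps the only genuine nuisance, namely that $(X^\alpha y^\beta)(v)$ is uncontrolled when $|\alpha|>|\beta|$, because those terms are already weighted by coefficients known to vanish. One small remark: your aside that the commutators $[X_i,X_j]$ have order one is not actually needed to justify $\sigma(X^\alpha)=\xi^\alpha$ at $v$ --- multiplicativity of the principal symbol does that directly, independent of any ordering issues --- but it does no harm. It is also worth noting that this argument adapts essentially verbatim to the weighted setting of Lemma 6.13 in the paper (also stated there without proof), with $H$-order replacing order and the privileged monomials replacing the $y^\beta$.
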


The following two  definitions are taken from the work of Choi and Ponge  {\cite[Section 2]{ChoiPonge15}} (which in turn adapts terminology from \cite[Section 4]{Bellaiche96}).

\begin{definition}
  Let $(V,H)$ be an $r$-step filtered manifold.  A \emph{local $H$-frame} for $V$ is a local frame $X_1,\dots, X_n$ for the tangent bundle such that for every $q = 1,\dots, r$,   the vector fields 
\[
X_1,
\dots, X_{\operatorname{rank}(H^q)}
\]
are sections of $H^q$, and so constitute a local frame for $H^q$.
\end{definition}

\begin{definition}
The \emph{weight sequence} of $V$ is the sequence
\[
(q_1, \dots , q_n) = (1,\dots, 1 ,2,\dots, 2,  \dots, r,\dots , r)
\]
in which each integer $q$ is repeated $\operatorname{rank}(H^q) - \operatorname{rank}(H^{q-1})$ times.
\end{definition}

\begin{remark}
With this terminology, if $\{ X_a\}$ is a local $H$-frame, then $X_a$ is a section of the vector bundle $H^{q_a}$.
 \end{remark} 
 
\begin{definition}[{\cite[Section 3]{Melin82}}]
Let $(V,H)$ be an $r$-step  filtered manifold. Let $D$ be a linear differential operator and let $s$ be a nonnegative integer.  We shall write 
\[
\order_H (D) \le s ,
\]
and say that the $H$-order of $D$ is no more than $s$, at a point $v\in V$, if  for some (or equivalently every)  local $H$-frame $X_1,\dots, X_n$ defined near  $v$, the operator $D$ can be expressed  as  a sum
\begin{equation}
 D = \sum _\alpha f_\alpha X^{\alpha_1}_1 \cdots X_n^{\alpha_n},
 \end{equation}
in such a way that 
\[
q_1\alpha_1 + \cdots + q_n \alpha_n > s \quad \Rightarrow \quad f_\alpha =0 ,
\]
where $\{ q_a\}$ is the weight sequence for $(V,H)$.
\end{definition}

\begin{example}
In the $1$-step  case (see Example~\ref{ex-ordinary-filtered})  this is of course the usual notion of order of a differential operator.  
\end{example}

\begin{definition} 
Let $(V,H)$ be a filtered manifold and denote by $\D (V)$ the algebra of linear partial differential operators on $V$.  We shall denote by 
\[
\D ^s (V) \subseteq \D (V)
\]
the linear space of all operators that are of $H$-order no more than $s$ at every point of $V$.
\end{definition}

It is evident that  if $p$ and $q$ are any nonnegative integers, then
\[
\D^p(V) \cdot \D ^q (V) \subseteq \D ^{p+q}(V),
\]
so the concept of $H$-order defines an increasing filtration on the algebra $\D(V)$.   If $X$ is a vector field on $V$, then  $X$ has $H$-order no more than   $q$ as a differential operator if and only if   it is a section of $H^{q}$.

The   notion of  $H$-order on differential operators  leads to the following    notion of order of vanishing of a function at a point in a filtered manifold:

\begin{definition} 
 Let $V$ be a filtered manifold and let $v$ be a point in $V$.  Let $q$ be a positive integer.
A smooth function $f$  on $V$ \emph{vanishes to $H$-order $q$  at $v$}  if  the function   $Df$ vanishes at $v$ for every differential operator   $D$ of $H$-order $q{-}1$ or less.  We shall  denote by 
\[
I_q (V,v) \subseteq C^\infty (V)
\]
the ideal of smooth, real-valued  functions on $V$ that vanish to $H$-order $q$.  For convenience we shall also  write  $I_0(V,v) = C^\infty (V)$.   
\end{definition}

Of course, even though the notation does not indicate it,   the ideals $I_q(V,v)$ depend on the filtration $H$. The spaces $I_q(V,v)$ decrease as $q$ increases, and in addition
\[
I_p(V,v) \cdot I_q(V,v) \subseteq I_{p+q} (V,v)
\]
for all $p , q \ge 0$.   
So we obtain a decreasing filtration of the algebra $ C^\infty (V)$ by ideals.

\begin{definition}
\label{def-A-zero-v}
Let $v$ be a point in a filtered manifold $(V,H)$.  Denote by $A_0(V,v)$ the associated graded algebra
\[
A_0 (V,v) =  \bigoplus _{q\ge 0} I_q(V,v)/I_{q{+}1}(V,v).
\]
\end{definition}

In the context of ordinary manifolds this naturally identifies with  the algebra of polynomial functions on the tangent space $T_vV$.  Our objective in the remainder of this section is to show that  $A_0(V,v)$  naturally identifies with the algebra of polynomial functions on a real unipotent group $\H_v$ attached to the Lie filtration and the point $v\in V$.

\begin{definition} 
\label{def-lie-algebra}
Let  $(V,H)$  be a filtered manifold and let $v\in V$.   Denote by $\mathfrak{h}_v$  the direct sum
\[
 {\mathfrak h}_v = \bigoplus_{q=1}^r  H_v^{q}/ H_v^{q-1} .
 \]
 Equip $\mathfrak{h}_v$ with a graded Lie algebra structure, as follows.  Given elements $\langle X_v\rangle_p$ and $\langle Y_v\rangle_q$ in degrees  $p$ and $q$, represented by tangent vectors  $X_v\in H_v^p$ and $Y_v\in H_v^q$, extend both  to sections of $H^p$ and $H^q$ and define
 \[
 \bigl [ \langle X_v\rangle _p , \langle Y_v\rangle _q \bigr ]  =   \langle  [X,Y]_v   \rangle_{p+q}.
 \]
For further details, and examples, see \cite{Melin82}, \cite{ChoiPonge15} or \cite{vanErpYuncken15}.
\end{definition}

\begin{lemma}
\label{lemma-rank-r-derivation}
The graded Lie algebra $\mathfrak{h}_v$ acts as   derivations on the graded algebra $A_0(V,v)$  via the   formula
\begin{equation*}
\label{eq-rank-r-derivation}
\delta_{\langle X_v\rangle _p} \colon \sum _{q \ge 0}  \langle a_q\rangle _q \longmapsto  \sum_{q \ge p}\langle X(a_q)\rangle _{q-p} ,
\end{equation*}
where $X_v$ is extended  to a     section $X$ of $H^p$, as in Definition~\textup{\ref{def-lie-algebra}} \textup{(}and where the angle-bracket notation $\langle a\rangle_q$ is as in \eqref{eq-assoc-graded-notation}\textup{)}. \qed
\end{lemma}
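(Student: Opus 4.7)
The plan is to verify three points: (i) the formula is well-defined, (ii) each $\delta_{\langle X_v\rangle_p}$ is a graded derivation of degree $-p$ on $A_0(V,v)$, and (iii) the assignment $\xi \mapsto \delta_\xi$ is a Lie algebra homomorphism. First I would isolate the key technical input, namely that any $D \in \D^p(V)$ maps $I_q(V,v)$ into $I_{q-p}(V,v)$: for any $D' \in \D^{q-p-1}(V)$, the composition $D'D$ lies in $\D^{q-1}(V)$, so $(D'D)(a)$ vanishes at $v$ whenever $a \in I_q(V,v)$. Together with the ideal product $I_p(V,v)\cdot I_q(V,v)\subseteq I_{p+q}(V,v)$ noted in the text, this does the heavy lifting.

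The well-definedness checks reduce to one substantive statement: if $Y$ is a section of $H^p$ whose value at $v$ lies in $H^{p-1}_v$, then $Y(a_q) \in I_{q-p+1}(V,v)$ for every $a_q \in I_q(V,v)$. To prove this I would expand $Y = \sum_{q_a \le p} f_a X_a$ in a local $H$-frame near $v$, then apply Lemma~\ref{lemma-D-vanishing-at-v} to conclude $f_a(v)=0$, i.e.\ $f_a \in I_1(V,v)$, for every $a$ with $q_a = p$, and estimate term by term: for $q_a = p$,
\[
f_a X_a(a_q) \in I_1(V,v) \cdot I_{q-p}(V,v) \subseteq I_{q-p+1}(V,v),
\]
while for $q_a < p$, $f_a X_a(a_q) \in I_{q-q_a}(V,v) \subseteq I_{q-p+1}(V,v)$ since $q - q_a \ge q - p + 1$. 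This single statement simultaneously handles independence of the lift of $\xi \in H^p_v/H^{p-1}_v$ to a vector in $H^p_v$, of its extension to a section of $H^p$, and (applied with $q$ replaced by $q{+}1$) of the representative $a_q$ of $\langle a_q\rangle_q$.

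The derivation property descends from the ordinary Leibniz rule $X(ab) = X(a)b + aX(b)$, all terms having the correct order of vanishing. The Lie bracket compatibility reduces to the calculation
\[
[\delta_{\langle X_v\rangle_p}, \delta_{\langle Y_v\rangle_q}]\bigl(\langle a_s\rangle_s\bigr) = \langle (XY - YX)(a_s)\rangle_{s-p-q} = \langle [X,Y](a_s)\rangle_{s-p-q} = \delta_{[\langle X_v\rangle_p, \langle Y_v\rangle_q]}\bigl(\langle a_s\rangle_s\bigr),
\]
the last equality being Definition~\ref{def-lie-algebra}. I expect the main obstacle to be the one well-definedness computation above, as it is the only step that requires descending to a local $H$-frame and invoking Lemma~\ref{lemma-D-vanishing-at-v}; the rest is formal once the inclusion $D(I_q)\subseteq I_{q-p}$ is in hand.
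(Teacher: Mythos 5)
Your proof is correct. The paper states this lemma with no proof (it is marked \qed as a routine verification), and your argument supplies exactly the details one would expect: the inclusion $\D^p(V)\cdot I_q(V,v)\subseteq I_{q-p}(V,v)$ obtained by composing with operators in $\D^{q-p-1}(V)$, the frame expansion plus Lemma~\ref{lemma-D-vanishing-at-v} for independence of the lift and extension, and the formal Leibniz and bracket computations. No gaps.
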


\begin{definition}
\label{def-osculating-group}
We shall denote by  $\H_v$   the unipotent group with Lie algebra $\mathfrak{h}_v$.  This is the \emph{osculating group} attached to the point $v$.  Denote by   $A(\H_v)$  the algebra of real-valued polynomial  functions  on $\H_v$.
\end{definition}

\begin{remark}
In the present context, \emph{unipotent group} means the same thing as \emph{simply connected nilpotent Lie group}, while  $A(\H_v)$     is the algebra of  functions on the group that  correspond to polynomial functions on the Lie algebra $\mathfrak{h}_v$ under the exponential map
\[
\exp \colon \mathfrak {h}_v   \longrightarrow \H_v ,
\]
which, we recall, is a diffeomorphism.   See for example \cite[Chapter XVI, Section 4]{Hochschild81} for a more algebraic   construction of $\H_v$.
 \end{remark}

Now if $A$ is an algebra  that is equipped with a locally finite-dimensional and locally nilpotent action of a finite-dimensional real nilpotent Lie algebra $\mathfrak{h}$ by derivations, then   the action of $\mathfrak{h}$ exponentiates to an action of the associated unipotent group $\H$ by algebra automorphisms.  And if  $\varepsilon$ is any character  of $A$, then there is an \emph{orbit homomorphism}\footnote{It is dual to the orbit map $\H\to \operatorname{Spectrum}(A)$ given by $h\mapsto h(\varepsilon)$.}
\begin{equation}
\label{eq-action-map}
A    \longrightarrow  A(\H)
\end{equation}
into the algebra of real-valued polynomial functions on the associated uni\-potent group that is defined by  the formula 
\begin{equation}
\label{eq-action-map-fmla}
{}\qquad a \longmapsto  \bigl [ h \mapsto \varepsilon(h^{-1}(a)) \bigr ] \qquad (a\in A , \quad h \in  \H).
\end{equation}
It  is an $\H$-equivariant algebra homomorphism if we let  $\H$   act  on $A(\H) $ by the left regular representation.

\begin{definition}
We shall call  the character 
\begin{equation*}
A_0(V,v) \ni \sum \langle a_q\rangle _q \stackrel \varepsilon \longmapsto a_0(v) \in \R
\end{equation*}
  the  \emph{counit}  of  $A_0(V,v)$. 
  \end{definition}
  
  We shall prove the following result.

\begin{theorem}
\label{thm-filtrations-and-unipotent-groups}
Let  $(V,H)$ be a filtered manifold, and let $v$ be a point in  $V$. The orbit homomorphism
\[
A_0 (V,v )  \longrightarrow   A(\H_v)
\]
 associated to the counit  of $A_0(V,v)$ is an $\H_v$-equivariant algebra isomorphism.
\end{theorem}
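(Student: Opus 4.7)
The plan is to show $\Phi$ is a graded, $\H_v$-equivariant algebra homomorphism, prove injectivity via a short computation in exponential coordinates of the second kind on $\H_v$, and then establish surjectivity by exhibiting explicit preimages of a set of polynomial generators of $A(\H_v)$. Fix a local $H$-frame $X_1,\dots,X_n$ near $v$ with weight sequence $(q_1,\dots,q_n)$, set $Y_a = \langle X_a|_v\rangle_{q_a}\in\mathfrak{h}_v$, and let $Y_1^*,\dots,Y_n^*$ be the dual basis. The $\H_v$-equivariance of $\Phi$ is immediate from \eqref{eq-action-map-fmla}. For graded-ness, identify $A(\H_v)\cong\operatorname{Sym}(\mathfrak{h}_v^*)$ via $\exp$ and grade polynomials by the scaling automorphism $Z_p\mapsto\lambda^p Z_p$ of $\mathfrak{h}_v$. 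Since $\delta_{\langle X_v\rangle_p}$ has degree $-p$ on $A_0(V,v)$ by Lemma~\ref{lemma-rank-r-derivation} and the counit $\varepsilon$ is supported in degree $0$, expanding $\Phi(\langle a\rangle_q)(\exp(\lambda\cdot Z))$ in powers of $\lambda$ picks out a pure $\lambda^q$, so $\Phi$ sends degree $q$ to weight $q$.

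\emph{Injectivity.} Parametrize an element $h\in\H_v$ via exponential coordinates of the second kind, $h = \exp(s_n Y_n)\cdots\exp(s_1 Y_1)$. Applying Lemma~\ref{lemma-rank-r-derivation} to expand $\varepsilon(h^{-1}\langle f\rangle_q)$ in the parameters $s_a$ produces the polynomial identity
\[
\Phi(\langle f\rangle_q)(h) \;=\; \sum_{|\alpha|_H = q}\frac{(-s_1)^{\alpha_1}\cdots (-s_n)^{\alpha_n}}{\alpha_1!\cdots\alpha_n!}\,X^\alpha f(v).
\]
Vanishing of $\Phi(\langle f\rangle_q)$ as a function on $\H_v$ therefore forces $X^\alpha f(v) = 0$ for every $\alpha$ with $|\alpha|_H = q$. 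Combined with the observation that $f\in I_q(V,v)$ already annihilates every monomial $X^\alpha$ with $|\alpha|_H < q$ at $v$, we conclude $Df(v) = 0$ for all $D\in\D^q(V)$, so $\langle f\rangle_q = 0$.

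\emph{Surjectivity.} Since $A(\H_v)$ is freely generated as a weighted polynomial algebra by $Y_1^*,\dots,Y_n^*$, it suffices to produce, for each $a$, an element of $A_0(V,v)$ that maps to $Y_a^*$. Introduce exponential coordinates of the second kind on $V$,
\[
\Psi(t_1,\dots,t_n) = \exp(t_n X_n)\cdots\exp(t_1 X_1)(v),
\]
a local diffeomorphism at $0\in\R^n$, and set $\phi_a = t_a\circ\Psi^{-1}$. The claim is that $\phi_a\in I_{q_a}(V,v)\setminus I_{q_a+1}(V,v)$ and that $\Phi(\langle\phi_a\rangle_{q_a}) = Y_a^*$. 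Graded-ness of $\Phi$ already forces any off-weight remainder to be zero, and once the claim is established, the image of $\Phi$ is an algebra containing every generator of $A(\H_v)$, hence equal to $A(\H_v)$.

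The main obstacle is verification of the two claims about the $\phi_a$ in the surjectivity step: that each has $H$-order exactly $q_a$ at $v$ and that its graded class is sent by $\Phi$ to the prescribed dual generator. This is essentially the ``privileged coordinates'' fact from sub-Riemannian geometry, proved by an inductive Baker--Campbell--Hausdorff computation that exploits the graded bracket structure on $\mathfrak{h}_v$ together with the elementary identity $X_a|_v = \partial/\partial t_a|_0$; the remaining steps are algebraic bookkeeping.
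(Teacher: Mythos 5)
Your overall strategy is sound, and the injectivity half is complete and correct: expanding $\varepsilon(h^{-1}\langle f\rangle_q)$ in second-kind exponential coordinates does yield the stated polynomial, its vanishing forces $(X^\alpha f)(v)=0$ for all $\alpha$ with $|\alpha|_H=q$, and together with $f\in I_q(V,v)$ this gives $f\in I_{q+1}(V,v)$. This is a somewhat different route from the paper, which proves the theorem by comparing the decreasing filtrations of $A_0(V,v)$ and $A(\H_v)$ by \emph{ordinary} order of vanishing and checking that the orbit map induces an isomorphism of associated graded (symmetric) algebras.

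The gap is in surjectivity. Everything there rests on the claim that $\phi_a=t_a\circ\Psi^{-1}$ vanishes to $H$-order exactly $q_a$ at $v$, and you explicitly defer this to ``an inductive Baker--Campbell--Hausdorff computation'' that is never carried out. That claim is precisely the nontrivial content of the existence of privileged coordinates (Bella\"{\i}che's theorem); it is not bookkeeping, and as written your proof is incomplete without it. The paper obtains the needed coordinates much more cheaply in Lemma~\ref{lemma-local-coords1}: by Lemma~\ref{lemma-D-vanishing-at-v} the monomials $X^\alpha$ of $H$-order at most $r$ give linearly independent functionals on the finite-dimensional space $C^\infty(V)/I_{r+1}(V,v)$, so one can simply \emph{choose} functions $x_a$ dual to the frame vectors among these monomials; such an $x_a$ is automatically killed at $v$ by every monomial of $H$-order less than $q_a$, hence lies in $I_{q_a}(V,v)$, with no flows or BCH needed. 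If you substitute those $x_a$ for your $\phi_a$, your surjectivity argument goes through. A minor further point: even granting the privileged-coordinate property, $\Phi(\langle\phi_a\rangle_{q_a})$ equals $-Y_a^*$ only up to decomposable corrections of the same weight (second-kind coordinates on $\H_v$ differ from the first-kind coordinates $Y_a^*$ by products), so you should argue by induction on weight that the image still contains all the generators rather than asserting the identity on the nose; the computation $X_b\phi_a(v)=\delta_{ab}$ does make this induction work.
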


\begin{remark} 
The orbit homomorphism in the theorem is the \emph{unique} $\H_v$-equivariant homomorphism    for which the composition
\[
\xymatrix@C=40pt{
A_0(V,v) \ar[r] &    A(\H_v) \ar[r]^-{\text{eval. at $e$}} &  \R
}
\]
is the counit of    $A_0(V,v)$. 
\end{remark}
 
\begin{lemma}
\label{lemma-local-coords1}
Let $V$ be a filtered manifold of rank $r$, and  let $v$ be a point in $V$. Let $ \{ q_1,\dots, q_n\}$ be the weight sequence for $(V,H)$ and   let $\{ X_a \}$ be a local $H$-frame, defined near $v$.
There are local coordinates 
$
\{x_a\} 
$
defined near $v$ 
 such that 
\begin{enumerate}[\rm (i)]
\item 
each $x_a$ vanishes  at $v$ to $H$-order $q_a$, and 
\item $X_a(x_b) = \delta _{ab}$ at the point  $v$, for all $a,b=1,\dots , n$.
\end{enumerate}
\end{lemma}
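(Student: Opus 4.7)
The plan is to first construct preliminary coordinates $y_1,\ldots,y_n$ near $v$ satisfying condition (ii), and then modify them inductively by polynomial corrections in lower-weight coordinates to achieve condition (i).

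For the preliminary step, pick any smooth local coordinates $z_1,\ldots,z_n$ near $v$ with $z_a(v)=0$. The matrix $M_{ab}=X_a(z_b)(v)$ is invertible since $\{X_a(v)\}$ is a basis of $T_v V$, so replacing the $z_a$ by appropriate $\R$-linear combinations produces coordinates $y_1,\ldots,y_n$ with $X_a(y_b)(v)=\delta_{ab}$.

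Next, reorder the frame so that $q_1\le q_2\le\cdots\le q_n$, and construct the $x_a$ inductively on $a$. If $q_a=1$, set $x_a:=y_a$; since $y_a$ vanishes at $v$, it lies automatically in $I_1(V,v)=I_{q_a}(V,v)$. If $q_a\ge 2$, assume $x_1,\ldots,x_{a-1}$ have been built satisfying both (i) and (ii), and set
\[
x_a \,=\, y_a + P_a(x_1,\ldots,x_{a-1}),
\]
where $P_a$ is a polynomial without constant or linear terms, whose monomials $x^\beta=\prod_b x_b^{\beta_b}$ all have weighted length $w(\beta):=\sum_b q_b\beta_b$ strictly less than $q_a$. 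Because $P_a$ has no constant or linear part, condition (ii) is preserved: $X_c(P_a)(v)=\sum_b (\partial_b P_a)(0)\,X_c(x_b)(v)=0$ for every $c$. A Jacobian check (the transition matrix from $(y_a)$ to $(x_a)$ is lower triangular with unit diagonal at $v$) shows that $(x_1,\ldots,x_n)$ is still a local coordinate system.

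To determine $P_a$, write $P_a=\sum_{k=2}^{q_a-1}P_a^{(k)}$ with $P_a^{(k)}$ a linear combination of weight-$k$ monomials, and fix the summands in order $k=2,3,\ldots,q_a-1$. The condition $f\in I_q(V,v)$ amounts to the vanishing of $X^\alpha(f)(v)$ for every multi-index with $w(\alpha)<q$ (any differential operator of $H$-order $<q$ being a $C^\infty(V)$-combination of such $X^\alpha$). Each monomial $x^\beta$ of weight $k$ lies in $I_k(V,v)$, being a product of functions $x_{b_j}\in I_{q_{b_j}}(V,v)$ by the inductive hypothesis; hence adding a weight-$k$ term $P_a^{(k)}$ does not alter $X^\alpha(x_a)(v)$ for $w(\alpha)<k$. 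We therefore pick $P_a^{(k)}$ successively to enforce $X^\alpha(x_a)(v)=0$ for $w(\alpha)=k$, obtaining in the end $X^\alpha(x_a)(v)=0$ for every $\alpha$ with $w(\alpha)<q_a$, which is exactly (i).

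The heart of the argument---and the main obstacle---is establishing at each stage the existence of a suitable $P_a^{(k)}$. This amounts to surjectivity of the linear map
\[
P \longmapsto \bigl(\alpha\mapsto X^\alpha(P)(v)\bigr)_{w(\alpha)=k}
\]
from weight-$k$ polynomials in $x_1,\ldots,x_{a-1}$ to functions on multi-indices of weighted length $k$. I would prove this by analyzing the Leibniz-rule expansion of $X^\alpha(x^\beta)(v)$, using property (i) for $x_1,\ldots,x_{a-1}$ together with $X_c(x_b)(v)=\delta_{cb}$ from (ii): one finds that the \emph{diagonal} pairing $X^\beta(x^\beta)(v)=\beta!$ is nonzero, and that by ordering the monomials $x^\beta$ (with $w(\beta)=k$ and $\beta_b=0$ for $b\ge a$) and the multi-indices $\alpha$ (with $w(\alpha)=k$) compatibly, the resulting pairing matrix is triangular with nonzero diagonal, which gives the needed surjectivity.
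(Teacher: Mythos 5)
Your argument is correct in outline, but it takes a genuinely different route from the paper. The paper's proof is a short duality argument: by Lemma~\ref{lemma-D-vanishing-at-v}, the functionals $f\mapsto (X^\alpha f)(v)$ attached to the monomials $X^\alpha$ of $H$-order at most $r$ are linearly independent, and they factor through the \emph{finite-dimensional} quotient $C^\infty(V)/I_{r+1}(V,v)$; linear algebra then produces in one stroke a dual family $f_\beta$ with $(X^\alpha f_\beta)(v)=\delta_{\alpha\beta}$, and the functions $f_\beta$ indexed by singleton multi-indices are the desired coordinates --- both (i) and (ii) are read off immediately from the biorthogonality. Your inductive polynomial-correction scheme is instead the classical construction of privileged coordinates in the style of Bella\"{\i}che and Choi--Ponge (cf.\ Remark~\ref{rem-privileged}); it is more laborious but more explicit, exhibiting each $x_a$ as $y_a$ plus a weighted polynomial in the lower coordinates. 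Two points in your sketch deserve tightening. First, the map $P\mapsto\bigl(X^\alpha(P)(v)\bigr)_{w(\alpha)=k}$ is \emph{not} surjective onto all functions on $\{w(\alpha)=k\}$: since $P$ has no constant or linear part, every $X^\alpha(P)(v)$ with $|\alpha|\le 1$ vanishes, so the image lies in the functions supported on $\{|\alpha|\ge 2\}$. This is harmless because the values you need to cancel, $X^\alpha\bigl(y_a+\sum_{j<k}P_a^{(j)}\bigr)(v)$, also vanish for $|\alpha|\le 1$ with $w(\alpha)<q_a$, and the sets $\{\beta: w(\beta)=k,\ |\beta|\ge2\}$ and $\{\alpha: w(\alpha)=k,\ |\alpha|\ge2\}$ have the same cardinality, so the relevant matrix is square. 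Second, the correct ordering for your triangularity claim is by \emph{total degree}: the Leibniz expansion of $X^\alpha(x^\beta)(v)$ forces each factor $x_b$ to absorb a nonempty subword of weight exactly $q_b$ (using $x_b(v)=0$, the inductive $H$-order vanishing, and the fact that any reordered product of frame fields still has the expected $H$-order, which is where the Lie bracket condition enters), whence $X^\alpha(x^\beta)(v)=0$ when $|\alpha|<|\beta|$ and $X^\alpha(x^\beta)(v)=\beta!\,\delta_{\alpha\beta}$ when $|\alpha|=|\beta|$; block-triangularity with invertible diagonal blocks follows. With these amendments your proof goes through; what the paper's approach buys is the avoidance of this entire combinatorial analysis, at the cost of being non-constructive.
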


\begin{proof}
Define a linear transformation  from  $\D(V)$ into the vector space dual of $C^\infty (V)$   by   the formula
\[
D \longmapsto \bigl [ f \mapsto (Df)(v) \bigr ] .
\]
It induces a  linear map 
\begin{equation}
\label{eq-fd-image}
\mathcal D^r(V)  \longrightarrow \bigl ( C^\infty (V) \big / I_{r+1}(V,v) \bigr ) ^* .
\end{equation}
Note that the  quotient    $C^\infty (V) \big / I_{r+1}(V,v)$ is a  \emph{finite-dimen\-sional} vector space.   
 
  It follows from Lemma~\ref{lemma-D-vanishing-at-v} that the images under  \eqref{eq-fd-image} of  the  monomial differential operators $X^\alpha$ of $H$-order no more than $r$   are linearly independent.  So by linear algebra there  are  functions $f_\beta\in C^\infty (V)$ with 
\[
(X^\alpha  f_\beta)(v) = \delta_{\alpha \beta}
\]
 The members  $\{x_a\} $  of this list of functions that correspond to the vector fields $\{ X_a\}$ form a local coordinate system of the required type.
 \end{proof}

\begin{remark}
\label{rem-privileged}
The coordinates provided by the  lemma above are called  \emph{privileged coordinates} in \cite[Definition 4.9]{ChoiPonge15} and \cite{Bellaiche96}, and their existence is   proved  in \cite[Proposition 4.13]{ChoiPonge15} and in \cite[Theorem 4.15]{Bellaiche96}.  Our argument is only slightly different.
 \end{remark}

 \begin{proof}[Proof of Theorem~\ref{thm-filtrations-and-unipotent-groups}]
Equip the algebra $A(\H_v)$  with the decreasing filtration given by order of vanishing, in the ordinary sense unrelated to Lie filtrations, at $e\in \H_v$. The associated graded algebra is the symmetric algebra on its degree one part, which identifies with  $\mathfrak{h}^*_v$. 

The algebra $A_0(V,v)$ also carries a  decreasing filtration, in which an element has order $j$ or more if it can be represented as a sum $\sum \langle a_q\rangle _q$, with each $a_q$ vanishing, also in the ordinary sense,  to   order $j$  or more.
The associated graded algebra is a symmetric algebra on the degree-one classes determined by the elements $\langle x_a \rangle_{q_a}$, where $\{x_a\}$ is any coordinate system as in  Lemma~\ref{lemma-local-coords1}.  

The filtrations of $A_ 0(V,v)$ and  $A(\H_v)$  are compatible with one another under the map \eqref{eq-action-map}, and the generators $\langle x_a \rangle_{q_a}$ map to the dual basis elements  
\[
 \langle X_{a,v}\rangle_{q_a}^* \in \mathfrak h^*_v,
 \]
  with $\{ X_a\} $ the local $H$-frame  in Lemma~\ref{lemma-local-coords1}.  This proves the theorem.\end{proof}

\begin{remark}
\label{remark-carnot1}
Let  $\{ X_a\}$ be a local $H$-frame near $v\in V$, and let $\{x_a\}$ be an associated system of privileged coordinates, as in Lemma~\ref{lemma-local-coords1}.  The frame determines a basis $\{\langle X_{a,v}\rangle_{q_a} \}$ for the Lie algebra $\mathfrak h_v$ and the local coordinates determine a local diffeomorphism 
\[
w\longmapsto \sum_a x_a(w) \langle X_{a,v}\rangle _{q_a}
\]
from $V$ to $\mathfrak h_v$, and hence, by exponentiation, a local diffeomorphism
\[
 V \stackrel \cong \longrightarrow \H_v.
\] 
This in turn  induces an isomorphism of algebras
\[
A (\H_v) \stackrel \cong \longrightarrow A_0(V,v).
\]
The algebra isomorphism  depends on the choice of coordinate systems $\{x_a\}$, in general, and is \emph{not} in general inverse to the canonical  isomorphism of Theorem~\ref{thm-filtrations-and-unipotent-groups}.   Those coordinates for which the two isomorphisms \emph{are} inverse to one another are called \emph{Carnot coordinates} in \cite{ChoiPonge15}.
\end{remark}


\section{Normal  Spaces for  Filtered Manifolds}
\label{sec-normal-spaces}

In this section we shall  construct  the filtered manifold analogue of the normal bundle.  Its fibers will be most naturally viewed as  unipotent homogeneous spaces rather than as quotients of tangent vector spaces.
 
\begin{definition}
\label{def-filtered-submanifold}
Let  $(V,H)$ be an $r$-step filtered manifold.  An embedded   submanifold $M\subseteq V$ is a  \emph{filtered submanifold} if the intersections 
\[
G ^q = H^q\vert_M \cap TM \qquad (q=1,\dots, r)
\]
are smooth vector subbundles of $TM$. 
\end{definition}

If $M$ is a filtered submanifold of $(V,H)$, then the  bundles $G^q$  form a Lie filtration of $TM$, so that $(M,G)$ is a filtered manifold in its own right.

\begin{definition}  Let $(M,G)$ be a  filtered submanifold   of a filtered manifold $(V,H)$, and denote by  $I_q(V,M)$   the ideal of smooth functions on $V$ that vanish to $H$-order at least $q$ on $M$. 
We shall denote by $A_0(V,M)$  the associated graded algebra 
 \[
 A_0(V,M) = \bigoplus_{q\ge 0} I_q(V,M) / I_{q+1}(V,M) 
 \]
 The  \emph{normal space} $N^H_VM$ is the spectrum of $A_0(V,M)$.
\end{definition}

\begin{theorem}
\label{thm-filtered-A-zero}
Let $(M,G)$ be a filtered submanifold of a filtered manifold $(V,H)$.
The normal space $N^H_VM$ is a smooth manifold in such a way that  the sheaf of smooth functions  is the sheaf   from Definition~\textup{\ref{def-sheaf}}.\end{theorem}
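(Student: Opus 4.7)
The plan is to apply Lemma~\ref{lemma-mfld-characterization}. The degree-zero component of $A_0(V,M)$ is $C^\infty(M)$, so every character of $A_0(V,M)$ restricts there to evaluation at a uniquely determined point $m\in M$; the resulting projection $N^H_VM \to M$ reduces the problem to exhibiting, near each $m\in M$, local coordinates and smooth generators of $\S_{A_0(V,M)}$ on a neighborhood of the entire fiber over $m$.

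Fix $m\in M$, write $k = \dim M$ and $n = \dim V$, and choose a local $H$-frame $X_1,\dots, X_n$ near $m$ in $V$ arranged so that $X_1,\dots, X_k$ restrict on $M$ to a local $G$-frame, while $X_{k+1},\dots, X_n$ are transverse to $M$; let $(q_1,\dots, q_n)$ be the associated weight sequence. The next step is to construct an $M$-relative version of the privileged coordinates of Lemma~\ref{lemma-local-coords1}: smooth functions $x_1,\dots, x_n$ on a neighborhood of $m$ in $V$ such that
\begin{enumerate}[\rm (a)]
\item $x_1|_M,\dots, x_k|_M$ are privileged coordinates on $M$ for the $G$-frame $X_1,\dots, X_k$;
\item each $x_a$ with $a > k$ vanishes to $H$-order $\geq q_a$ on $M$; in particular $x_{k+1},\dots, x_n$ vanish identically on $M$;
\item $X_a(x_b) = \delta_{ab}$ at every point of $M$ near $m$.
\end{enumerate}
The argument will parallel the proof of Lemma~\ref{lemma-local-coords1}, but the pointwise linear algebra used there is to be replaced by solving the corresponding linear system with coefficients in $C^\infty(M)$. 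Lemma~\ref{lemma-D-vanishing-at-v}, applied at every nearby $m'\in M$, guarantees that the monomial operators $X^\alpha$ transverse to $(M,G)$ remain pointwise linearly independent, which is what makes the system solvable.

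Given such coordinates, the proposed generators on the open set $\Omega$ consisting of characters lying over a suitable neighborhood of $m$ in $M$ are the $n$ elements
\[
x_1,\dots, x_k,\ \langle x_{k+1}\rangle_{q_{k+1}},\dots, \langle x_n\rangle_{q_n} \in A_0(V,M),
\]
the first $k$ viewed in degree zero. To verify smooth generation, I would use a weighted Taylor expansion along $M$ in the transverse coordinates: the vanishing orders secured by (b) force every monomial $x_{k+1}^{\alpha_{k+1}}\cdots x_n^{\alpha_n}$ to lie in $I_{q_{k+1}\alpha_{k+1}+\cdots+q_n\alpha_n}(V,M)$, so any homogeneous element $\langle a_q\rangle_q$ of $A_0(V,M)$ agrees on $\Omega$ with a polynomial in $\langle x_{k+1}\rangle_{q_{k+1}},\dots, \langle x_n\rangle_{q_n}$ whose coefficients are smooth functions of $x_1|_M,\dots, x_k|_M$. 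For the homeomorphism condition, the fiber of $N^H_VM$ over each nearby $m'\in M$ identifies, by the same orbit-homomorphism argument used in Theorem~\ref{thm-filtrations-and-unipotent-groups} and made precise in Theorem~\ref{thm-filtered-A-zero-fiber} below, with a unipotent homogeneous space of dimension $n-k$ on which the transverse generators form a global coordinate system; combined with the fact that $x_1|_M,\dots, x_k|_M$ are local coordinates on $M$, this yields a homeomorphism of $\Omega$ onto an open subset of $\R^n$.

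The main obstacle will be step (b) in the construction of the adapted coordinates: engineering the transverse coordinate $x_a$ to vanish to \emph{full} $H$-order $q_a$ along all of $M$, rather than merely at the chosen base point $m$ (which is what Lemma~\ref{lemma-local-coords1} produces directly). What makes this possible is the filtered submanifold hypothesis---the bundles $G^q = H^q|_M\cap TM$ are \emph{vector subbundles} of $TM$---which ensures that the linear algebra underlying Lemma~\ref{lemma-local-coords1} varies smoothly over $M$, so that the coefficients in the construction can be chosen in $C^\infty(M)$.
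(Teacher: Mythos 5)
Your proposal is correct and follows essentially the same route as the paper: apply Lemma~\ref{lemma-mfld-characterization}, construct transverse functions vanishing to full $H$-order $q_c$ along $M$ with $X_c(z_d)=\delta_{cd}$ on $M$ (the paper's Lemma~\ref{lemma-local-coords2}, proved by exactly the $C^\infty(M)$-coefficient matrix inversion you describe, using the relative independence statement of Lemma~\ref{lemma-D-vanishing-at-v2} rather than the pointwise Lemma~\ref{lemma-D-vanishing-at-v}), then get smooth generation from a weighted Taylor expansion and the homeomorphism from the fact that the transverse classes freely generate each fiber. The only cosmetic differences are that the paper normalizes the tangential coordinates only at the base point $m$, and obtains the fiber homeomorphism directly from free generation of $A_0(U,M\cap U)$ over $C^\infty(M\cap U)$ rather than by invoking the unipotent homogeneous space identification of Theorem~\ref{thm-filtered-A-zero-fiber}.
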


The proof is not difficult, but it requires some information about vector fields and local coordinates adapted to the inclusion of $M$ into $V$.

\begin{definition}
Let  $(M,G)$ be a filtered submanifold of a filtered manifold $(V,H)$. 
A \emph{local $(G,H)$-frame} for $TV$ at a point of $M$  is a local $H$-frame for $V$  with the additional property that the vector fields in the frame that are tangent to $M$ (upon restriction to $M$) form a local $G$-frame for $M$.
\end{definition}

The vector fields in the local frame divide into two sets: 
   \begin{enumerate}[\rm (i)]
   
   \item vector fields tangent to $M$ upon restriction to $M$, which restrict to give a $G$-local frame for $M$, and 
   
   \item vector fields  not tangent  to $M$.
 \end{enumerate}
  We shall call the latter  the \emph{normal} vector fields in the local frame. The normal vector fields $X_a$  for which  $a \le \operatorname{rank}(H^p)$ restrict to give a  local frame  for  the quotient bundle
$
H^p\big \vert _M \, \big / \, G^p .
$

\begin{lemma}
\label{lemma-local-coords2}
Let $(V,H)$ be an $r$-step filtered manifold  with order sequence $\{ q_a\}$, and let $(M,G)$ be a filtered submanifold of $V$. Let $\{ X_a\}$ be a local $(G,H)$-frame defined near a point  $m\in M$.  There are  smooth functions $z_c$ defined near $m$, one for each normal vector field $X_c$ in the frame,  such that 
\begin{enumerate}[\rm (i)]

\item 
$z_c$ vanishes on $M$ to $H$-order $q_c$.

\item $X_c(z_d) = \delta _{cd}$ on $M$.

\end{enumerate}
\end{lemma}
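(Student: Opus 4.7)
The plan is to adapt the linear-algebra argument of Lemma~\ref{lemma-local-coords1} from a single point to a whole submanifold, using induction on the weight. Shrinking the neighborhood of $m$ if necessary, I would first fix a tubular-neighborhood splitting of $V$ near $m$, giving ordinary transverse coordinates $u_1,\dots,u_k$ vanishing on $M$ (one for each normal index in the frame; note that these $u_c$ need \emph{not} satisfy condition (i) themselves). Any smooth function on $V$ then admits a formal Taylor expansion in the $u_c$'s with coefficients in $C^\infty(M)$, and by Borel's lemma any smoothly-varying formal expansion can be realized as a genuine smooth function. Conditions (i) and (ii) translate into a system of linear equations on these Taylor coefficients.

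Next, I would unpack the conditions on $z_d$. Condition (i) prescribes that $Dz_d|_M = 0$ for every differential operator $D$ of $H$-order at most $q_d{-}1$, while condition (ii) says $X_c(z_d)|_M = \delta_{cd}$ for each normal $c$. When $q_c < q_d$ the latter is automatic from (i); the nontrivial constraints are $X_c(z_d)|_M = \delta_{cd}$ for normal $c$ with $q_c \ge q_d$. All these conditions depend only on a jet of $z_d$ along $M$ of finite order, and translate into a finite system of linear equations on the Taylor coefficients of $z_d$ whose scalar matrices are smooth functions on $M$.

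The main obstacle is the simultaneous consistency and smooth solvability of this system. I would handle this by induction on $q$: assuming the $z_c$ are already constructed for weights strictly less than $q$, produce $z_d$ for each normal $d$ with $q_d = q$. At any single point $m' \in M$ near $m$, Lemma~\ref{lemma-local-coords1} applied to $V$ at $m'$ with the given $(G,H)$-frame yields a function satisfying all the required conditions at $m'$. This is exactly the statement that the relevant linear system on the Taylor coefficients is \emph{block-triangular}, with pointwise invertible diagonal blocks---the invertibility being just the pointwise content of Lemma~\ref{lemma-local-coords1}. Consequently the Taylor coefficients of $z_d$ can be solved for as smooth functions on a neighborhood of $m$ in $M$, and Borel's lemma then realizes the resulting formal expansion as a genuine smooth $z_d$ with the required properties, completing the induction.
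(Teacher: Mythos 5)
Your overall strategy (solve the jet equations along $M$) can be made to work, but the step that carries all the weight is asserted rather than proved. You claim that the pointwise solvability furnished by Lemma~\ref{lemma-local-coords1} at each $m'\in M$ ``is exactly the statement'' that your linear system on the Taylor coefficients is block-triangular with pointwise invertible diagonal blocks. That is a non sequitur: pointwise solvability of a smoothly varying linear system $A(m')x=b(m')$ does not by itself yield a smooth solution $m'\mapsto x(m')$ (the rank of $A(m')$ can jump), and it reveals no triangular structure. What is actually needed is that the evaluation functionals $f\mapsto (X^\alpha f)(m')$, for the purely normal monomials $X^\alpha$ of $H$-order at most $r$, are linearly independent on $I_1(V,M)$ at \emph{every} point $m'$ near $m$ --- this is the content of Lemma~\ref{lemma-D-vanishing-at-v2}, not of Lemma~\ref{lemma-local-coords1} --- so that the pointwise system has constant rank, is surjective, and hence admits a smooth right inverse. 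The paper short-circuits all of this: it produces a dual family $g_\beta\in I_1(V,M)$ at the single point $m$ (using the finite-dimensionality of $I_1(V,M)/I_{r+1}(V,M)$), observes that the matrix of functions $h_{\alpha\beta}=X^\alpha(g_\beta)\vert_M$ is the identity at $m$ and hence invertible on a neighborhood of $m$ in $M$, and corrects by the inverse matrix, $f_\beta=\sum_\gamma h^{\beta\gamma}g_\gamma$. No Taylor expansion, Borel's lemma, or induction on weight is required; indeed the conditions on $z_d$ for distinct $d$ do not interact, so your induction does no work.

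Two smaller inaccuracies. First, your system is not purely algebraic in the Taylor coefficients: since the normal frame fields are not purely transverse in your chosen splitting, the condition $(X^\alpha z_d)\vert_M=0$ for a normal monomial of order $j$ involves \emph{tangential} derivatives of the coefficients of order less than $j$; this is harmless if you solve order by order, but it contradicts your description of the system as one ``whose scalar matrices are smooth functions on $M$''. Second, condition (i) quantifies over \emph{all} operators of $H$-order at most $q_d-1$, including tangential and mixed monomials; to get a finite pointwise system you must reduce these to purely normal monomials by rearranging monomials in the $(G,H)$-frame, which is legitimate because the Lie filtration controls the brackets. The paper leaves this reduction implicit as well, but your argument needs it explicitly before the Taylor-coefficient bookkeeping can even begin.
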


To prove this generalization of Lemma~\ref{lemma-local-coords1} we shall   use the following generalization of Lemma~\ref{lemma-D-vanishing-at-v}.

\begin{lemma} 
\label{lemma-D-vanishing-at-v2} 
Let $M$ be an embedded submanifold of a smooth manifold $V$, and let $m$ be a point in  $M$. Let $\{ Z_1,\dots, Z_k\}$ be vector fields on $V$, defined in some neighborhood of $m\in V$, and assume that their values at $m$ project to linearly independent vectors in the normal space $TV\vert _M / TM$.  If a linear differential operator of the form 
\[
D = \sum f_\alpha Z^\alpha
\]
has the property that $(Df)(m) = 0$ for every smooth function $f$ on $V$ that vanishes on $M$, then all the coefficient functions $f_\alpha$ vanish at $m$. \qed
\end{lemma}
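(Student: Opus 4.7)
The plan is to adapt the argument for Lemma~\ref{lemma-D-vanishing-at-v} by testing $D$ against functions that \emph{vanish on $M$} rather than against arbitrary smooth functions. First I would choose smooth functions $z_1,\dots,z_k$ defined near $m$, each vanishing on $M$, such that
\[
Z_i(z_j)(m) = \delta_{ij}, \qquad i,j=1,\dots,k.
\]
Such functions exist by linear algebra: the pairing between $T_mV/T_mM$ and $I_1(V,M)/I_2(V,M)$ is non-degenerate at $m$, so the linear independence of the vectors $Z_1(m),\dots,Z_k(m)$ in the normal space lets me pick functions in $I_1(V,M)$ with the prescribed first-order derivatives.

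Next I would establish the key computation: for multi-indices $\alpha,\beta\in \N^k$ and for the monomial $z^\beta=z_1^{\beta_1}\cdots z_k^{\beta_k}$,
\begin{enumerate}[\rm (i)]
\item $(Z^\alpha z^\beta)(m)=0$ whenever $|\alpha|<|\beta|$, and
\item $(Z^\alpha z^\beta)(m)= \alpha!\,\delta_{\alpha\beta}$ whenever $|\alpha|=|\beta|$.
\end{enumerate}
For (i), each $Z_i$ is a first-order operator and so drops the ordinary order of vanishing on $M$ by at most one; hence $Z^\alpha z^\beta$ still vanishes on $M$, and in particular at $m$. For (ii), I would expand $Z^\alpha z^\beta$ by the Leibniz rule: any term that retains a factor of some $z_j$ evaluates to zero at $m$, so the only surviving contributions are those in which each of the $|\alpha|$ derivative actions has struck a distinct $z$-factor, giving products of numbers $Z_i(z_j)(m)=\delta_{ij}$ that collapse under summation over pairings to $\alpha!\,\delta_{\alpha\beta}$. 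Non-commutativity of the $Z_i$ causes no trouble, since their commutators are first-order and so shift any mis-ordered terms into strictly lower-order contributions that vanish at $m$ by (i).

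Granted (i) and (ii), the conclusion follows by reverse induction on $|\alpha|$. Let $N$ be the largest degree appearing in the finite sum defining $D$. For every $\beta$ with $|\beta|=N\ge 1$, applying $D$ to $z^\beta$ (which vanishes on $M$) yields
\[
0 \;=\; (Dz^\beta)(m) \;=\; \sum_\alpha f_\alpha(m)\,(Z^\alpha z^\beta)(m) \;=\; \beta!\, f_\beta(m),
\]
so $f_\beta(m)=0$. The descent is identical: once $f_\alpha(m)=0$ is known for all $|\alpha|>j$, testing against $z^\beta$ with $|\beta|=j$ forces $\beta!\,f_\beta(m)=0$. This handles every $\alpha\neq 0$; the coefficient $f_0$ of the identity is not constrained by the hypothesis, since $f_0\cdot f$ vanishes at $m$ trivially when $f(m)=0$, so the statement has to be read as asserting vanishing only for the nontrivial multi-indices. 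The main obstacle is the combinatorial identity (ii); everything else is bookkeeping, and (ii) could also be derived more conceptually by noting that $(Z^\alpha z^\beta)(m)$ depends only on principal symbols, reducing it to a symmetric-polarization identity that is forced by the duality $Z_i(z_j)(m)=\delta_{ij}$.
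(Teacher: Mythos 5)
Your argument is correct and complete. The paper states Lemma~\ref{lemma-D-vanishing-at-v2} (like its unfiltered antecedent, Lemma~\ref{lemma-D-vanishing-at-v}) without proof, so there is nothing in the text to compare against line by line; what you have supplied is the standard duality argument one would expect: pick $z_1,\dots,z_k\in I_1(V,M)$ dual to the $Z_i$ at $m$ (possible because $I_1(V,M)$ surjects onto $(T_mV/T_mM)^*$), verify the triangular pairing $(Z^\alpha z^\beta)(m)=\alpha!\,\delta_{\alpha\beta}$ for $|\alpha|\le|\beta|$, and run a downward induction on degree. Your Leibniz computation in (ii) is sound --- the pigeonhole observation that any term retaining an underived factor $z_j$ vanishes at $m$ is precisely what disposes of both the higher-order compositions and the ordering issue, so the remark about commutators is not even needed. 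One point you raise deserves to be flagged as a genuine (if minor) defect of the statement rather than of your proof: for the empty multi-index the conclusion fails, since $D=f_0\cdot\mathrm{id}$ annihilates every $f$ vanishing on $M$ at the point $m\in M$ no matter what $f_0(m)$ is. Your reading --- that the conclusion is asserted only for $\alpha\ne 0$ --- is the right one, and it is the reading consistent with the lemma's use in the proof of Lemma~\ref{lemma-local-coords2}, where only monomial operators of positive order are claimed to give linearly independent functionals on $I_1(V,M)$ (the identity operator restricts to the zero functional there and must be excluded).
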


\begin{proof}[Proof of Lemma~\ref{lemma-local-coords2}]
Acccording to Lemma~\ref{lemma-D-vanishing-at-v2} the monomial operators  $X^\alpha$ that use only normal vector fields in the local $(G,H)$-frame map by evaluation at $m$ to a linearly independent set in $\operatorname{Hom} ( I_1(V,M), \R)$. If we consider only monomial operators of $H$-order $r$ or less, then this linearly independent set lies in the finite-dimensional vector space 
\[
\operatorname{Hom} ( I_1(V,M)/ I_{r+1}(V,M), \R)
\subseteq 
\operatorname{Hom} ( I_1(V,M), \R)
\]
and so, by linear algebra, associated to this finite linearly independent set in a finite-dimensional vector space there are functions $g_\beta \in I_1(V,M)$ with $X^\alpha(g_\beta) = \delta _\alpha ^\beta$ at the point $m$.  

We want  to adjust the functions $g_\beta$ so that this relation holds near $m$ in $M$, not only at the single point $m$.
Let $h_{\alpha\beta} = X^\alpha (g_\beta)$. This matrix of functions is the identity at $m$, and so is invertible near $m$.  Let $h^{\alpha\beta}$ be the entries of the inverse matrix and define 
\[
f_\beta = \sum _\gamma h^{\beta\gamma}g_\gamma.
\]
Then $X^\alpha(f_\beta) = \delta_{\alpha\beta}$ on $M$, near $m$.
Now, if we define $ z_c$ to be the function $f_\beta$ associated to the vector field $  X_c\in \{ X^\beta\}$, then the functions $\{ z_c\}$ have the required properties.
\end{proof}

\begin{proof}[Proof of Theorem~\ref{thm-filtered-A-zero}] 
We shall use the vector fields and functions obtained above to show that the criteria in Lemma~\ref{lemma-mfld-characterization} are satisfied for every character $\varphi$  of $A_0(V,M)$.  

The degree zero part of $A_0(V,M)$ is $C^\infty(M)$, and $\varphi$  restricts there to evaluation at some $m\in M$. 
Let $\{ X_a\}$ be a local $(G,H)$-frame near $m$.  Choose smooth functions $\{z_c\}$ on $V$ as in Lemma~\ref{lemma-local-coords2}. In addition, choose   smooth functions $\{y_a\}$ on $V$, indexed by the  members $Y_a$ of the local $(G,H)$-frame that are tangent to $M$, so that 
\[
Y_a(y_b) = \delta_{ab}\quad \text{at $m\in V$}.
\]
There is a neighborhood $U$ of $m\in V$ such that  functions $\{ y_a,z_c\}$  are   coordinates for  $U$, while the functions $\{y_a\}$ restrict to   coordinates for $M\cap U$.

Now let $\Lambda$ be the open set in $N^H_VM$ consisting of all those characters whose restriction to the degree zero part of $A_0(V,M)$ is evaluation at some point of $M \cap U$.  It follows from Taylor's theorem that the  elements 
\begin{equation}
\label{eq-smooth-generators-A-zero}
 \langle y_a\rangle _0 \quad \text{and } \quad  \langle z_c \rangle _{q_c}
\end{equation}
smoothly generate $A_0(V,M)$ over $\Lambda$.  

Moreover  $A_0(U,M\cap U)$ is freely generated as an algebra over its degree zero part $C^\infty (M\cap U)$ by the classes $\langle z_c\rangle _{q_c}$.  So if $\dim(M){=}k$ and $\dim (V) {=} n$, then the map 
\[
\operatorname{Spectrum}(A_0(V,M)) \longrightarrow \R^k \times \R ^{n-k}
\]
given by evaluation on the generators \eqref{eq-smooth-generators-A-zero} sends $\Lambda$ homeomorphically to the open set 
$W\times \R^{n-k}$, where $W\subseteq \R^k$ is the range of the coordinates $\{y_c\}$ on $M \cap U$.
\end{proof}

We shall now calculate the normal space $N^H_VM$ in terms of the osculating groups  introduced in the last section.
There is a natural map 
\begin{equation}
\label{eq-normal-space-submersion}
N^H_V M \longrightarrow M
\end{equation}
corresponding to the inclusion of $C^\infty(M)$ as the degree zero subalgebra of $A_0(V,M)$, and fiber of $N^H_VM$  over    $m\in M$ identifies with the  spectrum of  the following algebra.

\begin{definition}
If $m\in M$, then we shall denote by 
$
A_{0,m}(V,M) 
$
the quotient of $A_0(V,M)$ by the ideal in $A_0(V,M)$ generated by the vanishing ideal of $m$ in $C^\infty (M)$. 
The formula 
\[
\varepsilon_m\colon \sum \langle a_q\rangle_q \longmapsto a_0(m)
\]
defines a character of $A_{0,m}(V,M)$ that we shall call the \emph{counit}.
\end{definition}

\begin{theorem}
\label{thm-filtered-A-zero-fiber}
Let $(M,G)$ be a filtered submanifold of a filtered manifold $(V,H)$ and let $m $ be a point in $M$.  Let $\H_m$ and $\G_m$ be the osculating groups for $m\in V$ and $m\in M$, respectively.
There is a unique  $H_m$-equivariant algebra  isomorphism
\[
A_{0,m}(V,M) \longrightarrow A( \H_m /\G_m)
\]
whose composition with evaluation at the identity coset in $\H_m/\G_m$ is the       counit $\varepsilon_m$ of $A_{0,m}(V,M)$.
\end{theorem}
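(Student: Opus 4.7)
The plan is to imitate the proof of Theorem~\ref{thm-filtrations-and-unipotent-groups}, with the submanifold $M$ replacing the single point.  The desired algebra isomorphism will arise as an orbit homomorphism from an action of $\H_m$ on $A_{0,m}(V,M)$, and will factor through the coset space because of a $\G_m$-invariance of the counit.

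First I would generalize Lemma~\ref{lemma-rank-r-derivation} to the submanifold setting, giving an action of $\mathfrak h_m$ on $A_{0,m}(V,M)$ by graded derivations: for $\langle X_m\rangle_p\in \mathfrak h_m$, extend $X_m$ to a section $X$ of $H^p$ defined near $m$ in $V$, and let $\delta_{\langle X_m\rangle_p}$ act on $A_0(V,M)$ by the formula of Lemma~\ref{lemma-rank-r-derivation}.  Because $X$ has $H$-order at most $p$, it maps $I_q(V,M)$ into $I_{q-p}(V,M)$ and descends to a derivation of $A_0(V,M)$ of degree $-p$.  The key well-definedness check---that the action descends to $A_{0,m}(V,M)$ and is independent of the choice of extension---reduces to showing that if a section $X'$ of $H^p$ vanishes at $m$, then its action falls into $\mathfrak m_m\cdot A_0(V,M)$ modulo the next filtration level.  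Expanding $X' = \sum g_b Y_b$ in a local $H$-frame, the contributions with $q_b < p$ fall into $I_{q-p+1}$, while those with $q_b = p$ factor out the coefficient $g_b|_M \in \mathfrak m_m$.  Compatibility with the bracket of $\mathfrak h_m$ is immediate from Definition~\ref{def-lie-algebra}.  Local nilpotence then integrates this action to one of $\H_m$ by algebra automorphisms, and the orbit homomorphism associated to $\varepsilon_m$,
\[
\phi\colon A_{0,m}(V,M)\longrightarrow A(\H_m),\qquad a\longmapsto \bigl[h\mapsto \varepsilon_m(h^{-1}(a))\bigr],
\]
is $\H_m$-equivariant.

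Next I would verify that $\mathfrak g_m\cdot \varepsilon_m = 0$, so that $\phi(a)$ is right-$\G_m$-invariant and $\phi$ factors through $A(\H_m/\G_m)$.  For $\langle Y_m\rangle_p\in \mathfrak g_m$, choose the extension $Y$ to be a section of $H^p$ that is tangent to $M$ (possible since $Y_m\in G^p_m$).  The only nonzero contribution to $\varepsilon_m(\delta_{\langle Y_m\rangle_p}(\langle a\rangle_q))$ comes from the degree balance $q = p$, where it equals $Y(a)(m)$; but $a|_M = 0$ and $Y$ is tangent to $M$, so $Y(a)|_M = 0$.

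Finally I would check that the resulting map $\phi\colon A_{0,m}(V,M)\to A(\H_m/\G_m)$ is an isomorphism by the filtration argument used in the proof of Theorem~\ref{thm-filtrations-and-unipotent-groups}.  Filter the source by representing classes as $\sum \langle a_q\rangle_q$ with each $a_q$ vanishing at $m$ in the ordinary sense to order $\ge j$, and filter the target by vanishing at the identity coset.  Choosing a local $(G,H)$-frame $\{Y_a\}\cup\{X_c\}$ at $m$ together with the associated normal functions $z_c$ of Lemma~\ref{lemma-local-coords2}, the proof of Theorem~\ref{thm-filtered-A-zero} shows that $A_{0,m}(V,M)$ is freely generated as a polynomial algebra by the classes $\langle z_c\rangle_{q_c}$.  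A Taylor expansion of $\phi(\langle z_c\rangle_{q_c})$ in exponential coordinates on $\H_m$ shows that $\phi$ preserves filtrations and that the linear term at the identity coset is (up to sign) the dual basis element $\langle X_{c,m}\rangle_{q_c}^* \in (\mathfrak h_m/\mathfrak g_m)^*$---the computation uses $X_c(z_c)(m) = 1$, $X_d(z_c)(m) = 0$ for $d\neq c$, and $Y_a(z_c)|_M = 0$.  Since these dual classes form a basis for $(\mathfrak h_m/\mathfrak g_m)^*$, the induced map on associated graded is an isomorphism of polynomial algebras, and hence so is $\phi$.  Uniqueness is then forced by $\H_m$-equivariance, since any such $\phi'$ satisfies $\phi'(a)(h\G_m) = \phi'(h^{-1}a)(e\G_m) = \varepsilon_m(h^{-1}a)$.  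The main obstacle in the plan is verifying the well-definedness of the $\mathfrak h_m$-action on the quotient $A_{0,m}(V,M)$; the identification of $\phi$ on privileged generators then follows from bookkeeping with the local $(G,H)$-frame.
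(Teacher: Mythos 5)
Your proposal is correct and follows essentially the same route as the paper: the orbit homomorphism associated to the counit, the observation that $\varepsilon_m\circ\delta_X=0$ for $X\in\mathfrak g_m$ (so the map lands in the right $\G_m$-invariants), and the filtration-by-ordinary-order-of-vanishing argument sending $\langle z_c\rangle_{q_c}$ to the dual basis of $(\mathfrak h_m/\mathfrak g_m)^*$ to get an isomorphism on associated graded algebras. Your explicit well-definedness check for the $\mathfrak h_m$-action on the quotient $A_{0,m}(V,M)$ is a detail the paper leaves implicit, but it is carried out correctly.
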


\begin{remark}
Here  $A( \H_m /\G_m)$ is the algebra of polynomial functions on the unipotent homogenous space $ \H_m /\G_m$, or equivalently the algebra of polynomial functions on $\H_v$ that are invariant under right translations by elements of $\G_m$.
\end{remark}

\begin{proof}
The Lie algebra $\mathfrak h_m$ acts on $A_{0,m}(V,M)$ by derivations according to the formula in Lemma~\ref{lemma-rank-r-derivation}, and this action exponentiates to a locally finite-dimensional  action of $H_v$ by automorphisms.  The image of the  orbit map
\[
A_{0,m}(V,M) \longrightarrow A(\H_m)
\]
associated to the counit $\varepsilon_m$ is included in the right $\G_m$-invariant functions on $A (\H_m)$; this is a consequence of the fact that if $X\in \mathfrak g_m$, then 
\[
\varepsilon_m(\delta_X(a)) = 0
\]
for every $a\in A_{0,m}(V,M)$.  So we obtain an orbit homomorphism
\[
A_{0,m}(V,M) \longrightarrow A( \H_m/\G_m) ,
\]
and it remains to show that it is an isomorphism.  We shall use  a variation on the argument used to prove Theorem~\ref{thm-filtrations-and-unipotent-groups}.  

Filter  $A_{0,m}(V,M) $ by order of vanishing of functions in the ordinary sense at $m$.  Using the coordinates of the previous lemma, the associated graded algebra is freely generated by the classes $\langle z_c\rangle _{q_c}$.  

Filter $A( \H_m/\G_m)$ by order of vanishing in the ordinary sense at the basepoint in $\H_m/\G_m$.  The associated graded algebra is freely generated by the normal dual vectors $\langle Z_c\rangle ^*\in (\mathfrak h_m/\mathfrak g_m)^*$. 

Our orbit map is filtration preserving,  we find that it induces an isomorphism on associated graded algebras; indeed it maps $\langle z_c \rangle _{q_c}$ to $\langle Z_c\rangle^*$.
\end{proof}

\begin{remark}
The algebra  $A_0(V,M)$ consists of those smooth functions on the normal space $N^H_VM$ whose restrictions to all of  the fibers of \eqref{eq-normal-space-submersion} are  polynomial functions.
\end{remark}

\section{Deformation Spaces for Filtered Manifolds}

In this section we shall construct the deformation space associated to a filtered submanifold of a filtered manifold.  We shall copy Section~\ref{sec-deformation}  almost verbatim.  

\begin{definition}  Let $(M,G)$ be a filtered submanifold of a filtered manifold $(V,H)$.
Denote by  $A(V,M)$ the algebra of  Laurent polynomials 
 \[
 \sum _{n\in \Z} a_q t^{-q}
 \]
whose coefficients are smooth, real-valued  functions on $V$ that satisfy the condition
\[
q > 0 \quad \Rightarrow \quad \text{$a_q$ vanishes to $H$-order $q$ on $M$} .
\]
 The \emph{deformation space} $\N^H_VM$ is the spectrum of $A(V,M)$.
\end{definition}

As is the case for ordinary manifolds, the deformation space is a union 
\begin{equation*}
\N^H_VM = N^H_VM {\times} \{ 0\}\, \sqcup\, \bigsqcup _{\lambda \in \R ^{\times}}  V {\times} \{ \lambda\} ,
\end{equation*}
(but of course with the normal space from the previous section).
 
\begin{theorem}
\label{thm-def-space-mfld2}
The deformation space $\N^H_VM$ is a smooth manifold  in such a way that  the sheaf of smooth functions  is the sheaf   from Definition~\textup{\ref{def-sheaf}}.
\end{theorem}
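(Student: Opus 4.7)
The plan is to mirror the argument for Theorem~\ref{thm-def-space-mfld}, with privileged coordinates from Lemma~\ref{lemma-local-coords2} replacing the ordinary coordinates adapted to $M$, and with the polynomial-algebra calculation of the normal space from Theorem~\ref{thm-filtered-A-zero-fiber} handling the zero fiber.  As in the ordinary case we verify the hypotheses of Lemma~\ref{lemma-mfld-characterization} at every character of $A(V,M)$. For characters over $\lambda\ne 0$, the quotient of $A(V,M)$ by the ideal generated by $t{-}\lambda$ is canonically isomorphic to $C^\infty(V)$ via evaluation at $t{=}\lambda$ (the proof of Lemma~\ref{lemma-assoc-graded-alg-comp1} goes through unchanged because it uses nothing about the vanishing filtration).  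So these characters inherit local coordinates from local coordinates on $V$, together with the element $t$ itself.

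The substantive case is a character $\varphi$ lying over $\lambda{=}0$.  Its restriction to the degree-zero subalgebra $C^\infty(M)$ of $A_0(V,M)$ is evaluation at some $m\in M$, and by Theorem~\ref{thm-filtered-A-zero-fiber} the fiber of $N^H_V M \to M$ over $m$ is the smooth unipotent homogeneous space $\H_m/\G_m$.  Choose a local $(G,H)$-frame $\{X_a\}$ near $m$, split into tangential vector fields $\{Y_a\}$ ($a=1,\dots,k$) and normal vector fields $\{X_c\}$ ($c=1,\dots,n{-}k$).  By Lemma~\ref{lemma-local-coords2} we obtain functions $z_c$ on $V$ with $z_c$ vanishing to $H$-order $q_c$ on $M$ and $X_c(z_d)=\delta_{cd}$ on $M$; choose also smooth functions $y_a$ so that $Y_a(y_b)=\delta_{ab}$ on $M$, whose restrictions are local coordinates on $M$ near $m$.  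Let $U$ be a neighborhood of $m$ on which $\{y_a,z_c\}$ are local coordinates.

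The candidate coordinate functions near $\varphi$ are the $n{+}1$ elements
\[
t,\quad y_1,\dots, y_k,\quad z_1 t^{-q_{c_1}},\dots, z_{n-k} t^{-q_{c_{n-k}}} \in A(V,M),
\]
and $\Lambda\subseteq \N^H_VM$ is the open set of characters that project either to $U\times\R^\times\subseteq V\times\R^\times$ or to the fiber of $N^H_VM$ over a point of $M\cap U$.  The map to $\R^{n+1}$ given by evaluation on these generators sends $\Lambda$ onto the open set
\[
\bigl\{(\lambda,\eta_1,\dots,\eta_k,\zeta_1,\dots,\zeta_{n-k}) \,:\, (\eta_1,\dots,\eta_k,\lambda^{q_{c_1}}\zeta_1,\dots,\lambda^{q_{c_{n-k}}}\zeta_{n-k})\in W\bigr\},
\]
where $W$ is the image of $U$ under $\{y_a,z_c\}$, and the description of the zero fiber provided by Theorem~\ref{thm-filtered-A-zero} together with the manifold structure on $V\times \R^\times$ shows this map is a homeomorphism.

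The step I expect to be the main obstacle is the smooth-generation condition (i) of Lemma~\ref{lemma-mfld-characterization}: given an arbitrary Laurent polynomial $\sum a_q t^{-q}\in A(V,M)$, one must represent $\widehat{\sum a_q t^{-q}}$ as a smooth function of the candidate coordinates on $\Lambda$.  The mechanism is a weighted Taylor expansion of each $a_q$ in the privileged coordinates: because $a_q$ vanishes to $H$-order $q$ on $M$, a Hadamard-type expansion (of the sort implicit in the proof of Theorem~\ref{thm-filtered-A-zero}, using that $A_0(U,M\cap U)$ is freely generated over $C^\infty(M\cap U)$ by the classes $\langle z_c\rangle_{q_c}$) writes
\[
a_q = \sum_{w(\alpha)\ge q} f_{q,\alpha}(y)\, z^{\alpha} + r_{q,N}(y,z),
\]
with $w(\alpha)=\sum q_{c_i}\alpha_i$ the weighted degree, $f_{q,\alpha}\in C^\infty$, and $r_{q,N}$ vanishing on $M$ to $H$-order at least $N$.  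Multiplying by $t^{-q}$ turns every monomial into a product of the generators $z_c t^{-q_c}$ with a non-negative power of $t$ and a smooth function of $y$; one then passes to the limit in $N$ using a Borel-type argument to obtain a global smooth representation on $\Lambda$.  Once this expansion is in hand, the remaining verifications are routine extensions of Theorem~\ref{thm-def-space-mfld}.
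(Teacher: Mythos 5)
Your proposal follows essentially the same route as the paper: the same coordinate elements $t$, $y_a$, $z_c t^{-q_c}$ from Lemma~\ref{lemma-local-coords2}, the same open set $\Lambda$, and the same homeomorphic image in $\R^{n+1}$, with the hypotheses of Lemma~\ref{lemma-mfld-characterization} verified exactly as in Theorem~\ref{thm-def-space-mfld}. The only remark worth making is that your weighted Taylor expansion needs no Borel-type limit: a single finite-order expansion with $N$ larger than the largest $q$ appearing in the Laurent polynomial, whose smooth remainder vanishes to $H$-order $>q$ and hence contributes a positive power of $t$ after division by $t^{q}$, already exhibits $\widehat{a_q t^{-q}}$ as a smooth function of the candidate coordinates.
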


\begin{proof}
We shall follow the proof of Theorem~\ref{thm-def-space-mfld}, and we shall use the same coordinate functions $\{y_a\}$ and $\{z_c\}$  as in the proof of  Theorem~\ref{thm-filtered-A-zero}, defined in a neighborhood $U$ of $m\in V$.  Let 
$\Lambda \subseteq \N_VM$ be the open subset consisting of all $(u,\lambda)$ with $u\in U$ and $\lambda \ne 0$, together with all the elements $(X_m,0)$, with $X_m\in \H_m/G_m$.  The elements
\begin{equation}
\label{eq-def-space-coords2}
t, \quad y_a, \quad\text{and} \quad z_ct^{-q_c}
\end{equation}
of $A(V,M)$ satisfy the conditions of Lemma~\ref{lemma-mfld-characterization}.  If $W\subseteq \R^n$ is the image of the coordinates $\{ y_a,z_c\}$, then the functions \eqref{eq-def-space-coords2} map $\Lambda$ homeomorphically to  the open subset 
\[
\left \{\, 
\bigl (\lambda, \{y_a\},\{z_c\} \bigr ) \, : \, \bigl  (\{y_a\},\{ \lambda^{q_{c}} z_{c}\} \bigr) \in W 
\,\right  \} 
\]
of $\R^{n+1}$.
\end{proof}

\section{The Tangent Groupoid for Filtered Manifolds}
\label{sec-tangent-groupoid-filtered}

In this section we shall briefly discuss the diagonal embedding of a filtered manifold into its square, where the deformation space carries a Lie groupoid structure. We shall describe this groupoid structure in terms of the osculating groups in Definition~\ref{def-osculating-group}.

 \begin{definition}
 \label{def-tangent-groupoid-filtered}
 Let $(M,G)$ be a filtered manifold, and define  a Lie filtration of $M{\times}M$ by defining $H^p \subseteq TM{\times}TM$  to be   $G^p{\times} G^p$.  The  \emph{tangent groupoid} of $(M,G)$   is  the deformation space 
\begin{equation*}
\T ^G M : = \N^H _{  M} \, M{\times}M.
\end{equation*}
associated to the diagonal embedding of $M$ in $M{\times}M$.
 \end{definition}
 
  The   tangent groupoid for filtered manifolds  was   previously constructed   by  Van Erp \cite{vanErp05} and  Ponge   \cite{Ponge06}  in the $2$-step case, and then by    Choi and Ponge  \cite{ChoiPonge15}, and also by Van Erp and Yuncken  \cite{vanErpYuncken16},    in the general case.   
  Connes gave a proof of  the Atiyah-Singer theorem using the standard tangent group\-oid considered in Section~\ref{sec-tangent-groupoid} \cite[Chapter 2, Section 5]{Connes94}. See  \cite{vanErp10a} for a proof of an index theorem for contact manifolds  using a similar approach. 
  
As in Section~\ref{sec-tangent-groupoid}, the tangent groupoid has a natural Lie groupoid structure with object space $M{\times}\R$.  
The part of  $\T M$ over  each $\lambda \ne 0$ is  a copy of the pair groupoid of $M$, as before, and it remains to describe the groupoid structure over $\lambda =0$.

If $\mathcal{G}_m$ is the osculating group at $m\in M$, as in Definition \ref{def-osculating-group}, then the isomorphism of  Theorem~\ref{thm-filtered-A-zero-fiber} gives an identification
\begin{equation}
\label{eq-zero-fiber-identifications}
\T^G M \vert _{(m,0)} \cong  \left ( \mathcal G_m{\times} \mathcal G_m \right ) / \mathcal G_m \cong \mathcal G_m .
\end{equation}
Here $\mathcal{G}_m$ is embedded diagonally as a subgroup of $\mathcal G_m{\times} \mathcal G_m $, and the second isomorphism is induced from $(g_1,g_2) \mapsto g_1g_2{}^{-1}$.
\begin{proposition}
\label{prop-tangent-group-structure}
The multiplication on the fiber of $\T^G M$ over $(m,0)$ that is induced from the groupoid structure on $\T^G M$ is the same as the group multiplication operation that is  induced from the identification \eqref{eq-zero-fiber-identifications}.
\end{proposition}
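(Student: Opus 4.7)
The approach is to unwind the groupoid structure map on $\T^G M$ using the functoriality of the deformation space construction, exactly as in Section~\ref{sec-tangent-groupoid}. Recall that the composition law on $\T M$ for an ordinary manifold is induced by the projection $\pi_{13}\colon M{\times}M{\times}M \to M{\times}M$ onto the first and third factors, together with the identification of $\T M^{(2)}$ with the deformation space for the diagonal inclusion $M \hookrightarrow M{\times}M{\times}M$. The same description holds verbatim for $\T^G M$, since both sides of Definition~\ref{def-tangent-groupoid-filtered} are defined by spectra of Rees algebras and the functoriality statement from Section~\ref{sec-tangent-groupoid} is entirely algebraic.

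First, I would work out the $\lambda{=}0$ fiber of $\T^G M^{(2)}$ over $m$. Equipping $M^{k}$ with the product Lie filtration $G^{p}{\times}\cdots{\times} G^{p}$, the osculating group at $m$ is $\mathcal G_m^{k}$, and the osculating group of the diagonal $M \hookrightarrow M^{k}$ is the diagonal subgroup $\mathcal G_m^{\Delta} \subseteq \mathcal G_m^{k}$ (these are immediate from Definition~\ref{def-lie-algebra} and Theorem~\ref{thm-filtrations-and-unipotent-groups}). Applying Theorem~\ref{thm-filtered-A-zero-fiber} twice, I obtain
\[
\T^G M\,\big|_{(m,0)} \;\cong\; \mathcal G_m^{2}\big/\mathcal G_m^{\Delta} , \qquad \T^G M^{(2)}\,\big|_{(m,0)} \;\cong\; \mathcal G_m^{3}\big/\mathcal G_m^{\Delta} ,
\]
and the map induced by $\pi_{13}$ on zero fibers is the one coming from the group homomorphism $(g_1,g_2,g_3)\mapsto (g_1,g_3)$, which descends to quotients because $\pi_{13}$ restricted to the diagonal is the identity.

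Second, I would compute by the formula $(g_1,g_2)\mapsto g_1 g_2^{-1}$ the identification $\mathcal G_m^{2}/\mathcal G_m^{\Delta}\cong \mathcal G_m$ of \eqref{eq-zero-fiber-identifications} and, in parallel, the identification
\[
\mathcal G_m^{3}\big/\mathcal G_m^{\Delta} \;\stackrel{\cong}{\longrightarrow}\; \mathcal G_m \times \mathcal G_m,\qquad [(g_1,g_2,g_3)] \longmapsto (g_1 g_2^{-1},\, g_2 g_3^{-1}).
\]
The key compatibility check is that the two projections $\T^G M^{(2)}\to \T^G M$ induced by $(g_1,g_2,g_3)\mapsto (g_1,g_2)$ and $(g_2,g_3)$ correspond, respectively, to the first and second factor projections $\mathcal G_m\times\mathcal G_m\to \mathcal G_m$; this is the forward direction of saying that my identification is indeed the identification of composable pairs (all pairs are composable over $(m,0)$ since the source and target both map to $m$). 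This verification is immediate from the formula above.

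Finally, under these identifications the composition map $[(g_1,g_2,g_3)]\mapsto [(g_1,g_3)]$ becomes
\[
(\gamma_1,\gamma_2) \;=\; (g_1 g_2^{-1},\, g_2 g_3^{-1}) \;\longmapsto\; g_1 g_3^{-1} \;=\; (g_1 g_2^{-1})(g_2 g_3^{-1}) \;=\; \gamma_1 \gamma_2 ,
\]
which is exactly the multiplication in $\mathcal G_m$. I expect the only real obstacle to be the bookkeeping in step two—keeping straight the various diagonal embeddings, the direction of the coset identification, and the source/target conventions—so that the product on $(\mathcal G_m{\times}\mathcal G_m)/\mathcal G_m$ coming from the pair groupoid matches the group product on $\mathcal G_m$. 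Once the identifications are arranged consistently, the group law falls out of the projection $\pi_{13}$ with no further computation.
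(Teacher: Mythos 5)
Your proposal is correct, and it reaches the conclusion by a genuinely different final step than the paper. Both arguments rest on the same foundation: the functoriality of the deformation space and the fact (established via \eqref{eq-equivariance-formula}) that the maps induced on the $\lambda{=}0$ fibers by filtration-preserving maps such as $\pi_{13}$, $\pi_{12}$, $\pi_{23}$ are exactly the group-theoretic maps on the unipotent homogeneous spaces of Theorem~\ref{thm-filtered-A-zero-fiber}. Where you diverge is in how the multiplication is then pinned down. The paper avoids all coset bookkeeping: it observes that the groupoid composition, transported to a map $\mathcal G_m \times \mathcal G_m \to \mathcal G_m$ via \eqref{eq-zero-fiber-identifications}, is equivariant for left multiplication on the first factor and right multiplication on the second, and sends $(e,e)$ to $e$; any such map satisfies $\mu(g_1,g_2)=g_1\,\mu(e,e)\,g_2=g_1g_2$. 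You instead compute everything explicitly: identify $\T^G M^{(2)}\vert_{(m,0)}$ with $\mathcal G_m^{3}/\mathcal G_m^{\Delta}$, choose the coordinates $[(g_1,g_2,g_3)]\mapsto(g_1g_2^{-1},\,g_2g_3^{-1})$, check compatibility with the two projections, and read off that $\pi_{13}$ becomes $(\gamma_1,\gamma_2)\mapsto\gamma_1\gamma_2$. Your route is more concrete and makes the role of $\pi_{13}$ completely transparent, at the cost of the bookkeeping you yourself flag (left versus right cosets, source/target conventions); the paper's equivariance argument buys immunity from exactly those sign and ordering pitfalls, but is less explicit about what the composable-pairs space looks like over $(m,0)$. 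Both are valid proofs.
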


To prove the proposition, let us return to the functoriality of the deformation space that was mentioned (for ordinary manifolds) in Section~\ref{sec-tangent-groupoid}.   Suppose given a commutative diagram
\[
\xymatrix@C=40pt{
M_1 \ar[r] \ar[d] & M_2 \ar[d]\\
V_1 \ar[r]_\varphi & V_2
}
\] 
in which the columns are inclusions of filtered manifolds, as in Definition~\ref{def-filtered-submanifold}, and the differentials of the horizontal maps are filtration-preserving on tangent spaces.  There is an induced map on deformation spaces, and in particular on  normal spaces.  Indeed if $\varphi(m_1) = m_2$ then composition with $\varphi$  induces a morphism of algebras
\begin{equation}
\label{eq-induced-map-on-normal-space}
\varphi^* \colon A_{0,m_2} (V_2,M_2) \longrightarrow A_{0,m_1}(V_1,M_1) .
\end{equation}
In addition, the differential of $\varphi$ induces a Lie algebra homomorphism 
\begin{equation}
\label{eq-induced-map-on-lie-algebra}
\varphi_* \colon \mathfrak{h}_{1,m_1} \longrightarrow \mathfrak{h}_{2,m_2}
\end{equation}
and so a group morphism 
\begin{equation}
\label{eq-induced-map-on-lie-group}
\varphi_* \colon \mathcal{H}_{1,m_1} \longrightarrow \mathcal{H}_{2,m_2}.
\end{equation}
The morphisms \eqref{eq-induced-map-on-normal-space} and \eqref{eq-induced-map-on-lie-algebra} are related as follows: if $f\in A_{0,m_2}(V_2,M_2)$, then
\begin{equation}
\label{eq-equivariance-formula}
 \delta_{\xi_1} \varphi^*f = \varphi^*  \delta_{\varphi_*\xi_1} f
 \qquad \forall \xi_1 \in \mathfrak{h}_{m_1}
\end{equation}
(for ordinary manifolds  this is simply   the definition of the differential $\varphi_*$). 

Consider now the induced map on normal spaces 
\[
\varphi _* \colon N^{H_1}_{V_1} M_1\big \vert _{m_1}  \longrightarrow  N^{H_2}_{V_2} M_2\big \vert _{m_2} 
\]
(recall that the normal spaces are the spectra of the  algebras in \eqref{eq-induced-map-on-normal-space}).  Identify the normal spaces with unipotent homogeneous spaces, as in Theorem~\ref{thm-filtered-A-zero-fiber}, to obtain a map
\begin{equation}
\label{eq-equivariance-formula2}
\varphi_* \colon \mathcal{H}_{1,m_1} / \mathcal {G}_{1,m_1} \longrightarrow \mathcal{H}_{2, m_2} / \mathcal {G}_{2,m_2} .
\end{equation}
We find from \eqref{eq-equivariance-formula} that \eqref{eq-equivariance-formula2} is induced from \eqref{eq-induced-map-on-lie-group}.

\begin{proof}[Proof of Proposition~\ref{prop-tangent-group-structure}]
It follows from \eqref{eq-equivariance-formula} that the groupoid operation
\[
\T^G M \vert _{(m,0)}  \times \T^G M \vert _{(m,0)}  \longrightarrow \T^G M \vert _{(m,0)} ,
\]
when viewed as a map 
\[
 \mathcal G_m  \times  \mathcal G_m  \longrightarrow  \mathcal G_m 
 \]
 using \eqref{eq-zero-fiber-identifications}, is equivariant for the left and right multiplication actions of $\mathcal G_m$ (on the left and right factors, respectively, in the case of the left-hand side).  In addition, the groupoid operation maps $(e,e)$ to $e$. So it must be group multiplication.
\end{proof}

 \section{Euler-Like Vector Fields  on  Filtered Manifolds}

\begin{definition}
\label{def-euler-like-filtered}
Let $(M,G)$ be a filtered submanifold of a filtered manifold $(V,H)$.  An \emph{Euler-like vector field} for the embedding of $M$ into $V$ is a vector field $E$ with the property that if $f$ is a smooth function on $V$ that vanishes on $M$  to $H$-order $q$, then 
\[
E(f) = q \cdot f + r
\]
where $r$ is a smooth function that vanishes on $M$  to $H$-order $q{+}1$ or higher.
\end{definition}

\begin{example}
If $m\in M$ and if $\{y_a,z_c\}$ is the local coordinate system  defined near $m\in V$, that was used in the proofs of Theorems \ref{thm-filtered-A-zero} and \ref{thm-def-space-mfld2}, then formula 
\[
E = \sum_c q_c\cdot  z_c \cdot \frac{\partial}{\partial z_c}
\]
defines an Euler-like vector field  near $m$.  A global Euler-like vector field can be assembled from locally defined Euler-like vector fields of this type using a partition of unity.
\end{example}

Our aim is to relate Euler-like vector fields to tubular neighborhood embeddings, as in Theorem~\ref{thm-blm}.  An interesting feature of the filtered manifold case that we are now considering is that it is not immediately clear what the appropriate notion of tubular neighborhood embedding should be (for instance, the normal space $N^H_VM$ is not itself a filtered manifold, so we cannot insist that tubular neighborhood embeddings be isomorphisms of filtered manifolds).  So we shall let the analogue of Theorem~\ref{thm-blm} determine the definition of a tubular neighborhood embedding.

To define the appropriate notion of a tubular neigborhood embedding we shall need to define a ``zero section'' of the normal space, and then examine the vertical tangent bundle for the submersion 
\[
 N^H_VM \longrightarrow M
 \]
 at the zero section.  First, the homomorphism 
 \[
 \begin{gathered} 
 A_0(V,M) \longrightarrow C^\infty (M) \\
 \sum \langle a_q\rangle _q \longmapsto \langle a _0\rangle _0
 \end{gathered}
 \]
 defines an inclusion of $M$ into $N^H_VM$ that will be our zero section.  Next, the  vertical tangent space at a point $m$ in the zero section identifies with the quotient of Lie algebras $\mathfrak h_m / \mathfrak g_m$.   Each of $\mathfrak h_m$ and $\mathfrak g_m$ is a graded Lie algebra, and we shall write
 \[
 \mathfrak h^q_m =  H^q_m/H^{q-1}_m
 \quad
 \text{and}
 \quad 
  \mathfrak g^q_m =   G^q_m/G^{q-1}_m .
 \]

\begin{definition}
\label{def-tubular-nbd-embedding}
Let $(M,G)$ be a filtered submanifold of a filtered manifold $(V,H)$.  A \emph{tubular neighborhood embedding} of $N^H_VM$ into $V$ is a diffeomorphism from a neighborhood of $M\subseteq N^H_VM$ to a neighborhood of $M \subseteq V$ with the following properties:
\begin{enumerate}[\rm (a)]
\item The diffeomorphism is the identity on $M$
\item At each point of $M$ the differential maps the vertical space $\mathfrak h^q_m/\mathfrak g^q_m$ into $H^q_m$, and the composition 
\[
\mathfrak h^q_m/\mathfrak g^q_m\longrightarrow H^q_m \longrightarrow \mathfrak h^q_m/\mathfrak g^q_m 
\]
with the natural projection is the identity.
\end{enumerate}
\end{definition}

The normal space $N^H_VM$  carries a natural vector field, which we shall call the Euler vector field, as follows:

\begin{definition}
The  \emph{Euler vector field} on  $N^H_VM$ is the vector field associated to the smooth derivation of $A_0(V,M)$ given by 
\[
\sum _q \langle a _q\rangle _q \longmapsto \sum _q q\cdot \langle a _q\rangle _q.
\]
\label{def-euler-graded-bundle}
\end{definition}

 \begin{remark}
 The normal space $N^H_VM$ is not naturally a filtered manifold, in general.  But if $M$ is a point, then $N^H_VM$ is simply the unipotent group $\H_v$ and this is a filtered manifold.  In this case, the Euler vector field is Euler-like in the sense of Definition~\ref{def-euler-like-filtered}.
 \end{remark}

The Euler vector field generates a flow $\{ \rho_s\}$ on $N^H_VM$ that is easy to describe in group-theoretic terms.  First, there is a  one-parameter group of Lie algebra automorphisms of the graded  Lie algebra 
\[
\mathfrak h_m = \bigoplus _{q=1}^r H^q_m/H^{q-1}_m
\] that multiplies the degree $q$ summand by $e^{tq}$.  This one-parameter group exponentiates to a one-parameter group of automorphisms of the unipotent group $\H_m$ that maps the subgroup $\G_m$ to itself, and therefore induces a flow $\{ \rho _s\}$ on the homogeneous space $\H_m / \G _m$, as required.

\begin{definition}
Denote by $\boldsymbol{C}$ the vector field on $\N^H_VM$ that generates the flow 
\[
\gamma _s \colon \left \{ 
		\begin{aligned}  (v,\lambda) & \longmapsto (v, e^s\lambda) \\
					(X,0) & \longmapsto (\rho_{-s}X,0)\end{aligned}\right .
\]
\end{definition}

\begin{lemma}
\label{lemma-T-field2}
If $E$ is an  Euler-like vector field for the inclusion of $M$ into $V$, then the vector field
\[
\boldsymbol{T} =  {\lambda}^{-1}   E + \frac{\partial} {\partial \lambda}
\]
on the open subset $V {\times} \R^{\times}\subseteq \N^H_V M$ extends to a 
vector field on $\N^H_VM$ with 
\[
 \lambda \cdot \boldsymbol{T} = \boldsymbol{C}+ \boldsymbol{E} ,
\]
where $\boldsymbol{E}$ smoothly extends the $\lambda$-independent vector field on $V{\times}\R^{\times}$ that is defined by $E$. \qed
\end{lemma}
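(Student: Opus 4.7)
The plan is to copy the proof of Lemma~\ref{lemma-T-field} essentially verbatim, since the definition of Euler-like vector field in the filtered setting (Definition~\ref{def-euler-like-filtered}) is formulated precisely in terms of the filtration by $H$-order of vanishing that is used to construct $A(V,M)$ in the filtered setting. Concretely, I would first define two derivations of $A(V,M)$:
\[
\boldsymbol{E}_{\mathrm{alg}}\colon \sum a_q t^{-q} \longmapsto \sum E(a_q)\, t^{-q},
\qquad
\boldsymbol{T}_{\mathrm{alg}}\colon \sum a_q t^{-q} \longmapsto \sum \bigl(E(a_q)-q\,a_q\bigr)\, t^{-(q+1)}.
\]
To check that these are well-defined as derivations of $A(V,M)$, I would verify that the Laurent polynomials on the right still lie in $A(V,M)$. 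If $a_q$ vanishes to $H$-order $q$ on $M$, then by Definition~\ref{def-euler-like-filtered} we have $E(a_q) = q\cdot a_q + r$ with $r$ vanishing to $H$-order $q{+}1$; in particular $E(a_q)$ vanishes to $H$-order $q$, which is exactly what is needed for $\boldsymbol{E}_{\mathrm{alg}}$. For $\boldsymbol{T}_{\mathrm{alg}}$, the combination $E(a_q)-q\,a_q = r$ vanishes to $H$-order $q{+}1$, which is exactly what is needed for the coefficient of $t^{-(q+1)}$. The Leibniz rule for both maps follows from the Leibniz rule for $E$ together with the identity that $q+p$ on $t^{-(q+p)}$ matches the contributions from the two factors.

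Next I would check that these derivations are compatible, in the sense of Definition~\ref{def-smooth-X}, with the vector fields $\boldsymbol{E}$ and $\boldsymbol{T}$ respectively on the open subset $V\times\R^{\times}\subseteq \N^H_VM$. On this open set the character-theoretic picture is transparent: a character over $\lambda\ne 0$ evaluates $\sum a_q t^{-q}$ at the point $(v,\lambda)$ as $\sum a_q(v)\lambda^{-q}$, so $\boldsymbol{E}_{\mathrm{alg}}$ is compatible with the vector field $\boldsymbol{E}$ (the $\lambda$-independent copy of $E$), and a direct computation of $\lambda^{-1}E + \partial/\partial\lambda$ applied to $\sum a_q(v)\lambda^{-q}$ gives the value of $\boldsymbol{T}_{\mathrm{alg}}$ in terms of characters, confirming compatibility with $\boldsymbol{T}$.

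Then I would invoke Lemma~\ref{lemma-smooth-extension2}: since $V\times\R^{\times}$ has dense interior in $\N^H_VM$, smoothness over this open set implies smoothness over the whole spectrum. Lemma~\ref{lemma-smooth-extension1} then produces the unique smooth vector fields $\boldsymbol{E}$ and $\boldsymbol{T}$ on $\N^H_VM$ that are compatible with $\boldsymbol{E}_{\mathrm{alg}}$ and $\boldsymbol{T}_{\mathrm{alg}}$. Finally, the identity $\lambda\cdot\boldsymbol{T} = \boldsymbol{C} + \boldsymbol{E}$ can be checked either by direct comparison of the three underlying derivations on $A(V,M)$ (recalling that $\boldsymbol{C}$ corresponds to the derivation $\sum a_q t^{-q}\mapsto \sum (-q)\,a_q t^{-q}$, which is immediate from the flow defining $\boldsymbol{C}$) or by checking the identity of vector fields on the dense open subset $V\times\R^{\times}$ and using Lemma~\ref{lemma-mfld-characterization} for uniqueness.

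The only step requiring real attention is the well-definedness of $\boldsymbol{T}_{\mathrm{alg}}$, and this is precisely where the definition of Euler-like vector field was designed to fit: the Euler-like condition is exactly the statement that $E(a)-q\cdot a$ vanishes one $H$-order higher than $a$ itself, which is what is needed to land $t^{-(q+1)}$-coefficients in the correct ideal. No genuinely new obstacle appears compared to the smooth-manifold case, because all the subtle input from filtered-manifold theory has already been absorbed into the construction of $A(V,M)$, the normal space $N^H_VM$, and Theorem~\ref{thm-def-space-mfld2}.
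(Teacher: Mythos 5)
Your proposal is correct and follows essentially the same route as the paper, which states this lemma with the proof omitted precisely because it is the verbatim repetition of the proof of Lemma~\ref{lemma-T-field}: define the derivation $\sum a_q t^{-q}\mapsto\sum(E(a_q)-q\,a_q)t^{-(q+1)}$ of $A(V,M)$, observe that well-definedness is exactly the Euler-like condition in Definition~\ref{def-euler-like-filtered}, and extend via Lemmas~\ref{lemma-smooth-extension1} and~\ref{lemma-smooth-extension2}. You correctly identify the one point needing attention (that the coefficient of $t^{-(q+1)}$ lands in $I_{q+1}(V,M)$), so nothing is missing.
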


Repeating the argument from Section~\ref{sec-vector-fields-deformation-space} we find that: 
 
\begin{theorem}
\label{thm-filtered-embeddings}
Let $(M,G)$ be a filtered submanifold of a filtered manifold $(V,H)$.  
The correspondence that associates to each tubular neighborhood embedding the associated Euler-like vector field on $V$ is bijection from germs of tubular neighborhood embeddings to germs of Euler-like vector fields. \qed
\end{theorem}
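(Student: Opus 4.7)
The plan is to mirror Section~\ref{sec-vector-fields-deformation-space} verbatim. Lemma~\ref{lemma-T-field2} already supplies the translation field $\boldsymbol{T}$ on $\N^H_V M$ satisfying $\lambda\boldsymbol{T} = \boldsymbol{C}+\boldsymbol{E}$, and the bracket identities $[\boldsymbol{T},\boldsymbol{C}]=\boldsymbol{T}$, $[\boldsymbol{T},\boldsymbol{E}]=0$, $[\boldsymbol{C},\boldsymbol{E}]=0$ extend from the dense open set $V\times\R^\times$ by smoothness exactly as in \eqref{eq-ECT-relations}. Given an Euler-like $E$, I would let $\{\tau_s\}$ be the local flow of $\boldsymbol{T}$ and set $\tau_s(X_m,0) = (\varphi_s(X_m),s)$. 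As in the ordinary case, applying the Euler-like condition to products $b\cdot z_c$ (where $b$ is pulled back from $M$ and $z_c$ is a normal coordinate of weight $q_c$) forces $E|_M = 0$, so that $\boldsymbol{T}|_{M\times\R} = \partial/\partial\lambda$ and $\varphi_s$ fixes $M$ pointwise. The identity $[\boldsymbol{T},\boldsymbol{C}]=\boldsymbol{T}$ yields the conjugation relation $\varphi_{e^t s}(X_m) = \varphi_s(\rho_t X_m)$, where $\{\rho_t\}$ is the graded-dilation flow of the Euler vector field on $N^H_V M$ from Definition~\ref{def-euler-graded-bundle}. Exactly as in Lemma~\ref{lemma-integrated-bracket-relation}, this permits the assembly of a global tubular map $\Phi$ on a neighborhood of the zero section, and $[\boldsymbol{T},\boldsymbol{E}]=0$ guarantees that $\Phi$ transports the Euler vector field on $N^H_V M$ to the germ of $E$.

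The technical heart is the verification of Definition~\ref{def-tubular-nbd-embedding}(b) for $\varphi_1$, which is the filtered analogue of Lemma~\ref{lem-normal-derivative-of-phi}. For $f \in I_q(V,M)$, the Laurent monomial $ft^{-q}\in A(V,M)$ defines a smooth function $\boldsymbol{f}_q$ on $\N^H_V M$ whose restriction to the zero fiber is the polynomial $X_m\mapsto X_m(\langle f\rangle_q)$ supplied by Theorem~\ref{thm-filtered-A-zero-fiber}. A direct computation using the Euler-like hypothesis $E(f) = qf+h$ with $h\in I_{q+1}(V,M)$ gives
\[
\boldsymbol{T}(\boldsymbol{f}_q) \;=\; h\,t^{-q-1},
\]
which is again a smooth function on $\N^H_V M$. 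Combining this with the scaling $d\varphi_s|_m(\xi) = s^q\,d\varphi_1|_m(\xi)$ for $\xi\in\mathfrak h^q_m/\mathfrak g^q_m$ (derived from the conjugation relation above) and with the pairing of $\mathfrak h^q_m/\mathfrak g^q_m$ with $I_q(V,M)/I_{q+1}(V,M)$ at $m$ supplied by Lemma~\ref{lemma-rank-r-derivation}, one obtains, for all $f\in I_q(V,M)$ and $\xi\in\mathfrak h^q_m/\mathfrak g^q_m$, the identity
\[
\bigl\langle df|_m,\, d\varphi_1|_m(\xi)\bigr\rangle \;=\; \xi\bigl(\langle f\rangle_q\bigr).
\]
Since $df|_m$ annihilates $H^{q-1}_m + G^q_m$ and thereby represents the canonical functional on $\mathfrak h^q_m/\mathfrak g^q_m$ dual to $\langle f\rangle_q$, this identity encodes both assertions of (b): $d\varphi_1|_m(\mathfrak h^q_m/\mathfrak g^q_m)\subseteq H^q_m$, and the induced map on $\mathfrak h^q_m/\mathfrak g^q_m$ is the identity. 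This degree-by-degree bookkeeping, ensuring that lower-weight pieces do not contaminate the degree-$q$ identification, is the main obstacle of the proof.

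For uniqueness I would follow the strategy of Lemma~\ref{lem-commutes-with-scalars}. If two tubular neighborhood embeddings share the same associated Euler-like vector field, their composition $\Psi$ is a self-diffeomorphism near the zero section of $N^H_V M$ that fixes $M$, commutes with the dilation flow $\{\rho_s\}$, and induces the identity on each $\mathfrak h^q_m/\mathfrak g^q_m$. Working in privileged coordinates (Lemma~\ref{lemma-local-coords1}) $\{z_c\}$ near a point of $M$, in which the Euler vector field becomes $\sum_c q_c z_c\,\partial/\partial z_c$, a graded Taylor estimate with the weighted norm $\max_c|z_c|^{1/q_c}$ in place of the Euclidean norm, combined with the $\rho_s$-equivariance $\Psi\circ\rho_{-t} = \rho_{-t}\circ\Psi$ for $t\ge 0$, forces $\Psi = \operatorname{id}$ on some neighborhood of $M$.
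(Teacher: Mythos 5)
Your architecture is the same as the paper's (the paper itself only says ``repeating the argument from Section~\ref{sec-vector-fields-deformation-space}''), and the construction of $\boldsymbol{T}$, the bracket relations, the vanishing of $E$ along $M$, and the conjugation relation $\varphi_{e^ts}=\varphi_s\circ\rho_t$ are all fine. But the step you call the technical heart does not close. The pairing identity $\langle df|_m,\,d\varphi_1|_m(\xi)\rangle=\xi(\langle f\rangle_q)$ is only available for $f$ vanishing \emph{on $M$}, and every such covector $df|_m$ annihilates all of $T_mM$, not merely $\mathfrak g^q_m$. Hence the identity pins down $d\varphi_1|_m(\xi)$ only modulo $T_mM$ and yields $d\varphi_1|_m(\mathfrak h^q_m/\mathfrak g^q_m)\subseteq H^q_m+T_mM$, not the containment in $H^q_m$ demanded by Definition~\ref{def-tubular-nbd-embedding}(b). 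The stronger containment is genuinely false: take $V=\R^3$ with coordinates $(z,y_1,y_2)$, $H^1=\operatorname{span}(\partial_z,\partial_{y_1})$, $H^2=TV$, and $M=\{z=0\}$. Then $I_q(V,M)$ consists of functions vanishing to order $q$ in $z$, so $E=z\partial_z+z\partial_{y_2}$ is Euler-like; the flow construction gives $\varphi_1(\xi)=(\xi,\,y_1^0,\,y_2^0+\xi)$, whose differential sends $\mathfrak h^1_m/\mathfrak g^1_m$ to $\R\,(\partial_z+\partial_{y_2})\not\subseteq H^1_m$. So with Definition~\ref{def-tubular-nbd-embedding} read literally the constructed map is not a tubular neighborhood embedding at all; condition (b) has to be interpreted modulo $T_mM$ (image in $H^q_m+T_mM$, projection taken accordingly), and your argument then proves exactly that weaker statement.

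The uniqueness half also breaks for $r\ge 2$: $\rho_s$-equivariance together with identity linearization does \emph{not} force $\Psi=\operatorname{id}$. On a fiber with graded coordinates $\xi_1$ (weight $1$) and $\xi_2$ (weight $2$), the map $\Psi(\xi_1,\xi_2)=(\xi_1,\,\xi_2+\xi_1^2)$ commutes with the dilations, fixes the origin, has derivative $\operatorname{id}$ there, and carries the Euler vector field $\xi_1\partial_{\xi_1}+2\xi_2\partial_{\xi_2}$ to itself; composing any tubular neighborhood embedding with it produces a second embedding, satisfying (a) and (b), with the same associated Euler-like vector field. Your weighted Taylor estimate only shows that $\Psi$ is fiberwise a weighted-homogeneous polynomial map, and such maps with identity linearization form a nontrivial group. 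To recover injectivity one needs a stronger normalization than the pointwise differential condition in (b) --- for instance that the map induced by the embedding on the osculating homogeneous spaces $\H_m/\G_m$ (the graded analogue of condition (1.2)) is the identity --- and the uniqueness argument must then be rerun with that hypothesis. As written, neither half of your proof establishes the bijection.
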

 
\begin{remark}
\label{remark-carnot2}
In the case where $M$ is a point, the inverse 
\[
V \longrightarrow \H_m
\]
of the tubular neighborhood embedding corresponds to a system of Carnot coordinates, as in \cite[Section 7]{ChoiPonge15} and Remark~\ref{remark-carnot1}.
\end{remark}

\bibliography{Refs}
\bibliographystyle{alpha}

\end{document}